 %
\documentclass[11pt, twoside, leqno]{smfart}

\usepackage{amssymb}

\usepackage{hyperref}

\pagestyle{myheadings}

\newtheorem{thm}{\indent Th\' eor\`eme}[section]
\newtheorem{cor}[thm]{\indent Corollaire}
\newtheorem{lem}[thm]{\indent Lemme}
\newtheorem{prop}[thm]{\indent Proposition}

\theoremstyle{definition}

\newtheorem{rem}[thm]{Remarque}

\numberwithin{equation}{section}

\frenchspacing

\textwidth=14cm
\textheight=22cm
\parindent=16pt
\oddsidemargin=0.5cm
\evensidemargin=0.5cm
\topmargin=-0.5cm


\def\boxit#1#2{\setbox1=\hbox{\kern#1{#2}\kern#1}%
\dimen1=\ht1 \advance\dimen1 by #1 \dimen2=\dp1 \advance\dimen2 by #1
\setbox1=\hbox{\vrule height\dimen1 depth\dimen2\box1\vrule}%
\setbox1=\vbox{\hrule\box1\hrule}%
\advance\dimen1 by .4pt \ht1=\dimen1
\advance\dimen2 by .4pt \dp1=\dimen2 \box1\relax}

\def\hfl#1#2{\smash{\mathop{\hbox to 12 mm{\rightarrowfill}}
\limits^{\scriptstyle#1}_{\scriptstyle#2}}}

 \def\hlfl#1#2{\smash{\mathop{\hbox to 12 mm{\leftarrowfill}}
\limits^{\scriptstyle#1}_{\scriptstyle#2}}}

\def\phfl#1#2{\smash{\mathop{\hbox to 8 mm{\rightarrowfill}}
\limits^{\scriptstyle#1}_{\scriptstyle#2}}}

 \def\phlfl#1#2{\smash{\mathop{\hbox to 8 mm{\leftarrowfill}}
\limits^{\scriptstyle#1}_{\scriptstyle#2}}}

\def\atop#1#2{
\genfrac{}{}{0pt} {} 
{#1} 
{#2}}

\def\cqfd{\unskip\kern 6pt\penalty 500
\raise -2pt\hbox{\vrule\vbox to 10pt{\hrule width 4pt
\vfill\hrule}\vrule}\par}

\def\house#1{\setbox1=\hbox{$\,#1\,$}%
\dimen1=\ht1 \advance\dimen1 by 2pt \dimen2=\dp1 \advance\dimen2 by 2pt
\setbox1=\hbox{\vrule height\dimen1 depth\dimen2\box1\vrule}%
\setbox1=\vbox{\hrule\box1}%
\advance\dimen1 by .4pt \ht1=\dimen1
\advance\dimen2 by .4pt \dp1=\dimen2 \box1\relax}

\def\virgule{\raise 2pt \hbox{$,$}}

\def\Q{\mathbb{Q}}
\def\R{\mathbb{R}}

\def\Z{\mathbb{Z}}

\def\rmd{{\mathrm d}}
\def\rmm{{\mathrm m}}
\def\rmL{{\mathrm L}}

\def\calO{{\mathcal{O}}}
 
 \def\GL{{\mathrm{GL}}}
\def\calA{\mathcal{A}}

\def\og{\leavevmode\raise.3ex\hbox{$\scriptscriptstyle
\langle\!\langle$}}

\def\fg{\leavevmode\raise.3ex\hbox{$\scriptscriptstyle
\,\rangle\!\rangle$}}

 \begin{document}

\begin{center}
\Large\bf 
Sur la repr\'esentation des entiers 
\\
par les formes cyclotomiques de grand degr\' e  

\bigskip

{\sc \small \'Etienne Fouvry \&\ Michel Waldschmidt}

\author{{\sc \' Etienne Fouvry}\\
\rm
Institut de Math\'ematique d'Orsay \\
 Universit\'{e} Paris--Sud \\
CNRS, Universit\'{e} Paris--Saclay\\
 F--91405 ORSAY, \sc France\\
 \rm
 E-mail: \url{etienne.fouvry@u-psud.fr}
}

\author{
 {\sc Michel Waldschmidt }\\
 \rm
 Sorbonne Universit\'e \\ 
 Institut Math\'ematique de Jussieu IMJ-PRG \\
 F--75005 Paris, \sc France \\
 \rm
 E-mail: 
 \url{michel.waldschmidt@imj-prg.fr}}

\end{center}
Pour chaque entier $d\ge 4$, nous \' etudions la suite des entiers positifs repr\' esent\' es par une des formes binaires cyclotomiques $\Phi_n(X,Y)$ pour les $n$ positifs tels que $\varphi(n)\ge d$. Le cas $d=2$ a 
\' et\' e 
 \' etudi\' e dans notre pr\' ec\'edent texte \cite{FLW}. Notre d\'emonstration repose sur une variante d'un \' enonc\' e de \cite{SX} concernant les valeurs communes prises par deux formes binaires de m\^eme degr\' e et de discriminants non nuls. 

Toutes les constantes sont effectivement calculables.

\section{Introduction}\label{section:introd}
Rappelons (\cite{FLW}) que la suite $(\Phi_n (X,Y))_{n\geq 1}$ des {\it formes cyclotomiques } est d\' efinie par la formule de r\' ecurrence
$$
X^n-Y^n =\prod_{k \mid n} \Phi_k (X,Y).
$$
Le polyn\^ome $\Phi_n (X,Y)$ est homog\`ene de degr\' e $\varphi (n)$ ($\varphi $ fonction indicatrice d'Euler) et il est reli\' e au polyn\^ome cyclotomique $\phi_n (t) \in \mathbb Z [t]$ par la formule

$$
\Phi_n (X,Y) =Y^{\varphi (n)} \phi_n (X/Y).
$$
Puisque, pour $n\geq 3$, le polyn\^ome $\phi_n (t)$ n'a aucun z\' ero r\' eel, on a donc l'in\' egalit\' e 
$$
\Phi_n (x,y) \gg \max (\vert x\vert ^{\varphi (n)},\vert y \vert^{\varphi (n)}),
$$
uniform\' ement sur $x$ et $y$ r\' eels. 

Pour $N\geq 2$ et $d$ entier pair, on d\' esigne par $\mathcal A_d (N)$ le cardinal de l'ensemble des entiers $1\leq m \leq N$ tels qu'il existe un entier $n$ et des entiers $(x,y)$ v\' erifiant les trois conditions
\begin{equation}\label{3cond}
\begin{cases}
\varphi (n) \geq d,\\
\Phi_n (x,y) =m,\\
\max(\vert x\vert, \vert y \vert ) \geq 2.
\end{cases}
\end{equation}
On s'int\'eresse au comportement asymptotique de $\mathcal A_d (N)$ pour $d$ fix\' e et $N$ tendant vers l'infini. Il est alors sage d'introduire la derni\`ere condition de \eqref{3cond} puisque pour tout $p$ premier on a $\Phi_p (1,1)=p$ et le cardinal des $p\leq N$
masquerait le terme principal de l'estimation qui sera donn\'ee en \eqref{central}. Par convention, nous r\' eservons la lettre $p$
aux nombres premiers. Enfin, si $n$ est un entier impair,
on a l'\' egalit\' e
\begin{equation*} 
\Phi _{2n} (X,Y) = \Phi_n (X,-Y).
\end{equation*}
On peut ainsi ajouter, aux conditions de \eqref{3cond}, la condition de congruence
\begin{equation}\label{nnequiv}
n\not\equiv 2 \bmod 4,
\end{equation}
sans modifier l'\' etude de $\mathcal A_d (N)$.

 Appelons {\it totient} toute valeur prise par la fonction $\varphi$. La suite 
croissante des totients est ainsi
$$
\mathfrak T := \{1,\, 2,\, 4,\, 6,\, 8,\, 10, \, 12,\, 16, \, 18, \, 20, \, 22,\, 24,\, 28,\, 30,\, \dots \}.
$$
Cette suite contient la suite des $p-1$ mais reste myst\' erieuse \`a de nombreux points de vue (on se reportera avec profit aux articles de Ford \cite{Ford1} et \cite{Ford2} traitant, entre autres choses,
de la fonction de comptage de la suite $\mathfrak T$ et du nombre de solutions \`a l'\' equation $ \varphi (n)=d$
). Il est naturel de restreindre l'\' etude de $\mathcal A_d (N)$ au cas o\`u $d$ est un totient pair.
L'\'etude de $\mathcal A_2 (N)$ a \'et\' e trait\'ee dans \cite[Th\' eor\`eme 1.3]{FLW} o\`u il est prouv\' e qu'il existe deux constantes 
 $C_2= 1, 403132\dots$ et $C'_2=0,302316\dots $ telles que, uniform\' ement pour $N \geq 2$, on a l'\' egalit\' e

 \begin{equation}\label{A2=}
 \calA_2(N)=C_2 \frac{N}{(\log N)^{\frac 12}}- C'_2\frac{N}{(\log N)^{\frac 34 } } + O\left(
 \frac{N}{(\log N)^{\frac 32}}
 \right).
\end{equation}
 Les constantes $C_2 $ et $C'_2 $ se d\' efinissent au moyen des valeurs, au point $s=1$, de certaines fonctions de Dirichlet $L(s, \chi)$ o\`u $\chi$ est le caract\`ere de Kronecker attach\' e aux corps quadratiques de discriminants $-4$, $-3$ et $12$.

 Pour $d$ totient $\geq 4$, les outils de \cite{FLW} conduisent \`a la majoration 
 \begin{equation}\label{Ad<<}
 \mathcal A_d (N) = O \bigl( N^\frac 2d (\log N)^{1,161}\bigr),
 \end{equation}
 (voir corollaire \ref{Corollaire:FLW} et sa preuve ci--dessous).
 
 Par rapport \`a \cite{FLW}, le pr\' esent travail innove en injectant r\'esultats et m\' ethodes de \cite{SX} que nous d\' ecrirons au \S \ref{StewartYaonew}. Contentons--nous pour l'instant de donner notre r\' esultat principal qui am\' eliore notablement \eqref{Ad<<}. Pour son \' enonc\' e, nous introduisons les notations suivantes :
 
 \noindent $\bullet$ si $d$ est un totient, on note $d^\dag$ le successeur imm\' ediat de $d$ dans la suite $\mathfrak T$.

 \noindent $\bullet $ pour $d $ entier $\geq 3$, on pose 
\begin{equation}\label{defeta}
\eta_d = 
\begin{cases}
&
 2/9 + 73/( 108 \sqrt 3) \text{ si } d=3, \\
&
(1/2 +9/(4 \sqrt d))/d\text{ si } 4\leq d \leq 20,\\ 
&
1/d \text{ pour } d\geq 21. 
\end{cases}
\end{equation}
 On prouvera donc le 
 \begin{thm}\label{thmcentral} Soit $d\geq 4$ un totient. Alors, il existe une constante $C_d >0$, telle 
 que, pour tout $\varepsilon >0$ et uniform\' ement pour $N\geq 2$, on a l'\' egalit\' e 
 \begin{equation}\label{central}
 \mathcal A_d (N)
 = C_d N^\frac{2}{d} + O (N^\frac{2}{d^\dag}) + O_\varepsilon( N^{\eta_d +\varepsilon}).
 \end{equation}
 
 \end{thm}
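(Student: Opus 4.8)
The plan is to isolate the contribution of the cyclotomic forms of \emph{minimal} admissible degree. Since $\Phi_n(x,y) \gg \max(|x|,|y|)^{\varphi(n)}$ and one imposes $\max(|x|,|y|) \geq 2$, any representation $\Phi_n(x,y) = m \leq N$ forces $2^{\varphi(n)} \ll N$, hence $\varphi(n) \ll \log N$: only finitely many values of $\varphi(n)$, and for each of them only finitely many $n$ subject to \eqref{nnequiv}, actually occur. Writing
$$
S_n = \{\, 1 \leq m \leq N : m = \Phi_n(x,y) \text{ for some } (x,y) \text{ with } \max(|x|,|y|)\geq 2\,\},
$$
we have $\calA_d(N) = \Card\bigl(\bigcup_{\varphi(n)\geq d} S_n\bigr)$, and I expect the main term to come entirely from the $n$ with $\varphi(n) = d$.

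First I would treat a single form. For fixed $n$ with $\varphi(n) = d \geq 4$, the form $\Phi_n$ is separable, so $\Aut(\Phi_n) \subset \GL_2(\Z)$ is finite, of order $w_n$ say; moreover $\Phi_n$ has no real root, so $\{(x,y)\in\R^2 : \Phi_n(x,y)\leq 1\}$ is a bounded domain of finite area $A_n$. A lattice-point count in the homothetic region $\{\Phi_n \leq N\}$ then gives $A_n N^{2/d} + O(N^{1/d})$ points, and to pass from representations to distinct values one divides by $w_n$, the integers with strictly more than $w_n$ essentially distinct representations being sparse and absorbed into the error by the single-form estimate of \cite{SX}. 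This yields $\Card(S_n) = C_{\Phi_n} N^{2/d} + (\text{admissible error})$ with $C_{\Phi_n} = A_n/w_n > 0$, and I set $C_d = \sum_{\varphi(n)=d} C_{\Phi_n} > 0$.

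It then remains to pass from the sum of these single-form counts to the cardinality of the union. By inclusion--exclusion, the forms with $\varphi(n) \geq d^\dag$ contribute $\sum_{\varphi(n) \geq d^\dag}\Card(S_n) \ll N^{2/d^\dag}$, the degree $\varphi(n)=d^\dag$ dominating since for larger degrees the exponent $2/k$ drops by a fixed amount, which outweighs both the $O(\log N)$ range of $k$ and the $k^{o(1)}$ multiplicity of totients; this is the first error term. The remaining defect is governed by the pairwise intersections $S_n \cap S_{n'}$ with $n \neq n'$ and $\varphi(n)=\varphi(n')=d$, which count the integers $m \leq N$ represented simultaneously by two \emph{distinct} cyclotomic forms of the same degree $d$ and nonzero discriminant. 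Here I invoke the variant of the Stewart--Xiao theorem described in \S\ref{StewartYaonew}, bounding the number of such common values by $O_\varepsilon(N^{\eta_d + \varepsilon})$ with $\eta_d$ as in \eqref{defeta}; the higher-order intersections are smaller still. Collecting the three contributions yields \eqref{central}.

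The main obstacle is precisely this overlap estimate: controlling the common values of two distinct forms of equal degree is a Thue-type Diophantine problem, and it is the sole source of the exponent $\eta_d$. By contrast the single-form asymptotic and the reduction to $\varphi(n)=d$ are comparatively routine; the only delicate point there is the book-keeping needed to check that every secondary error, namely the lattice remainder $O(N^{1/d})$, the excess-representation term, and the higher-degree tail, is dominated by $O(N^{2/d^\dag}) + O_\varepsilon(N^{\eta_d+\varepsilon})$.
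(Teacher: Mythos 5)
Your proposal follows essentially the same route as the paper: the single-form asymptotic from Stewart--Xiao with $W_{\Phi_n}=w_n$, the tail $\varphi(n)\ge d^\dag$ absorbed into $O(N^{2/d^\dag})$ (you do this by summing directly over the possible degrees, the paper by bootstrapping its Corollaire \ref{Corollaire:FLW}), and the pairwise overlaps of the degree-$d$ forms controlled by the common-values bound $O_\varepsilon(N^{\eta_d+\varepsilon})$ of the th\'eor\`eme \ref{thmF1=F2} and its corollaire \ref{F1=F2}. The one hypothesis you pass over is that this last result requires the two forms to be \emph{non isomorphes}, not merely distinct (isomorphic forms represent the same integers, so no such overlap bound could hold for them); for $n_1\neq n_2$ with $n_i\not\equiv 2\bmod 4$ and $\varphi(n_1)=\varphi(n_2)$ this is precisely the proposition \ref{prop:formescyclotomiquesisomorphes}, proved via the torsion subgroup of $\Q(\zeta_n)$.
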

 
 \begin{rem} 
La formule \eqref{central} est d'autant plus pr\' ecise que $d^\dag -d$ est grand. Ainsi, dans le cas particulier o\`u $ d\geq 6$, la minoration triviale
 $$
 d^\dag \geq d+2,
 $$
r\' eduit la formule \eqref{central} en sa forme plus grossi\`ere
 $$
 \mathcal A_d (N)= C_d N^\frac 2d
 +O( N^\frac 2{d+2}).
 $$
 \end{rem}
 
 \begin{rem} Le th\' eor\`eme \ref{thmcentral} suppose $d\ge 4$. La formule \eqref{A2=} correspond donc au cas $d=2$. Mais, par la pr\'esence au d\' enominateur du facteur $(\log N)^\frac 12$, elle diff\`ere notablement de \eqref{central}. Cette diff\'erence s'explique comme suit. Il y a trois formes cyclotomiques de degr\' e $2$. Ce sont les trois formes quadratiques binaires
 \begin{equation}\label{Phi3Phi4Phi6}
 \Phi_3 (X,Y) = X^2+XY +Y^2, \, \Phi_4 (X,Y) =X^2+Y^2, \text{ et } \Phi_6 (X,Y) = X^2-XY+Y^2.
 \end{equation}
 Puisque $\Phi_6 (X,-Y) = \Phi_3 (X,Y)$ les formes $\Phi_6$ et $\Phi_3$ repr\' esentent les m\^emes entiers. Mais les formes $\Phi_3$ et $\Phi_4$ \`a la diff\'erence 
 des formes cyclotomiques de degr\' e au moins $4$, ont un nombre infini d'automorphismes comme d\'efinis au \S \ref{autoforcyclo}. Par exemple on a ${\rm Aut } \Phi_4 = {\rm O}(2, \mathbb Q)$ (le groupe des matrices orthogonales $2\times 2$ \`a coefficients dans $\Q$).
 \end{rem}
 L'objet des th\' eor\`emes \ref{thmcentral2} et \ref{thmcentral3} est de compl\' eter la formule \eqref{central}. Nous pr\' ecisons d'abord la constante $C_d$.
 
 \begin{thm} \label{thmcentral2}Soit $d\geq 4$ un totient. La constante $C_d$ de la formule \eqref{central} v\' erifie l'\' egalit\' e
\begin{equation}\label{defCd}
 C_d= \sum_{\substack{n\not\equiv 2 \bmod 4 \\ \varphi (n) = d}} \, w_n\,A _{\Phi_n} 
\end{equation}
 o\`u 
\begin{equation}\label{Equation:wn}
 w_n: =
 \begin{cases} \frac 14 \text{ si } 4\nmid n,\\
 \frac 18 \text{ si } 4\mid n,
 \end{cases}
\end{equation}
 et
 $$
 A_{\Phi_n}= \iint_{\Phi_n(x,y) \leq 1}{\rm d}x{\rm d}y.
 $$
 \end{thm}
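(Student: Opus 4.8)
The plan is to extract the constant $C_d$ from the proof of Theorem~\ref{thmcentral} by identifying exactly which forms contribute to the main term and then evaluating each contribution. The first step is the remark that only the forms of degree \emph{exactly} $d$ produce a term of size $N^{2/d}$: if $\varphi(n)=d'>d$ then $d'\ge d^\dag$, and the inequality $\Phi_n(x,y)\gg\max(|x|,|y|)^{d'}$ forces $\Phi_n$ to represent at most $O(N^{2/d'})=O(N^{2/d^\dag})$ integers of $[1,N]$, a quantity already contained in the error term of \eqref{central}. Hence the main term is carried by the finitely many $n$ with $\varphi(n)=d$, and thanks to \eqref{nnequiv} we may assume $n\not\equiv 2\bmod 4$.

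The heart of the argument is the count, for a single form $F=\Phi_n$ with $\varphi(n)=d\ge 4$, of the distinct positive integers $\le N$ it represents. Since $F$ is positive definite and homogeneous of degree $d$, the substitution $(x,y)=N^{1/d}(u,v)$ identifies $\{F\le N\}$ with $N^{1/d}\{F\le 1\}$, so the number of integer pairs with $0<F(x,y)\le N$ is $A_{\Phi_n}N^{2/d}+O(N^{1/d})$. To pass from pairs to values one divides by the number of pairs representing a generic value, namely the order of the integer automorphism group $\Aut(F)=\{M\in\GL_2(\Z):F\circ M=F\}$. Here I would compute $\Aut(F)$ explicitly: as $\phi_n$ is self-reciprocal, the map $(x,y)\mapsto(y,x)$ preserves $F$, and together with $-I$ this yields a subgroup of order $4$; when in addition $4\mid n$ the polynomial $\phi_n$ is even (its roots split into pairs $\pm\zeta$), so $(x,y)\mapsto(-x,y)$ is a further automorphism and $\Aut(F)$ is the dihedral group of order $8$. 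Thus $|\Aut(\Phi_n)|=4$ if $4\nmid n$ and $=8$ if $4\mid n$, i.e. $w_n=1/|\Aut(\Phi_n)|$ with $w_n$ as in \eqref{Equation:wn}, and the form represents $w_nA_{\Phi_n}N^{2/d}+\cdots$ integers $\le N$.

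It then remains to recombine the forms. Writing $\mathcal A_d(N)$ as the cardinality of the union over $n$ of the value sets and applying inclusion--exclusion, the leading term is the sum $\bigl(\sum_{n} w_nA_{\Phi_n}\bigr)N^{2/d}$, while every intersection of two value sets is governed by the variant of \cite{SX} announced in \S\ref{StewartYaonew}: two inequivalent forms of equal degree and nonzero discriminant have only $O(N^{\beta})$ common values, with $\beta<2/d$. The constraint $n\not\equiv 2\bmod 4$ is exactly what makes the chosen forms pairwise inequivalent, by removing the identities $\Phi_{2n}(X,Y)=\Phi_n(X,-Y)$; hence all intersections are swallowed by the error terms of \eqref{central}, and comparison with $\mathcal A_d(N)=C_dN^{2/d}+\cdots$ gives \eqref{defCd}.

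The main obstacle is the second step. Establishing that the generic value has \emph{exactly} $|\Aut(\Phi_n)|$ representatives --- equivalently, that the integers admitting more than one $\Aut(\Phi_n)$-orbit of representations form a set of size $O_\varepsilon(N^{\eta_d+\varepsilon})$ (pairs fixed by a nontrivial automorphism lie on finitely many lines and contribute only $O(N^{1/d})$) --- together with the exact determination of $\Aut(\Phi_n)$ for every $n$ with $\varphi(n)\ge 4$, is precisely where the quantitative representation estimates of \cite{SX} are indispensable; everything else is bookkeeping on the error exponents.
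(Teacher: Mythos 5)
Your proposal is correct and follows essentially the same route as the paper: the main term of $\mathcal A_d(N)$ is the sum over $n\not\equiv 2\bmod 4$ with $\varphi(n)=d$ of $R_{\Phi_n}(N)\sim w_nA_{\Phi_n}N^{2/d}$ (the paper cites \cite[Theorem 1.1]{SX} for this asymptotic rather than re-deriving it by the orbit count you sketch, and checks that Stewart--Xiao's $W_{\Phi_n}$ equals $1/|\Aut\Phi_n|=w_n$ precisely because the automorphisms are integral, so all the lattices in \eqref{defWF1}--\eqref{defWF2} are $\Z^2$), while the pairwise intersections are absorbed into the error term via the new two-forms estimate and the higher-degree forms contribute $O(N^{2/d^\dag})$. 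Two sub-claims you assert are actually proved in the paper and are not mere bookkeeping: that $\Aut\Phi_n$ is \emph{no larger} than the dihedral group of order $4$ resp.\ $8$ (shown in \S\ref{autoforcyclo} by observing that an automorphism induces a M\"obius map permuting the primitive $n$-th roots of unity, hence preserving the unit circle, hence of the form $z\mapsto\pm z$ or $\pm1/z$ by rationality), and that the forms $\Phi_n$ with $\varphi(n)=d$ and $n\not\equiv 2\bmod 4$ are pairwise non-isomorphic over $\GL_2(\Q)$ (Proposition \ref{prop:formescyclotomiquesisomorphes}, which rests on the fact that an isomorphism forces $\Q(\zeta_{n_1})=\Q(\zeta_{n_2})$ together with the torsion lemma); neither of these follows from the quantitative estimates of \cite{SX} to which you defer them, but both are true and elementary, so no step of your argument would fail.
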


Voici deux exemples dans lesquels la formule \eqref{defCd} donnant la valeur de la constante $C_d$ se simplifie. 

\begin{enumerate}
\item
Soit $p\ge 5$ un nombre premier de Sophie Germain, c'est-\`a-dire tel que le nombre $\ell=2p+1$ soit premier. 
Alors $\ell$ est l'unique entier $\not\equiv 2 \bmod 4$ tel que $\varphi (\ell) =2p$ et on a l'\'egalit\'e 
$$
C_{2p} = \frac 1 4 A_{\Phi_\ell}.
$$ 
On conjecture qu'il y a une infinit\'e de nombres premiers de Sophie Germain. 

\item
Supposons que $d\ge 4$ est une puissance de $2$, disons $d=2^k$. 

On d\'esigne par $\mathcal M$ l'ensemble des nombres entiers $m\ge 1$ dont le d\'eveloppement binaire $m=2^{a_1}+2^{a_2}+\cdots+2^{a_r}$ est tel que chacun des nombres $F_{a_i}=2^{2^{a_i}}+1$ est premier (nombre premier de Fermat). L'ensemble $\mathcal M$ contient les entiers $1,2,3,\dots,31$; on ne conna\^\i t pas d'autre \'el\'ement de $\mathcal M$. Pour chaque $m\in \mathcal M$ v\'erifiant $m\le k$, on d\'efinit 
$$
\ell_k(m)=2^{k-m+1}F_{a_1}F_{a_2}\cdots F_{a_r},
$$
de sorte que $\varphi(\ell_k(m))=2^k$. Les entiers $n\not \equiv 2 \bmod 4$ tels que $\varphi(n)=d$ sont d'une part $n=2d$, qui est multiple de $4$, d'autre part les $\ell_k(m)$ avec $m< k$, qui sont aussi multiples de $4$, et enfin, si $k\in \mathcal M$, $\ell_k(k)/2$ qui est impair, avec $A_{\Phi_{\ell_k(k)}}=A_{\Phi_{\ell_k(k)/2}}$. Alors 
$$
C_d=
\begin{cases}
\displaystyle
\frac 1 8 A_{\Phi_{2d}} + \frac 1 8 \sum_{\atop{m\in \mathcal M}{m< k}}A_{\Phi_{\ell_k(m)}} &\hbox{si $k\not\in \mathcal M$},
\\
\displaystyle
\frac 1 8 A_{\Phi_{2d}} + \frac 1 8 \sum_{\atop{m\in \mathcal M}{m< k}}A_{\Phi_{\ell_k(m)}} +\frac 1 4
A_{\Phi_{\ell_k(k)}}
 &\hbox{si $k\in \mathcal M$,}
\end{cases}
$$
avec
$$
A_{\Phi_{2d}}=\int_{-\infty}^\infty \frac {\rmd t}{(1+t^d)^{2/d}}= \frac{2}{d}\frac{\Gamma(1/d)^2}{\Gamma(2/d)}
$$
(cf. \S\ref{SS:regionfondamentale} et \cite[Corollaire 1.3 et \S~5]{SX}). 
\end{enumerate}

 \bigskip
Nous montrerons que l'on a 
 $$
 \lim_{n\rightarrow \infty} A_{\Phi_n} =4.
 $$
C'est une cons\'equence de l'\'enonc\'e plus pr\'ecis suivant, concernant le {\it domaine fondamental cyclotomique } $\calO_n$ 
d\'efini par 
$$
\calO_n= \{(x,y)\in\R^2\; \mid \; \Phi_n(x,y)\le 1\}.
 $$

\begin{thm}\label{Thm:ovalecyclotomique}
Soit $\varepsilon>0$. Il existe $n_0=n_0(\varepsilon)$ tel que, pour $n\ge n_0$, le domaine fondamental cyclotomique $\calO_n$ d'indice $n$ contient le carr\'e centr\'e en $O$ de c\^ot\'e $2-n^{-1+\varepsilon}$ et est contenu dans le carr\'e centr\'e en $O$ de c\^ot\'e $2+n^{-1+\varepsilon}$. 
\end{thm}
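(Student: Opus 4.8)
The plan is to pass to logarithms and reduce the two–dimensional statement to a single uniform one–variable estimate. By homogeneity $\Phi_n(tx,ty)=t^{\varphi(n)}\Phi_n(x,y)$ for $t>0$, so $\Phi_n$ is strictly increasing along every ray issued from $O$; it therefore suffices to control it on the boundaries of the two squares. Using the symmetries $\Phi_n(x,y)=\Phi_n(y,x)$ (palindromy of $\phi_n$) and $\Phi_n(-x,-y)=\Phi_n(x,y)$ (here $\varphi(n)$ is even), I may assume $R:=\max(|x|,|y|)=x>0$ and set $u=y/x\in[-1,1]$, so that
\[
\log\Phi_n(x,y)=\varphi(n)\log R+\log|\phi_n(u)|.
\]
Everything then reduces to the uniform bound $\sup_{u\in[-1,1]}\bigl|\log|\phi_n(u)|\bigr|\le E_n$ with $E_n=n^{o(1)}$, combined with the classical lower bound $\varphi(n)\gg n/\log\log n$.

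The key estimate comes from the Möbius factorisation $\phi_n(u)=\prod_{d\mid n}(u^d-1)^{\mu(n/d)}$, whence $\log|\phi_n(u)|=\sum_{d\mid n}\mu(n/d)\log|u^d-1|$. For $u\in[0,1)$ I would write $\log(1-u^d)=\log(1-u)+\log(1+u+\cdots+u^{d-1})$. Since $\sum_{d\mid n}\mu(n/d)=0$ for $n>1$, the singular contribution $\log(1-u)$ — which alone would be unbounded near the corner $u=1$ — cancels exactly, leaving
\[
\log|\phi_n(u)|=\sum_{d\mid n}\mu(n/d)\,\log\bigl(1+u+\cdots+u^{d-1}\bigr),
\]
each summand lying in $[0,\log d]$. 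Hence $\bigl|\log|\phi_n(u)|\bigr|\le \tau(n)\log n=n^{o(1)}$ on $[0,1)$, and by continuity at $u=1$, where the sum equals $\Lambda(n)$.

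For $u\in(-1,0]$ I set $w=-u\in[0,1)$ and split the divisors of $n$ by parity. The odd $d$ contribute $\log(1+w^d)\in[0,\log 2]$, which is harmless, while the even $d$ (present only when $n$ is even) are handled by the same telescoping, now against $\log(1-w^2)$ and using that $\sum_{d\mid n,\ 2\mid d}\mu(n/d)=[\,n=2\,]=0$ for $n>2$. This again yields $\bigl|\log|\phi_n(u)|\bigr|\le C\,\tau(n)\log n$, so the uniform bound holds with $E_n=O(\tau(n)\log n)=n^{o(1)}$.

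Finally I combine the pieces. On the boundary of either square, where $R=1\mp\tfrac12 n^{-1+\varepsilon}$, one has $\log\Phi_n(x,y)=\varphi(n)\log R+O(E_n)$, and $E_n/\varphi(n)\ll \tau(n)\log n\,\log\log n/n=n^{-1+o(1)}$ is eventually smaller than $\tfrac12 n^{-1+\varepsilon}$. Thus for $n\ge n_0(\varepsilon)$ the sign of $\log\Phi_n$ coincides with that of $\log R$: one gets $\Phi_n\le1$ on the inner square and $\Phi_n>1$ off the outer square, which by the radial monotonicity gives both inclusions. The main obstacle is exactly the uniform–in–$u$ control of $\log|\phi_n(u)|$: near $u=\pm1$, the directions of the corners of the square, the factor attached to the root closest to the real axis is only of size $\sim 1/n$, so the naive triangle–inequality bound overshoots by a factor $\varphi(n)$; it is precisely the cancellation $\sum_{d\mid n}\mu(n/d)=0$ in the telescoped sum that recovers the needed power of $n$.
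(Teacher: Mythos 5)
Your proof is correct, and it takes a genuinely different route from the paper's. The paper treats the two inclusions asymmetrically: the inner square is handled by the trivial upper bound $\phi_n(t)\le \rmL(\phi_n)\max\{1,|t|\}^{\varphi(n)}$ together with Bateman's estimate $\rmL(\phi_n)\le n^{d(n)/2}$, while the outer square requires a lower bound on $\phi_n(t)$ obtained through a case distinction on $|t|-1$: when $|t|-1$ is not too small one uses $t^n-1=\prod_{d\mid n}\phi_d(t)$ to get $\phi_n(t)\ge |t|^{\varphi(n)-1}(|t|-1)\prod_{d\mid n,\,d\ne n}\rmL(\phi_d)^{-1}$ (Lemma \ref{lemme:minoratiodephin}), and when $|t|$ is very close to $1$ one uses the Lipschitz-type estimate of Lemma \ref{lemme:majoratiodephin-1} together with $\phi_n(\pm1)\ge 1$. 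You replace all of this by a single two-sided bound $\sup_{|u|\le 1}\bigl|\log\phi_n(u)\bigr|\le 2^{\omega(n)}\log n$, obtained by Möbius-inverting $x^n-1=\prod_{d\mid n}\phi_d(x)$ and cancelling the singular factor $\log(1-u)$ (resp.\ $\log(1-w^2)$), whose total exponent $\sum_{d\mid n}\mu(n/d)$ (resp.\ $\sum_{2\mid d,\, d\mid n}\mu(n/d)$) vanishes for $n>2$; each remaining factor $(1-u^d)/(1-u)$ lies in $[1,d]$, and only the $2^{\omega(n)}$ divisors with $\mu(n/d)\ne0$ contribute. This is the same underlying identity as in Lemma \ref{lemme:minoratiodephin}, but exploited in its inverted form it yields simultaneously the upper and the lower estimate --- in particular $\rmm(\phi_n)\ge n^{-d(n)}$, precisely the quantity the paper notes cannot be extracted from \cite{FLW} --- and it dispenses both with Bateman's coefficient bound and with the case split near $|t|=1$. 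The constants are of the same quality $e^{n^{o(1)}}$, so the exponent $-1+\varepsilon$ is unchanged, and the final assembly via homogeneity (monotonicity along rays) is a clean substitute for the paper's direct verification of \eqref{equation:contientlepetitcarre} and \eqref{equation:contenudanslegrandcarre}. The only point to add is the trivial case $x=0$, where $\Phi_n(0,y)=y^{\varphi(n)}$.
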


 Enfin nous discutons de l'optimalit\' e de la formule \eqref{central}
 \begin{thm}\label{thmcentral3}
 On adopte les notations du th\' eor\`eme \ref{thmcentral}. Soit $d\geq 4$ un entier tel que $d$ et $d+2$ soient des totients. Il existe une constante positive $v_d>0$ telle que pour $N$ suffisamment grand, on ait l'in\'egalit\' e 
$$
 \calA_d(N) \geq C_d N^{\frac 2d} 
+ v_d N^\frac{2}{d+2}.
$$
\end{thm}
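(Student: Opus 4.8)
Le plan est de minorer $\calA_d(N)$ en ajoutant, aux entiers représentés par les formes cyclotomiques de degré $d$ (qui fournissent le terme principal $C_d N^{2/d}$), une contribution d'ordre $N^{2/(d+2)}$ provenant d'une seule forme de degré $d+2$. Comme $d$ est un totient pair et que $d+1$ n'est pas un totient, l'hypothèse donne $d^\dag=d+2$; on fixe alors un entier $n_0\not\equiv 2\bmod 4$ tel que $\varphi(n_0)=d+2$. La forme $\Phi_{n_0}$ est définie positive, de discriminant non nul et, puisque $d+2\geq 6$, son groupe d'automorphismes est fini. Pour chaque forme $\Phi_n$, on désigne par $R_n(N)\subseteq\{1,\dots,N\}$ l'ensemble des entiers représentés par $\Phi_n$ sous les conditions \eqref{3cond}, de sorte que $\calA_d(N)=\bigl|\bigcup_{\varphi(n)\geq d}R_n(N)\bigr|$.

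La première étape consiste à minorer la contribution d'une seule forme: on a $|R_{n_0}(N)|\geq c_0 N^{2/(d+2)}$ pour une constante $c_0>0$ et $N$ grand. En effet, le nombre de points de $\Z^2$ dans la région $\Phi_{n_0}(x,y)\leq N$ vaut $A_{\Phi_{n_0}}N^{2/(d+2)}+O(N^{1/(d+2)})$, tandis que le nombre d'entiers possédant au moins deux représentations essentiellement distinctes par $\Phi_{n_0}$ est $O(N^{\beta})$ avec $\beta<2/(d+2)$ d'après la variante de \cite{SX}. La deuxième étape est une minoration de type Bonferroni. En posant $S=\{n\not\equiv 2\bmod 4:\varphi(n)=d\}$, on écrit
$$
\calA_d(N)\geq \sum_{n\in S}|R_n(N)|+|R_{n_0}(N)|-\sum_{\substack{n\neq n'\\ n,n'\in S\cup\{n_0\}}}\bigl|R_n(N)\cap R_{n'}(N)\bigr|.
$$
D'après le théorème \ref{thmcentral2} et l'étude du cas d'une seule forme, le premier terme vaut $C_d N^{2/d}+O(N^{\theta_1})$ avec $\theta_1<2/(d+2)$ (combinaison de l'erreur de comptage de points entiers, d'ordre $N^{1/d}$, et des auto-recouvrements de chaque forme de degré $d$). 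Les intersections sont des ensembles de valeurs communes à deux formes distinctes (de même degré $d$, ou de degrés $d$ et $d+2$) et se majorent par $O_\varepsilon(N^{\theta_2+\varepsilon})$ au moyen des bornes sur les valeurs communes issues de \cite{SX}.

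Il reste à comparer les exposants. On a toujours $1/d<2/(d+2)$ pour $d\geq 4$; en supposant établi que $\theta_2<2/(d+2)$, on obtient
$$
\calA_d(N)\geq C_d N^{2/d}+c_0 N^{2/(d+2)}-O_\varepsilon\bigl(N^{\theta+\varepsilon}\bigr),\qquad \theta=\max(\theta_1,\theta_2)<2/(d+2),
$$
ce qui fournit l'inégalité cherchée avec $v_d=c_0/2$ dès que $N$ est assez grand. L'obstacle principal est le contrôle des valeurs communes: il faut garantir que les entiers représentés par $\Phi_{n_0}$ ne sont pas, pour une proportion tendant vers $1$, déjà représentés par les formes de degré $d$, autrement dit que l'exposant $\theta_2$ reste strictement inférieur à $2/(d+2)$. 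Ce point est le plus délicat pour les petites valeurs de $d$ (en particulier $d=4$, où $\eta_4>2/(d+2)$) et exige toute la précision de l'énoncé de \cite{SX} sur les valeurs communes de deux formes binaires, notamment dans le cas de degrés distincts $d$ et $d+2$.
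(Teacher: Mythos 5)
Votre sch\'ema g\'en\'eral (in\'egalit\'e de Bonferroni en ajoutant une forme $\Phi_{n_0}$ de degr\'e $d+2$ \`a la famille de degr\'e $d$, puis minoration $|R_{n_0}(N)|\gg N^{2/(d+2)}$ par comptage de points et contr\^ole des multiplicit\'es) est le bon point de d\'epart, et co\"incide avec le d\'ebut de la preuve du papier. Mais il y a un trou r\'eel, que vous signalez d'ailleurs vous-m\^eme en fin de texte : toute la difficult\'e est de majorer $\bigl|R_{n_0}(N)\cap R_{n_i}(N)\bigr|$ lorsque $\varphi(n_0)=d+2$ et $\varphi(n_i)=d$, c'est-\`a-dire les valeurs communes \`a deux formes de \emph{degr\'es diff\'erents}. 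Le th\'eor\`eme \ref{thmF1=F2} et le corollaire \ref{F1=F2} ne s'appliquent qu'\`a deux formes de m\^eme degr\'e (l'hypersurface $F_1(X_1,X_2)-F_2(X_3,X_4)=0$ doit \^etre une surface projective homog\`ene pour invoquer Salberger), et l'\'enonc\'e de \cite{SX} n'en fournit pas davantage. A priori, rien n'emp\^eche que \emph{toutes} les valeurs de $\Phi_{n_0}$ jusqu'\`a $N$, soit $\asymp N^{2/(d+2)}$ entiers, soient d\'ej\`a repr\'esent\'ees par une forme de degr\'e $d$ (dont l'ensemble des valeurs est de cardinal $\asymp N^{2/d}$, bien plus grand) ; votre exposant $\theta_2<2/(d+2)$ n'est donc pas \'etabli, et c'est pr\'ecis\'ement le c{\oe}ur du th\'eor\`eme.

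Le papier contourne enti\`erement cet obstacle par un argument de congruences : les propositions \ref{prop:confinmodp}, \ref{prop:confinmod9} et \ref{prop:confinmod4} montrent que $\Phi_{n_i}(a,b)$ est confin\'e \`a quelques classes modulo un diviseur bien choisi de $n_i$, et le lemme \ref{confinementapplique} en d\'eduit l'existence d'un module $D$ et de classes $a_0,b_0\bmod D$ telles que $\Phi_m(a,b)$ \'evite \emph{toutes} les valeurs de \emph{toutes} les formes $\Phi_{n_i}$ de degr\'e $d$ d\`es que $a\equiv a_0$, $b\equiv b_0 \bmod D$. L'intersection \`a contr\^oler est alors vide par construction, et il ne reste qu'\`a minorer le nombre de valeurs distinctes de $\Phi_m$ sur cette progression, ce que le papier fait par Cauchy--Schwarz en majorant $\sum_n\rho^2(n)$ via le comptage de points sur la surface $\Phi_m(X_1,X_2)=\Phi_m(X_3,X_4)$ (m\^eme degr\'e cette fois, donc la m\'ethode de la proposition \ref{N1<<} s'applique). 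Sans un ingr\'edient de ce type --- ou un r\'esultat nouveau sur les valeurs communes de formes de degr\'es $d$ et $d+2$ --- votre d\'emonstration ne se referme pas.
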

\begin{rem} Il est naturel de conjecturer qu'il y a une infinit\' e de $d$ tels que $d+2$ soit aussi un totient : c'est une cons\' equence de la conjecture des nombres premiers jumeaux. Enfin, on peut tout \`a fait envisager
des \' enonc\' es analogues sous l'hypoth\`ese $d^\dag =d+\nu$ o\`u $\nu$ est un entier pair fix\' e. Cette extension n\' ecessiterait une adaptation des propri\' et\' es de confinement d\' ecrites aux \S \ref{confinemodp}, \S \ref{confinemod9} et \S \ref{confinemod4}.
\end{rem}
 
\section{Valeurs prises par une forme binaire}\label{StewartYaonew}
L'objet de cette section est de d\' ecrire pr\' ecis\' ement le r\' esultat de Stewart et Xiao \cite{SX}. Ce r\' esultat d\'eja mentionn\' e plus haut est \`a la base de notre travail.
 Dans toute cette section $F= F(X,Y)$ est un polyn\^ome homog\`ene de $\mathbb Z [X,Y]$ de degr\' e $d\geq 3$. On dit alors que $F$ est une {\it forme binaire de degr\' e $d$}. 
Un entier $m$ est dit {\it repr\' esent\'e} par une forme binaire $F$ s'il existe $(x,y)\in\Z^2$ tel que $F(x,y)=m$. On d\' esigne par $R_F (N)$ le cardinal de l'ensemble des entiers $m$ repr\' esent\' es
par $F$ et v\' erifiant $0\leq \vert m \vert \leq N$. On appelle {\it automorphisme } de $F$ toute matrice $U$ de ${\rm Gl}(2, \mathbb Q)$
$$
U=\begin{pmatrix} u_1 & u_2\\
u_3 & u_4
\end{pmatrix},
$$
telle que 
\begin{equation}\label{defauto}
F (X,Y) =F (u_1X +u_2Y, u_3X+u_4 Y).
\end{equation}
Muni de la multiplication des matrices, l'ensemble des automorphismes de $F$ forme un sous--groupe fini de ${\rm Gl } (2, \mathbb Q)$, not\'e ${\rm Aut } F$. Il existe alors une ensemble $\mathcal G$ de dix sous--groupes finis de ${\rm Gl}(2,\mathbb Z)$ tel que que pour toute forme binaire $F$ de degr\' e $d \geq 3$, il existe $T \in {\rm Gl}(2, \mathbb Q)$ et un unique $G \in \mathcal G$ v\'erifiant 
$$
{\rm Aut} F = T G T^{-1}. 
$$
L'ensemble $\mathcal G$ contient en particulier les deux groupes suivants
\begin{enumerate}
\item $\mathbb D_2$, groupe di\'edral \`a quatre \' el\' ements, engendr\' e par 
$$
\begin{pmatrix}
 0&1\\ 1 & 0 
\end{pmatrix}
\text{ et } 
\begin{pmatrix}
-1 & 0\\ 0 & -1
\end{pmatrix},
$$

\item $\mathbb D_4$, groupe di\'edral \`a huit \' el\' ements, engendr\' e par 
$$
\begin{pmatrix}
 0&1\\ 1 & 0 
\end{pmatrix}
\text{ et } 
\begin{pmatrix}
0 & 1\\ -1 & 0
\end{pmatrix}.
$$
\end{enumerate} 
\`A partir de la d\' ecomposition pr\' ec\'edente, Stewart et Xiao construisent un nombre rationnel $W_F$ (voir \cite[Theorem 1.2]{SX}) dont la d\' efinition, de nature alg\'ebrique, est longue puisqu'elle envisage les dix possibilit\' es pour $G$. Disons, pour m\'emoire, que $W_F$ tient compte des d\' eterminants des r\' eseaux de $\mathbb Z^2$ dont l'image, par certains sous--groupes de ${\rm Aut } F$, est incluse dans $\mathbb Z^2$. En vue des applications,
nous nous restreignons \`a deux cas particuliers
\begin{enumerate}
\item Cas o\`u $G=\mathbb D_2$. Soit $\Lambda$ le sous--r\'eseau des \' el\' ements $(v,w)\in \mathbb Z^2$ tels que, pour tout $A \in {\rm Aut } F$ on ait $A \begin{pmatrix} v \\ w\end{pmatrix}\in \mathbb Z^2.$ On pose alors
\begin{equation}\label{defWF1}
W_F= \frac 12 \Bigl( 1 -\frac 1{2\vert \det (\Lambda)\vert}\Bigr).
\end{equation}
\item Cas o\`u $G=\mathbb D_4$. D'abord $\Lambda$ est d\' efini comme pr\' ec\' edemment. Le groupe ${\rm Aut}F $ poss\`ede exactement trois sous--groupes de cardinal $4$, not\' es $G_1$, $G_2$ et $G_3$. On d\' esigne par
$\Lambda_i$ ($1\leq i \leq 3$) le sous--r\'eseau des \' el\' ements $(v,w)\in \mathbb Z^2$ tels que, pour tout $A \in G_i$ on ait $A \begin{pmatrix} v \\ w\end{pmatrix}\in \mathbb Z^2.$ 
On pose alors 
\begin{equation}\label{defWF2}
W_F =\frac 12 \Bigl( 1 -\frac 1{2 \vert \det (\Lambda_1) \vert} -\frac 1{2 \vert \det (\Lambda_2) \vert} -\frac 1{2 \vert \det (\Lambda_3) \vert} + \frac 3{4 \vert \det (\Lambda) \vert}\Bigr).
\end{equation}

\end{enumerate}
Notons
$$
A_F := \iint_{\vert F (x,y) \vert \leq 1} {\rm d } x {\rm d } y, 
$$
l'aire de la r\'egion fondamentale associ\' ee \`a $F$. 
 Enfin, pour $d\geq 4$ entier pair, nous introduisons la constante $\beta^*_d$ d\'efinie par 
 \begin{equation}\label{defbeta*}
\beta^*_d = \begin{cases}
 3/(d\sqrt d) \text{ pour } d=4,\, 6, \, 8
 \\ 1/d \text{ pour } d\geq 10.
 \end{cases}
 \end{equation}
Nous pouvons maintenant \' enoncer le r\' esultat fondamental de \cite[Theorem 1.1]{SX}. 
\begin{thm} \label{thm21}
Pour tout $d\geq 3$ il existe une constante $\beta_d <2/d$ ayant la propri\' et\' e suivante : 
Pour toute forme binaire $F$ de degr\' e $d$, de discriminant non nul, pour tout $\varepsilon >0$, on a, uniform\' ement pour $N \geq 2$ l'\'egalit\' e
\begin{equation}\label{RF(N)=}
R_F (N) = A_F \, W_F N^\frac 2d + O_{F,\varepsilon} (N^{\beta_d +\varepsilon}). 
\end{equation}
Si, dans la formule \eqref{RF(N)=}, on se restreint aux formes binaires $F$ de degr\' e pair $d\geq 4$, de discriminant non nul, sans facteur lin\'eaire r\' eel, 
on peut donner \`a
 $\beta_d $ la valeur $\beta_d^*$, d\' efinie en \eqref{defbeta*}.
\end{thm}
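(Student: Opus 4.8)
Le plan consiste \`a \'evaluer $R_F(N)$ en trois temps: compter les points entiers de la r\'egion $\vert F\vert\le N$, passer du nombre de repr\'esentations au nombre d'entiers distincts repr\'esent\'es au moyen de l'action de $\Aut F$, puis contr\^oler l'erreur due aux co\"incidences accidentelles. Notons $r_F(m)$ le nombre de couples $(x,y)\in\Z^2$ tels que $F(x,y)=m$. La r\'egion $\{(x,y)\in\R^2\mid \vert F(x,y)\vert\le N\}$ \'etant la dilat\'ee de rapport $N^{1/d}$ de la r\'egion fondamentale d'aire $A_F$, un comptage classique de points entiers (contr\^olant l'\'ecart \`a l'aire par la longueur du bord) donne
\begin{equation*}
\sum_{0<\vert m\vert\le N} r_F(m)=\#\{(x,y)\in\Z^2\mid 0<\vert F(x,y)\vert\le N\}=A_F\,N^{2/d}+O(N^{1/d}),
\end{equation*}
le terme d'erreur \'etant de l'ordre de la taille lin\'eaire $N^{1/d}$ du bord, au moins lorsque la r\'egion est born\'ee.

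Le groupe fini $\Aut F$ laisse $F$ invariant, donc permute les points entiers de la r\'egion, et deux points d'une m\^eme orbite donnent la m\^eme valeur. En affectant \`a chaque point entier le poids $1/n(x,y)$, o\`u $n(x,y)=\#\big(\Aut F\cdot(x,y)\cap\Z^2\big)$ est le nombre de repr\'esentants entiers de son orbite, chaque orbite rencontrant $\Z^2$ contribue pour $1$, de sorte que, aux co\"incidences accidentelles pr\`es,
\begin{equation*}
R_F(N)=\sum_{\substack{(x,y)\in\Z^2\\ 0<\vert F(x,y)\vert\le N}}\frac{1}{n(x,y)}.
\end{equation*}
Le point d\'elicat est que les \'el\'ements de $\Aut F\subset\GL(2,\Q)$, d\'efinis sur $\Q$, n'envoient pas $\Z^2$ dans $\Z^2$ en g\'en\'eral: la valeur de $n(x,y)$ ne d\'epend que de l'appartenance de $(x,y)$ aux sous--r\'eseaux $\Lambda$ et $\Lambda_i$ sur lesquels les automorphismes restent entiers. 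On \'evalue donc la somme pond\'er\'ee en d\'ecoupant $\Z^2$ selon ces sous--r\'eseaux, chacun ayant pour densit\'e $1/\vert\det(\cdot)\vert$; une inclusion--exclusion sur les sous--groupes de $\Aut F$ — dans les cas $\mathbb D_2$ et $\mathbb D_4$, les sous--groupes d'ordre $2$ et $4$ — fait alors appara\^itre pr\'ecis\'ement le nombre rationnel $W_F$ des formules \eqref{defWF1} et \eqref{defWF2}, d'o\`u le terme principal $A_F\,W_F\,N^{2/d}$.

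La somme pond\'er\'ee majore $R_F(N)$, l'\'ecart provenant exactement des entiers $m$ repr\'esent\'es par deux points appartenant \`a des orbites distinctes, c'est-\`a-dire poss\'edant deux repr\'esentations \emph{essentiellement distinctes}. Le c\oe ur de la preuve, et l\`a r\'eside le principal obstacle, consiste \`a majorer
\begin{equation*}
\#\{\, m\ :\ 0<\vert m\vert\le N,\ m \text{ a au moins deux repr\'esentations essentiellement distinctes par } F\,\}=O_{F,\varepsilon}(N^{\beta_d+\varepsilon}).
\end{equation*}
L'outil est la m\'ethode du d\'eterminant de Heath--Brown et Salberger, appliqu\'ee au comptage des points entiers de la surface affine $F(x_1,y_1)=F(x_2,y_2)$ fibr\'ee par la valeur commune $m$, combin\'ee aux majorations uniformes du nombre de solutions d'une \'equation de Thue $F(x,y)=m$ (Bombieri--Schmidt, Stewart). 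On stratifie les entiers repr\'esent\'es selon la taille de $r_F(m)$ et selon la hauteur des points, on construit sur chaque strate, via la m\'ethode du d\'eterminant, un petit nombre de courbes auxiliaires de degr\'e contr\^ol\'e contenant les points en jeu, puis on borne les points entiers de ces courbes. La sommation des contributions fournit un exposant $\beta_d$ strictement inf\'erieur \`a $2/d$, cette in\'egalit\'e stricte traduisant le fait que les co\"incidences accidentelles sont plus rares que les repr\'esentations elles-m\^emes.

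Lorsque $d\ge 4$ est pair et que $F$ n'a pas de facteur lin\'eaire r\'eel — le cas des formes cyclotomiques — la r\'egion $\{\vert F\vert\le 1\}$ est born\'ee, de sorte que les points repr\'esentatifs v\'erifient $\max(\vert x\vert,\vert y\vert)\ll N^{1/d}$ avec des coordonn\'ees effectivement contraintes dans les deux directions. Cette compacit\'e r\'eduit le terme de bord du comptage de points entiers et permet surtout d'affiner la m\'ethode du d\'eterminant: les polyn\^omes auxiliaires peuvent \^etre choisis de plus bas degr\'e et le d\'enombrement des points de la surface fibr\'ee s'am\'eliore, ce qui remplace $\beta_d$ par la valeur explicite $\beta_d^*$ de \eqref{defbeta*} — \`a savoir $3/(d\sqrt d)$ pour $d=4,6,8$, o\`u appara\^it le gain en racine carr\'ee propre \`a la m\'ethode du d\'eterminant, et $1/d$ pour $d\ge 10$, o\`u une estimation plus simple suffit. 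En r\'eunissant le terme principal $A_F\,W_F\,N^{2/d}$ et la majoration des co\"incidences, on obtient la formule \eqref{RF(N)=}.
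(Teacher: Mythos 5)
Un point essentiel vous a \'echapp\'e : le th\'eor\`eme \ref{thm21} n'est pas d\'emontr\'e dans l'article. C'est un r\'esultat import\'e tel quel de Stewart et Xiao (\cite[Theorem 1.1]{SX}), que les auteurs se contentent d'\'enoncer apr\`es avoir explicit\'e la constante $W_F$ dans les deux seuls cas de groupes d'automorphismes ($\mathbb D_2$ et $\mathbb D_4$) dont ils auront besoin. Il n'y a donc pas de preuve interne \`a laquelle comparer la v\^otre. Cela dit, votre sch\'ema reproduit fid\`element l'architecture de la d\'emonstration de \cite{SX} : comptage des points entiers de la r\'egion $|F|\le N$ donnant $A_F N^{2/d}$, passage au nombre d'entiers repr\'esent\'es en pond\'erant par les orbites de $\Aut F$ (ce qui fait appara\^itre $W_F$ par inclusion--exclusion sur les sous--r\'eseaux $\Lambda$, $\Lambda_i$), puis majoration du nombre d'entiers admettant deux repr\'esentations essentiellement distinctes via la m\'ethode du d\'eterminant appliqu\'ee \`a la surface $F(x_1,x_2)=F(x_3,x_4)$, combin\'ee aux bornes uniformes sur les \'equations de Thue.

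En tant que d\'emonstration autonome, votre texte comporte toutefois deux lacunes r\'eelles. D'une part, l'\'etape d\'ecisive --- la majoration en $O_{F,\varepsilon}(N^{\beta_d+\varepsilon})$ du nombre d'entiers ayant deux repr\'esentations essentiellement distinctes, avec un $\beta_d$ \emph{strictement} inf\'erieur \`a $2/d$, et a fortiori les valeurs explicites $\beta_d^*=3/(d\sqrt d)$ pour $d=4,6,8$ et $1/d$ pour $d\ge 10$ --- est simplement annonc\'ee, non \'etablie : vous nommez les outils (Heath--Brown, Salberger, Bombieri--Schmidt) mais aucune stratification effective ni aucun calcul d'exposant n'est men\'e, alors que c'est pr\'ecis\'ement l\`a que r\'eside tout le contenu du th\'eor\`eme. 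D'autre part, pour la premi\`ere partie de l'\'enonc\'e (degr\'e $d\ge 3$ quelconque, discriminant non nul), la r\'egion $|F(x,y)|\le N$ peut \^etre non born\'ee (pr\'esence de facteurs lin\'eaires r\'eels) ; votre comptage de points entiers avec terme d'erreur $O(N^{1/d})$ \og au moins lorsque la r\'egion est born\'ee \fg\ ne couvre pas ce cas, qui exige l'estimation \`a la Mahler (terme d'erreur en $N^{1/(d-1)}$) et un traitement s\'epar\'e des points de grande hauteur. Le sch\'ema est le bon, mais en l'\'etat il s'agit d'un plan de preuve, pas d'une preuve ; dans le contexte de l'article, la seule r\'eponse exacte est de renvoyer \`a \cite{SX}.
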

\section{Valeurs communes \`a deux formes binaires} Aux formes binaires $F_1=F_1 (X_1, X_2)$ et $F_2= F_2 (X_3, X_4) $ on associe les deux fonctions de comptage suivantes :
\begin{enumerate}
\item
Pour $B\geq 2$, 
$ \mathcal N_{F_1,F_2} (B)$ est 
le cardinal de l'ensemble 
$$
\Bigl\{ (x_1,x_2,x_3, x_4)\in \mathbb Z^4 \, \mid \, \max_{i=1, \, 2,\, 3,\, 4} \vert x_i\vert \leq B, \ F_1 (x_1, x_2) = F_2 (x_3, x_4)\Bigr\}.
 $$
 \item 
 Pour $N \geq 2$, $R_{F_1,F_2} (N)$ est le cardinal de l'ensemble
 $$
 \Bigl\{ n \,\mid \, \vert n\vert \leq N,\, n =F_1(x_1,x_2) = F_2 (x_3, x_4), \text{ pour certains } (x_1,x_2,x_3,x_4) \in \mathbb Z^4
 \Bigr\}.
 $$
 \end{enumerate}
 G\' en\' eralisant la d\' efinition \eqref{defauto}, on dit que $F_1$ et $F_2$ sont {\it isomorphes} s'il existe $U \in {\rm Gl} (2, \mathbb Q)$ tel que
 \begin{equation}\label{isom}
 F_1(X_1, X_2) =F_2 (u_1X_1 +u_2 X_2, u_3 X_1 +u_4 X_2).
 \end{equation}
 En particulier deux formes isomorphes ont m\^eme degr\' e.
Nous prouverons au \S \ref{preuve} le
\begin{thm}\label{thmF1=F2} Soient $F_1$ et $F_2$ deux formes non isomorphes de m\^eme degr\' e $d \geq 3$. On suppose de plus
que les discriminants de $F_1$ et de $F_2$ sont non nuls et qu'au moins une des formes $F_i$ n'est pas divisible
par une forme lin\'eaire non nulle \`a coefficients rationnels. Alors, pour tout $\varepsilon >0$ on a
la majoration
\begin{equation}\label{daisy}
\mathcal N_{F_1,F_2} (B) =
 O( B^{d\eta_d +\varepsilon}),
 \end{equation}
 o\`u $\eta_d$ est d\' efini en \eqref{defeta}.

\end{thm}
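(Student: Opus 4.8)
The plan is to follow the treatment of common values in \cite{SX}, adapting the bookkeeping so as to extract the exponent $\eta_d$ of \eqref{defeta} under the precise hypotheses stated. Throughout I count the integer quadruples $(x_1,x_2,x_3,x_4)$ in the box $\max_i|x_i|\le B$ with $m:=F_1(x_1,x_2)=F_2(x_3,x_4)$, and I first dispose of the degenerate locus $m=0$. Since at least one $F_i$, say $F_1$, has nonzero discriminant and no rational linear factor, the only integer solution of $F_1(x_1,x_2)=0$ is $(x_1,x_2)=(0,0)$; hence $m=0$ forces $x_1=x_2=0$, and the remaining freedom $F_2(x_3,x_4)=0$ contributes at most $O(B)$ points, namely the integer points of size $\le B$ on the finitely many rational lines dividing $F_2$. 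As $d\eta_d\ge 1$ this is absorbed in the error term, so from now on $m\ne 0$.

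Next I would exploit homogeneity. Writing $(x_1,x_2)=g_1(a_1,a_2)$ and $(x_3,x_4)=g_2(a_3,a_4)$ with $(a_1,a_2)$, $(a_3,a_4)$ primitive, the equation becomes $g_1^d\,F_1(a_1,a_2)=g_2^d\,F_2(a_3,a_4)$. Because $F_i(\lambda v)=\lambda^d F_i(v)$, common values organize themselves into $d$-th-power towers: comparing $d$-th-power-free parts forces $F_1(a_1,a_2)$ and $F_2(a_3,a_4)$ to share their $d$-th-power-free part, after which $(g_1,g_2)$ is pinned down up to one integer scaling parameter constrained by the box. This reduces the count to a sum over primitive pairs, weighted by $O(B)$ scalings, and isolates the genuinely two-dimensional object: the projective surface $S\subset\P^3$ defined by $F_1(x_1,x_2)=F_2(x_3,x_4)$, of degree $d$.

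The decisive input is the non-isomorphism hypothesis. If $F_1$ and $F_2$ were isomorphic in the sense of \eqref{isom}, a transformation $U\in\GL(2,\Q)$ with $F_1=F_2\circ U$ would furnish a two-parameter, hence $\asymp B^2$, family of solutions $(x_3,x_4)=U(x_1,x_2)$; conversely I expect non-isomorphism together with the nonzero discriminants to force $S$ to be geometrically irreducible and to carry no rational ruling, so that no such family survives. The lines lying on $S$ arise by pairing a root of $F_1$ with a root of $F_2$; as the discriminants are nonzero these roots are defined over proper extensions, so each such line carries only $O(1)$ rational points, all lying over $m=0$ and thus already accounted for. With the degenerate contributions removed I would then invoke the point-counting machinery underlying \cite{SX}, the determinant method of Heath--Brown and Salberger, to bound the rational points of height $\le B$ on $S$ off its lines: auxiliary hypersurfaces of a degree $D$ to be optimized cut $S$ in curves of degree $\asymp dD$, on which a curve bound of the shape $\ll B^{2/\deg+\varepsilon}$ applies, and the choice $D\asymp\sqrt d$ produces a power saving. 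Reinstating the scaling sum and balancing against the elementary bound then yields $d\eta_d$: the optimization gives $1/2+9/(4\sqrt d)$ in the range $4\le d\le 20$, while for $d\ge 21$ the cleaner bound $B^{1+\varepsilon}$, i.e.\ $d\eta_d=1$, takes over, the cubic case $d=3$ (where $S$ is a cubic surface with its $27$ lines) being treated separately and accounting for the special value $\eta_3$.

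The hard part will be the geometric analysis of $S$: extracting from the mere non-isomorphism of $F_1$ and $F_2$ both the irreducibility of $S$ and the absence of any rational family of common values, and classifying precisely the lines and low-degree curves on $S$ so that their contribution stays within $O(B^{d\eta_d+\varepsilon})$. Equally delicate is the quantitative optimization of the determinant method and its interface with the archimedean box constraint and the $d$-th-power-free bookkeeping, since it is this balancing, rather than any crude power saving, that pins down the exact exponent $\eta_d$.
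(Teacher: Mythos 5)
Your overall architecture --- pass to rational points on the degree-$d$ surface $S\subset\P^3(\C)$ defined by $F_1(X_1,X_2)=F_2(X_3,X_4)$, separate the points lying on lines contained in $S$ from the rest, and bound the rest by determinant-method point counting --- is indeed the paper's architecture. But two of your steps have genuine gaps. The most serious one is your classification of the lines on $S$. You assert that they ``arise by pairing a root of $F_1$ with a root of $F_2$'' and therefore carry $O(1)$ rational points, all lying over $m=0$. This misses the graph-type lines $X_1=u_1X_3+u_2X_4$, $X_2=u_3X_3+u_4X_4$, which lie on $S$ precisely when $F_2=F_1\circ U$ for some $U\in\GL_2(\C)$ --- a situation that cannot be excluded (for $d=3$ any two binary cubics of nonzero discriminant are $\GL_2(\C)$-equivalent, so $S$ always contains such lines). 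A graph line with irrational coefficients can still carry a one-parameter rational family, hence $\asymp B$ integer points, and these do \emph{not} lie over $m=0$: with $F_1=X_1^4+X_2^4$ and $F_2=4X_3^4+X_4^4$, the line $X_1=\sqrt2\,X_3$, $X_2=X_4$ lies on $S$ and contains the integer points $(0,x_2,0,x_2)$, whose common value is $x_2^4\neq 0$. The whole point of the paper's lemma on lines is to handle exactly this: the no-rational-linear-factor hypothesis kills the root-pairing lines, and the non-isomorphism over $\Q$ forces at least one $u_i$ on a graph line to be irrational, after which a linear-independence argument over $\Q$ gives $O(B)$ integer points per line and $O(B)$ in total (the number of lines on a smooth degree-$d$ surface being bounded). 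Using non-isomorphism only to exclude a $B^2$-family, as you do, is not sufficient.

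The second gap is that you never actually produce the exponent $d\eta_d$: the entire determinant-method optimization is deferred to ``the hard part.'' The paper does not redo this optimization. It first checks that $S$ is geometrically integral --- a consequence of smoothness alone (nonzero discriminants make $S$ smooth, while a reducible surface in $\P^3$ would be singular along the nonempty intersection of two components); non-isomorphism plays no role here, contrary to what you suggest. It then quotes Salberger's theorem verbatim, which bounds by $O(B^{d\eta_d+\varepsilon})$ the rational points of height $\le B$ off the lines of a geometrically integral degree-$d$ surface, and finally sums over the gcd $\delta$ of the quadruple via $\mathcal N^{(1)}_{F_1,F_2}(B)\le\sum_{1\le\delta\le B}N^{(1)}(S,B/\delta)$. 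Your $d$-th-power-free bookkeeping with ``$O(B)$ scalings'' is an unnecessary detour which, taken literally, would multiply the main bound by $B$; the gcd summation above is the clean way to pass from primitive points to all integer points.
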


\begin{rem} La condition que l'une des formes $F_1$ et $F_2$ ne contient pas de facteur lin\' eaire sur $\mathbb Q$ est importante.
Consid\' erons les deux formes 
$$
F_1 (X_1, X_2) = X_1 (X_1^2+X_2^2)
$$
et
$$
F_2 (X_3, X_4) = X_3 (X_3^2 +2 X_4^2).
$$
Elles ne sont pas isomorphes. L'\' egalit\'e $F_1 (0, x_2) = F_2 (0, x_4)=0$ implique l'in\'egalit\' e $\mathcal N_{F_1,F_2} (B) \gg B^2$, ce qui est sup\'erieur \`a la
partie droite de \eqref{daisy}.
\end{rem} 
Du th\' eor\`eme \ref{thmF1=F2} nous d\' eduirons le
 \begin{cor} \label{F1=F2}Soient $F_1$ et $F_2$ deux formes v\' erifiant les hypoth\`eses du th\' eor\`eme~\ref{thmF1=F2}. On suppose de plus que les deux formes $F_1$ et $F_2$ sont d\'efinies positives.
 Alors pour tout $\varepsilon >0$ on a l'in\' egalit\' e $$R_{F_1, F_2} (N)
 \ll N^{ \eta_d +\varepsilon}.
 $$
 \end{cor}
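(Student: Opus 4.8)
The plan is to deduce the Corollary from Theorem~\ref{thmF1=F2} by relating the two counting functions $R_{F_1,F_2}(N)$ and $\mathcal N_{F_1,F_2}(B)$ through the hypothesis that both forms are positive definite. The key observation is that when $F_1$ and $F_2$ are definite of degree $d$, the inequality $F_i(x,y)\le N$ forces $\max(|x|,|y|)\ll N^{1/d}$. Indeed, since $\Phi_n$-type positivity gives a lower bound $F_i(x,y)\gg\max(|x|,|y|)^d$ (uniformly on reals, as recalled in the introduction for cyclotomic forms, and true for any positive definite form of degree $d$), every integer $n$ counted by $R_{F_1,F_2}(N)$ arises from a common value $n=F_1(x_1,x_2)=F_2(x_3,x_4)$ with all four coordinates bounded by a constant multiple of $N^{1/d}$.

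First I would make this quantitative: set $B=c\,N^{1/d}$ for a suitable constant $c=c(F_1,F_2)$ chosen so that $|n|\le N$ and $n=F_i(x_{2i-1},x_{2i})$ together imply $\max_i|x_i|\le B$. Each integer $n$ counted by $R_{F_1,F_2}(N)$ then corresponds to at least one quadruple $(x_1,x_2,x_3,x_4)$ counted by $\mathcal N_{F_1,F_2}(B)$. Since distinct values of $n$ give disjoint sets of quadruples, the map from counted integers to (one chosen) counted quadruple is injective, whence
$$
R_{F_1,F_2}(N)\le \mathcal N_{F_1,F_2}(B).
$$

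Next I would invoke Theorem~\ref{thmF1=F2}, whose hypotheses are inherited verbatim by $F_1$ and $F_2$ here, to bound the right-hand side. With $B=c\,N^{1/d}$ one gets
$$
\mathcal N_{F_1,F_2}(B)=O\bigl(B^{d\eta_d+\varepsilon}\bigr)=O\bigl((c\,N^{1/d})^{d\eta_d+\varepsilon}\bigr)=O\bigl(N^{\eta_d+\varepsilon/d}\bigr).
$$
Absorbing the constant $c^{d\eta_d+\varepsilon}$ into the implied constant and renaming $\varepsilon/d$ as $\varepsilon$ (legitimate since $\varepsilon>0$ is arbitrary) yields exactly $R_{F_1,F_2}(N)\ll N^{\eta_d+\varepsilon}$, which is the claimed inequality.

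The only genuine point requiring care—and the step I would treat as the main obstacle—is establishing the uniform lower bound $F_i(x,y)\gg\max(|x|,|y|)^d$ that converts the value constraint into a box constraint. This is where positive definiteness is essential: a definite binary form of even degree $d$ has no real zeros on the unit circle, so by compactness $\min_{x^2+y^2=1}F_i(x,y)=:\delta_i>0$, and homogeneity then gives $F_i(x,y)\ge\delta_i\,(x^2+y^2)^{d/2}\gg\max(|x|,|y|)^d$. Everything else is bookkeeping: choosing $c$ explicitly as a function of $\delta_1,\delta_2$, verifying the injectivity of the value-to-quadruple correspondence, and tracking the $\varepsilon$ through the substitution $B=c\,N^{1/d}$.
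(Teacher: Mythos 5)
Your argument is correct and is essentially identical to the paper's own proof: both use positive definiteness to derive $F_i(x,y)\gg\max(|x|,|y|)^d$, conclude that a common value of absolute value at most $N$ forces $\max_i|x_i|\ll N^{1/d}$, and then substitute $B=O(N^{1/d})$ into the bound \eqref{daisy} of Theorem~\ref{thmF1=F2}. The compactness justification of the lower bound and the bookkeeping with $\varepsilon$ that you spell out are exactly the details the paper leaves implicit.
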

\begin{proof}[D\' emonstration du corollaire \ref{F1=F2}] 
Les hypoth\`eses impliquent les in\' egalit\' es $$\vert \, F_1 (x_1,x_2) \, \vert \gg \max (\vert x_1^d\vert , \vert x_2^d\vert )\text{ et } \vert \, F_2 (x_3,x_4)\, \vert \gg \max (\vert x_3^d \vert , \vert x_4^d\vert ),$$ uniform\' ement pour $x_i \in\mathbb R$.
Par cons\'equent, si on a 
$$
-N\leq F_1 (x_1,x_2) =F_2 (x_3,x_4) \leq N,
$$
on a alors 
$$ \max_{i=1, \, 2,\, 3,\, 4} \vert x_i\vert \ll N^\frac 1d.$$
Il suffit alors de remplacer $B$ par $O(N^\frac 1d)$ dans la majoration \eqref{daisy}.
\end{proof}

\subsection{Preuve du th\' eor\`eme \ref{thmF1=F2} } \label{preuve}
Puisque les formes $F_1$ et $F_2$ sont de discriminants non nuls, l'hypersurface $\mathbb X$ de $\mathbb P^3 (\mathbb C)$ 
d\' efinie par l'\' equation
\begin{equation}\label{defX}
\mathbb X : F_1(X_1, X_2)- F_2 (X_3, X_4) =0
\end{equation}
est lisse. Nous inspirant de \cite{SX}, nous d\' ecomposons $\mathcal N_{F_1,F_2} (B)$ en 
\begin{equation}\label{N=N1+N2}
\mathcal N_{F_1,F_2} (B) = \mathcal N_{F_1,F_2}^{(1)} (B) +\mathcal N_{F_1,F_2}^{(2)} (B) +1,
\end{equation}
o\`u 

\noindent $\bullet $ $\mathcal N_{F_1,F_2}^{(1)} (B)$ est le nombre de quadruplets non nuls $(x_1, x_2,x_3,x_4)$ de $\mathbb Z^4$, v\' erifiant $ \max \vert x_i\vert \leq B$,
et tels que le point projectif associ\'e $(x_1 : x_2: x_3 : x_4)$ appartienne \`a $\mathbb X$, mais ne se situe pas sur une droite (complexe) contenue dans $\mathbb X$,
\vskip .3cm
\noindent $\bullet $ $\mathcal N_{F_1,F_2}^{(2)} (B)$ est le nombre de quadruplets non nuls $(x_1, x_2,x_3,x_4)$ de $\mathbb Z^4$, v\' erifiant $ \max \vert x_i\vert \leq B$,
et tels que le point projectif associ\'e $(x_1 : x_2: x_3 : x_4)$ appartienne \`a une droite (complexe) contenue dans $\mathbb X$.

Nous prouverons d'abord la 
\begin{prop}\label{N1<<} Pour tout $\varepsilon >0$ et uniform\' ement pour $B\geq 1$, on a l'in\' egalit\' e
$$
 \mathcal N_{F_1,F_2}^{(1)} (B) \ll B^{d\eta_d+ \varepsilon}.
$$
\end{prop}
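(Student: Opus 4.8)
Le plan est de compter les quadruplets entiers consid\'er\'es comme des points rationnels de hauteur born\'ee sur la surface projective $\mathbb X\subset\P^3(\C)$ de degr\'e $d$ d\'efinie en \eqref{defX}, laquelle est lisse, donc g\'eom\'etriquement int\`egre. Si $P=(x_1:x_2:x_3:x_4)$ est le point projectif attach\'e \`a un quadruplet non nul v\'erifiant $\max_i|x_i|\le B$, sa hauteur est $\ll B$, et $\mathcal N_{F_1,F_2}^{(1)}(B)$ compte, \`a un facteur born\'e pr\`es, les points $P\in\mathbb X(\Q)$ de hauteur $\le B$ ne se situant sur aucune droite de $\mathbb X$. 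J'estimerais cette quantit\'e par la m\'ethode du d\'eterminant de Heath-Brown et Salberger, dans l'esprit de \cite{SX}.

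Je fixerais d'abord un degr\'e auxiliaire $D$, puis je regrouperais les points selon leur r\'eduction, soit $p$-adiquement en une place bien choisie, soit par des bo\^ites archim\'ediennes. Pour chaque groupe assez concentr\'e, un d\'eterminant d'\'evaluation des mon\^omes de degr\'e $D$ aux points, qui est entier et de valeur absolue strictement inf\'erieure \`a $1$, doit s'annuler ; il en r\'esulte une forme auxiliaire $G$ de degr\'e $D$, non divisible par $F_1-F_2$, s'annulant en tout point du groupe. Les points du groupe sont alors port\'es par la courbe $C=\mathbb X\cap\{G=0\}$, de degr\'e au plus $dD$. La lissit\'e et l'int\'egrit\'e g\'eom\'etrique de $\mathbb X$ interviennent pour majorer ce d\'eterminant et pour garantir que $G$ ne contienne pas $\mathbb X$. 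Le nombre de groupes requis pour recouvrir les points de hauteur $\le B$ est contr\^ol\'e par l'estimation globale du d\'eterminant et d\'ecro\^it avec $D$.

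Il resterait \`a majorer le nombre de nos points sur chaque courbe $C$. En d\'ecomposant $C$ en composantes g\'eom\'etriquement irr\'eductibles et en \'ecartant les composantes lin\'eaires (dont les points se trouvent sur des droites de $\mathbb X$ et ne comptent pas dans $\mathcal N_{F_1,F_2}^{(1)}$), on se ram\`ene \`a compter les points de hauteur $\le B$ sur une courbe projective g\'eom\'etriquement irr\'eductible de degr\'e $\delta\ge 2$ ; la majoration uniforme de type Bombieri--Pila et Heath-Brown donne $O_\varepsilon(B^{2/\delta+\varepsilon})$ pour chacune, le cas le plus d\'efavorable \'etant celui des coniques $\delta=2$, qui apportent $O(B^{1+\varepsilon})$. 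En sommant sur les composantes et sur les groupes, puis en optimisant le choix de $D$ (qui \'equilibre le nombre de courbes de recouvrement, croissant avec $B$, contre le comptage par courbe), on obtient l'exposant $d\eta_d$ de \eqref{defeta} ; de fait, pour $d\ge 4$, on a $d\eta_d=\max\bigl(1,\,1/2+9/(4\sqrt d)\bigr)$, la valeur $1$ correspondant au plancher impos\'e par les coniques et l'emportant pour $d\ge 21$. Le cas $d=3$, o\`u $\mathbb X$ est une surface cubique, requiert l'analyse sp\'ecifique propre aux cubiques et conduit \`a l'exposant distinct $3\eta_3$.

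La principale difficult\'e est l'optimisation fournissant l'exposant pr\'ecis. Il faut d'une part s'assurer que la forme auxiliaire $G$ ne contienne jamais $\mathbb X$, afin que $C$ soit une v\'eritable courbe : ceci repose sur la comparaison entre le nombre de mon\^omes de degr\'e $D$ et le nombre de points d'un groupe, donc en derni\`ere analyse sur le degr\'e et la lissit\'e de $\mathbb X$. Il faut d'autre part contr\^oler finement les composantes de petit degr\'e, au premier chef les coniques, dont la contribution $B^{1+\varepsilon}$ fixe le plancher de l'exposant final et gouverne l'\'equilibrage produisant la constante $9/4$. Un point secondaire \`a surveiller est l'excision propre des droites contenues dans $\mathbb X$ -- notamment les $d^2$ droites issues des factorisations de $F_1$ et de $F_2$ sur $\C$ -- dont les nombreux points rationnels ne doivent pas entrer dans $\mathcal N_{F_1,F_2}^{(1)}$.
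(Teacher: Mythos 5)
Votre d\'emarche co\"incide pour l'essentiel avec celle du texte : ramener le comptage des quadruplets entiers \`a celui des points rationnels de hauteur born\'ee sur la surface $\mathbb X$ de \eqref{defX}, hors de ses droites, puis invoquer la majoration de type Salberger pour les surfaces projectives g\'eom\'etriquement int\`egres de degr\'e $>2$. La diff\'erence est que le texte se contente de citer \cite[Theorem 0.1]{Sa} apr\`es avoir v\'erifi\'e l'int\'egrit\'e g\'eom\'etrique de $\mathbb X$, alors que vous esquissez la m\'ethode du d\'eterminant qui d\'emontre ce r\'esultat. Deux points demandent \`a \^etre corrig\'es ou compl\'et\'es.

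D'abord, la r\'eduction aux points projectifs n'est pas \og \`a un facteur born\'e pr\`es \fg\ : un point projectif de petite hauteur peut provenir d'environ $B$ quadruplets entiers de norme $\le B$ (les multiples par le pgcd $\delta$). Il faut \'ecrire $\mathcal N^{(1)}_{F_1,F_2}(B)\le \sum_{1\le\delta\le B} N^{(1)}(\mathbb X,B/\delta)$ et constater que la somme reste $\ll B^{d\eta_d+\varepsilon}$ parce que $d\eta_d\ge 1$ d'apr\`es \eqref{defeta} ; c'est exactement ce que fait le texte. Ensuite, votre esquisse de la m\'ethode du d\'eterminant n'\'etablit pas l'exposant : l'optimisation qui produit $d\eta_d=1/2+9/(4\sqrt d)$ pour $4\le d\le 20$, le seuil $d=21$ et le cas particulier $d=3$ constitue le contenu substantiel de \cite{Sa}, et l'affirmer \og en optimisant le choix de $D$ \fg\ laisse un trou b\'eant si l'on ne cite pas ce th\'eor\`eme. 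Enfin, \og lisse donc g\'eom\'etriquement int\`egre \fg\ m\'erite la ligne d'argument du texte : si $F_1-F_2$ se factorisait en $AB$ avec $\deg A,\deg B\ge 1$, les deux surfaces $A=0$ et $B=0$ se couperaient dans $\P^3(\C)$ et tout point d'intersection serait singulier sur $\mathbb X$, ce qui contredit la lissit\'e assur\'ee par la non--nullit\'e des discriminants de $F_1$ et $F_2$.
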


\noindent
Puis la 
\begin{prop}\label{N2<<}
Uniform\' ement pour $B\geq 1$, on a l'in\' egalit\' e
$$
 \mathcal N_{F_1,F_2}^{(2)} (B) \ll B.
 $$
\end{prop}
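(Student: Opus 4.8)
The plan is to exploit that the hypersurface $\mathbb{X}$ of \eqref{defX} is smooth of degree $d\ge 3$, and that a smooth surface of degree $\ge 3$ in $\mathbb{P}^3$ contains only finitely many complex lines. Granting this classical finiteness, it suffices to bound, for each line $L\subset\mathbb{X}$, the number of nonzero integer quadruples $(x_1,x_2,x_3,x_4)$ with $\max_i|x_i|\le B$ whose projective class lies on $L$, and then to sum over the finite set of lines (the implied constants being allowed to depend on $F_1$ and $F_2$). Writing $V_L\subset\mathbb{C}^4$ for the two-dimensional linear cone over $L$, each such quadruple lies in $V_L\cap\mathbb{Z}^4$, so everything reduces to controlling the rank of this lattice.

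The heart of the argument is to prove that \emph{$\mathbb{X}$ contains no line defined over $\mathbb{Q}$}. Suppose $L\subset\mathbb{X}$ were rational, spanned by primitive integer points, and parametrize it by $(s:t)\mapsto s\mathbf{p}+t\mathbf{q}$. Let $M_1,M_2$ be the $2\times 2$ rational matrices recording the $(X_1,X_2)$- and $(X_3,X_4)$-coordinates of $\mathbf{p},\mathbf{q}$, and put $G_i(s,t)=F_i(M_i\,(s,t)^{\mathsf T})$, so that $L\subset\mathbb{X}$ means exactly $G_1=G_2$. I would then split into cases according to $r_i=\mathrm{rank}\,M_i$. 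If $r_1=r_2=2$, then $G_1,G_2$ are $\mathrm{GL}(2,\mathbb{Q})$-equivalent to $F_1,F_2$, so $G_1=G_2$ would make $F_1$ and $F_2$ isomorphic in the sense of \eqref{isom}, contrary to hypothesis. If exactly one rank is $1$, say $r_2=1$, then $G_2$ is either $0$ or a constant times the $d$-th power of a linear form, whereas the nonzero discriminant of $F_1$ forces $G_1$ to be separable with $d\ge 3$ distinct roots, so $G_1=G_2$ is impossible. The last case $r_1=r_2=1$ forces $L$ to be a \emph{trivial} line $\{(X_1:X_2)=(a:b)\}\cap\{(X_3:X_4)=(a':b')\}$, on which $G_1=F_1(a,b)\,s^d$ and $G_2=F_2(a',b')\,t^d$; equality requires $F_1(a,b)=F_2(a',b')=0$, i.e.\ $(a:b)$ and $(a':b')$ are rational roots of $F_1$ and of $F_2$, which contradicts the hypothesis that at least one form has no linear factor over $\mathbb{Q}$. (The degenerate values $r_i=0$ are excluded, since they would make one $G_i$ vanish identically and hence the corresponding nonzero form vanish.)

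Once no rational line exists, the conclusion is immediate: a non-rational line carries at most one $\mathbb{Q}$-rational point, for two distinct rational points would span it; equivalently $V_L\cap\mathbb{Q}^4$, and thus $V_L\cap\mathbb{Z}^4$, has rank at most $1$. Every integer quadruple on $L$ therefore lies in a single rank-one lattice $\mathbb{Z}\mathbf{v}$ with $\mathbf{v}$ primitive, and those with $\max_i|x_i|\le B$ are the multiples $n\mathbf{v}$ with $|n|\ll B$, hence $\ll B$ in number. Summing over the finitely many lines of $\mathbb{X}$ yields $\mathcal N_{F_1,F_2}^{(2)}(B)\ll B$, uniformly for $B\ge 1$. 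I expect the main obstacle to be precisely the absence-of-rational-lines step, and within it the careful bookkeeping of the degenerate projections $r_i\in\{0,1\}$; the finiteness of the set of lines on a smooth surface of degree $\ge 3$ will simply be quoted as classical.
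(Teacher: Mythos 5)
Votre démonstration est correcte et son architecture globale coïncide avec celle du texte : on invoque la finitude de l'ensemble des droites contenues dans la surface lisse $\mathbb X$, puis on montre que chaque droite porte au plus $O(B)$ quadruplets entiers de hauteur $\le B$. Les deux étapes clefs sont toutefois réalisées différemment. Pour la structure des droites, le lemme \ref{lemme:droitesdansX} du texte part des deux équations linéaires définissant la droite et montre, par une discussion sur les paires de coefficients qui s'annulent, que toute droite de $\mathbb X$ contenant un point rationnel se met sous la forme normale \eqref{system1} avec un $u_i$ irrationnel ; vous paramétrez au contraire une hypothétique droite définie sur $\Q$ par deux points entiers et menez la discussion sur les rangs des deux projections $M_1,M_2$, en utilisant exactement les trois mêmes hypothèses (non-isomorphisme au sens de \eqref{isom} pour le rang $(2,2)$, discriminants non nuls pour les rangs mixtes, absence de facteur linéaire rationnel pour le rang $(1,1)$). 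Pour le comptage, le texte raisonne explicitement sur les relations de dépendance $\Q$-linéaire entre $1,u_1,u_2$, alors que vous observez de façon plus abstraite qu'une droite non définie sur $\Q$ contient au plus un point rationnel, de sorte que les points entiers du cône au-dessus de $L$ forment un réseau de rang $\le 1$. Votre argument de comptage est plus souple : il couvre sans discussion supplémentaire les droites du type $\{b_1X_1+b_2X_2=a_3X_3+a_4X_4=0\}$ dont une seule des deux formes linéaires est rationnelle (configuration possible lorsque l'une des formes $F_i$ est autorisée à avoir un facteur linéaire rationnel, et qui échappe à la forme normale \eqref{system1}) ; de telles droites n'ont qu'un point rationnel et ne portent donc que $O(B)$ quadruplets. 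Seule petite imprécision : dans votre parenthèse sur $r_i=0$, si $M_1=0$ la droite est $\{X_1=X_2=0\}$ et c'est $F_2$ (ou son éventuel facteur rationnel) qui est forcée de s'annuler, non pas $F_1$ ; la conclusion subsiste mais la formulation est à ajuster.
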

En combinant ces deux propositions et la formule \eqref{N=N1+N2} on compl\`ete la preuve du th\' eor\`eme \ref{thmF1=F2}.
\subsubsection{Preuve de la Proposition \ref{N1<<} } Si $\boldsymbol x = (x_1 : x_2 : x_3,:x_4)$ est un point de $\mathbb P^3 (\mathbb Q)$, on d\' esigne
par $h(\boldsymbol x)$ la hauteur de $\boldsymbol x$, c'est--\`a--dire le maximum des $\vert x_i\vert$, si $(x_1, x_2,x_3,x_4)$ est un quadruplet d'entiers premiers entre eux dans leur ensemble, repr\' esentant $\boldsymbol x$. Notons aussi
$
N^{(1)}(\mathbb X, B)$
le cardinal de l'ensemble des $\boldsymbol x$ de $\mathbb P^3 (\mathbb Q)$ appartenant \`a $\mathbb X$ mais non situ\' es sur une droite contenue dans $\mathbb X$
et de hauteur $h(\boldsymbol x)\leq B$. En d\' ecomposant suivant la valeur $\delta$ du pgcd de $(x_1,x_2,x_3,x_4)$ on d\' eduit l'in\'egalit\' e
\begin{equation}\label{sumoverdelta}
 \mathcal N_{F_1,F_2}^{(1)} (B)\leq \sum_{1\leq \delta \leq B} N^{(1)}(\mathbb X, B/\delta).
\end{equation}
Un r\' esultat de Salberger \cite[Theorem 0.1]{Sa}, donne l'in\' egalit\' e
$$
N^{(1)}(\mathbb Y, B)
\ll B^{d \eta_d+ \varepsilon}
$$
valable pour toute surface projective
$\mathbb Y \subset \mathbb P^n(\mathbb Q)$, g\' eom\' etriquement int\`egre 
de degr\' e $d >2$. La surface $\mathbb X$ v\' erifie cette propri\' et\' e. En effet supposons qu'il existe des polyn\^omes $A$ et $B$ de degr\' e $\geq 1$ tels que
$$
F_1(X_1, X_2)-F_2(X_3, X_4) = A( X_1, X_2, X_3, X_4) B (X_1, X_2, X_3, X_4).
$$
Calculant les d\' eriv\'ees partielles par rapport \`a chacun des $X_i$, on voit que tout point $(x_1 : x_2: x_3: x_4) $ de $\mathbb P^3 (\mathbb C)$
tel que
$$
A( x_1, x_2, x_3, x_4)= B( x_1, x_2, x_3, x_4) =0,
$$
est un point singulier de $\mathbb X$. Puisqu'on est dans $\mathbb P^3 (\mathbb C)$, les deux surfaces d'\'equation $A=0$ et $B=0$ ont une intersection non vide.
Ainsi $\mathbb X$ serait singuli\`ere, ce qui contredit la propri\' et\' e de lissit\' e \' enonc\' ee au \S \ref{preuve}.
 
 Il suffit de
 sommer sur $\delta < B$ l'in\' egalit\' e \eqref{sumoverdelta} pour compl\'eter la preuve de la Proposition \ref{N1<<}.
\subsubsection{Droites contenues dans $\mathbb X$ }

Afin de d\'emontrer la Proposition \ref{N2<<}, nous donnons des conditions n\' ecessaires pour qu'une droite projective de $\mathbb P^3 (\mathbb C)$ appartienne \`a $\mathbb X$ en exploitant le fait que dans l'\' equation \eqref{defX} d\'efinissant $\mathbb X$, les paires de variables $(X_1,X_2)$ et $(X_3,X_4)$ sont s\' epar\' ees.

\begin{lem}\label{lemme:droitesdansX}
Sous les hypoth\`eses du th\' eor\`eme \ref{thmF1=F2}, si une droite projective de $\mathbb P^3 (\mathbb C)$ appartient \`a l'hypersurface $\mathbb X$ d\'efinie par \eqref{defX} et contient un point rationnel, elle est d\'efinie par des \'equations
\begin{equation}\label{system1}
\begin{cases}
X_1&= u_1X_3+u_2X_4\\
X_2&=u_3 X_3 +u_4X_4,
\end{cases}
\end{equation}
o\`u l'un au moins des $u_i$ est irrationnel. 
\end{lem}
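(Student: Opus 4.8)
The plan is to parametrise the line, turn the condition $L\subset\mathbb{X}$ into an identity between two binary forms in a \emph{single} pair of variables, and then run a case analysis governed by the ranks of two auxiliary $2\times 2$ matrices. First I would write any line $L$ as the image of a parametrisation $(s:t)\mapsto(X_1:X_2:X_3:X_4)$ in which each $X_i$ is a linear form in $(s,t)$; grouping the coordinates, this amounts to two $2\times 2$ matrices $M$ and $M'$, where $(X_1,X_2)$ has coefficient matrix $M$ and $(X_3,X_4)$ has coefficient matrix $M'$, the full $4\times 2$ array having rank $2$ because $L$ is a genuine line. Restricting the equation \eqref{defX} to $L$, the inclusion $L\subset\mathbb{X}$ becomes the identity of degree-$d$ binary forms $F_1\circ M = F_2\circ M'$ in $(s,t)$. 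The key observation is that the shape \eqref{system1} holds \emph{exactly} when $M'$ is invertible, in which case one takes $U=MM'^{-1}$.

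The heart of the argument is a rank dichotomy for $(M,M')$, and this is the step I expect to be the main obstacle. Here the hypothesis of non-vanishing discriminants is decisive. If $M$ has rank $2$, then $F_1\circ M$ is, like $F_1$, squarefree with $d\ge 3$ distinct roots in $\mathbb{P}^1(\mathbb{C})$; whereas if $M'$ had rank at most $1$, then $F_2\circ M'$ would be either $0$ or a constant times a $d$-th power of a linear form, hence have at most one root. For $d\ge 3$ the two sides of $F_1\circ M=F_2\circ M'$ cannot then coincide. A block of rank $0$ is likewise impossible, since it would force $F_1$ or $F_2$ to vanish identically along a coordinate line, contradicting non-vanishing of the discriminant. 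Thus $M$ and $M'$ must have the \emph{same} rank, and only the cases $(2,2)$ and $(1,1)$ survive.

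In the case $(2,2)$ the matrix $M'$ is invertible, so $L$ has the form \eqref{system1} with $U=MM'^{-1}\in\mathrm{GL}(2,\mathbb{C})$, and the form identity reads $F_2=F_1\circ U$. Were all the entries $u_i$ rational, $U$ would lie in $\mathrm{GL}(2,\mathbb{Q})$ and, by the definition \eqref{isom}, the forms $F_1$ and $F_2$ would be isomorphic, contradicting the hypothesis of Theorem~\ref{thmF1=F2}. Hence at least one $u_i$ is irrational, which is the second assertion of the lemma.

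It remains to dispose of the case $(1,1)$, which is the delicate point. There the identity forces $M$ and $M'$ to have both form-values zero, so $L$ is the join of two points $(p_1:p_2:0:0)$ and $(0:0:p_3:p_4)$ with $F_1(p_1,p_2)=F_2(p_3,p_4)=0$, a ``cone line'' that is not of the shape \eqref{system1}. A short computation shows that a rational point on such a line forces $(p_1:p_2)$ or $(p_3:p_4)$ to be rational, i.e. a rational linear factor of $F_1$ or of $F_2$; this is exactly where the hypothesis that the relevant form carries no such factor must be invoked (in the cyclotomic application both forms are irreducible, so neither factor occurs, and the cone lines then meet no rational point at all). This confines every line of $\mathbb{X}$ through a rational point to the case $(2,2)$ already treated, completing the proof.
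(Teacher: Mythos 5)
Your proof is correct and reaches the conclusion by a genuinely different, dual route. The paper describes the line by two linear equations $a_1X_1+a_2X_2+a_3X_3+a_4X_4=b_1X_1+b_2X_2+b_3X_3+b_4X_4=0$ and eliminates cases according to the vanishing or proportionality of the coefficient pairs $(a_1,a_2),(a_3,a_4),(b_1,b_2),(b_3,b_4)$; you parametrise the line and classify by the ranks of the two blocks $M,M'$. The correspondence is exact: your mixed--rank cases $(2,1),(1,2),(2,0),(0,2)$ are the paper's situations where $F_1$ or $F_2$ would become a $d$-th power of a linear form (or vanish), excluded by the non-vanishing of the discriminants; your rank-$(1,1)$ \og cone lines\fg{} are the paper's lines \eqref{(00a3a4)}; and the rank-$(2,2)$ case is the paper's final step, with the same appeal to non-isomorphism over $\mathbb Q$ to force an irrational $u_i$. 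Your version buys one simplification: since $U=MM'^{-1}$ is automatically invertible, you avoid the paper's separate sub-case $\det U=0$ after \eqref{F2=F1u1}. One point should be made explicit, in your write-up exactly as in the paper's: in the cone-line case a rational point $(\lambda p_1:\lambda p_2:\mu p_3:\mu p_4)$ forces a rational linear factor of $F_1$ \emph{or} of $F_2$ according to which of $\lambda,\mu$ is nonzero, while the hypothesis only excludes such a factor for \emph{one} of the two forms. If $F_1$ has no rational linear factor but $F_2$ does, the line joining an irrational root of $F_1$ to a rational root of $F_2$ lies on $\mathbb X$, contains the single rational point $(0:0:p_3:p_4)$, and is not of the shape \eqref{system1}; your parenthetical about the cyclotomic application shows you sensed this. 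Such a line carries only $O(B)$ integral points, so the proposition \ref{N2<<} and everything downstream survive, but strictly speaking this residual case should either be ruled out by strengthening the hypothesis to both forms, or be listed in the conclusion of the lemma and absorbed in the counting; as it stands your argument shares this imprecision with the paper's rather than introducing a new one.
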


\begin{proof}[D\'emonstration]

Pour ${\bf a}= (a_1,a_2,a_3, a_4)$ et ${\bf b }= (b_1,b_2,b_3,b_4)$ deux quadruplets non nuls et non proportionnels de nombres complexes, on suppose que la droite projective $\mathbb D_{\bf a, \bf b}$ de $\mathbb P^3 (\mathbb C)$ d\' efinie par les \' equations
\begin{equation}\label{defDab}
\mathbb D_{\bf a, \bf b} \ :\ 
\begin{cases}
a_1X_1+a_2X_2+a_3X_3+a_4X_4 &=0\\
b_1X_1+b_2X_2+b_3X_3+b_4X_4 &=0
\end{cases}
\end{equation}
est contenue dans $\mathbb X$ et contient un point rationnel. 

\noindent $\bullet $ Si $a_1= a_2=0$ et $a_3 \not= 0$, on substitue $X_3 = -(a_4/a_3) X_4$ dans la deuxi\`eme \' equation de \eqref{defDab} qui devient ainsi

\begin{equation}\label{b1X1+b2X2=}
b_1 X_1+b_2 X_2 = \frac{a_4b_3 -a_3b_4}{a_3} X_4.
\end{equation}

$\diamond$ Si $a_4b_3-a_3 b_4 =0$, cela signifie que $\mathbb X$ contient une droite de la forme 
\begin{equation}
\label{(00a3a4)}
\begin{cases} a_3X_3 +a_4 X_4= 0\\
b_1X_1 +b_2 X_2 =0.
\end{cases}
\end{equation}
Exploitant la forme particuli\`ere de l'\' equation \eqref{defX} d\' efinissant $\mathbb X$, on d\'eduit 
$$
 b_1 X_1 +b_2 X_2 \text{ divise } F_1 (X_1, X_2) \text{ et } a_3X_3 +a_4 X_4 \text{ divise } F_2 (X_3, X_4).
$$
Compte tenu de l'hypoth\`ese sur les $F_i$, ces conditions de divisibilit\' e contredisent l'hypoth\`ese qu'il y a un point rationnel sur la droite d\'efinie par \eqref{(00a3a4)}.

$\diamond$ Si $a_4b_3-a_3b_4 \not= 0$, on remplace $X_3$ par la valeur donn\' ee par la premi\`ere \' equation de \eqref{defDab} et $X_4$ par la valeur donn\' ee en \eqref{b1X1+b2X2=} conduisant \`a
$$
F_1(X_1,X_2) = F_2(X_3,X_4) =(a_4b_3-a_3b_4)^{-d} F_2 (-a_4, a_3) (b_1X_1+b_2X_2)^d,
$$
ce qui contredit l'hypoth\`ese que le discriminant de $F_1$ est non nul.

\noindent $\bullet $ Si $a_1= a_2=0$ et $a_4 \not= 0$, par le m\^eme type de raisonnement suivi pr\' ec\' edemment, on est ramen\' e au cas o\`u $(a_1,a_2)\not= (0,0)$.

\noindent $\bullet$ Par sym\'etrie, on suit le m\^eme raisonnement dans les trois cas suivants : si $a_3=a_4=0$, si $b_1=b_2=0$ ou si $b_3=b_4 =0$.

\noindent $\bullet$ En conclusion de la discussion pr\' ec\' edente, nous avons prouv\' e que si $\mathbb D_{\bf a, \bf b}$, contenue dans $\mathbb X$, poss\`ede un point rationnel, on a n\' ecessairement 
\begin{equation}\label{aibinot=0}
(a_1, a_2), \ (a_3,a_4), \ (b_1, b_2), \text{ et } (b_3,\, b_4) \text{ sont } \not=(0,0).
\end{equation}
\noindent $\bullet$ Supposons maintenant $a_1b_2= a_2 b_1$ et $a_1\not= 0$. Multipliant la premi\`ere \' equation de \eqref{defDab} par $-b_1$ et la seconde par $a_1$, on obtient que dans ce cas le syst\`eme 
d'\'equations d\'efinissant $\mathbb D_{\bf a, \bf b}$ est \' equivalent \`a
$$
\begin{cases}
a_1X_1+a_2X_2+a_3X_3+a_4X_4 &=0\\
 (a_1b_3-a_3b_1)X_3+(a_1b_4-a_4 b_1)X_4 &=0.
\end{cases}
$$
Or \eqref{aibinot=0} a \'elimin\'e le cas $(b_1,b_2)=(0,0)$. On en d\'eduit que l'hypoth\`ese $a_1b_2= a_2 b_1$ et $a_1\not= 0$ est impossible. 

\noindent $\bullet$ Supposons maintenant $a_1b_2= a_2 b_1$ et $a_2\not= 0$. Mais ce cas est impossible par un raisonnement identique. Puisque par \eqref{aibinot=0} le cas $(a_1, a_2) =(0,0) $ est interdit,
on est ramen\' e \`a supposer que $a_1b_2\not= a_2b_1.$ 

\noindent $\bullet $ Pour finir on suppose donc que $a_1b_2\not= a_2b_1$. Par r\' esolution d'un syst\`eme (2,2) en les inconnues $X_1$ et $X_2$ et de d\' eterminant non nul,
on voit que le syst\`eme \eqref{defDab} est \'equivalent \`a
\begin{equation*} 
\begin{cases}
X_1&= u_1X_3+u_2X_4\\
X_2&=u_3 X_3 +u_4X_4,
\end{cases}
\end{equation*}
o\`u les $u_i$ sont des nombres complexes.
On a donc l'\' egalit\' e
\begin{equation}\label{F2=F1u1}
F_2 (X_3, X_4) = F_1 (u_1 X_3+ u_2 X_4, u_3X_3+u_4X_4).
\end{equation}

$\diamond$ Si $\det \begin{pmatrix} u_1& u_2\\ u_3 &u_4\end{pmatrix}=0$, cela signifie que par exemple, on a, pour un certain $\lambda$ complexe, l'\' egalit\' e $ u_1X_3+u_2X_4 = \lambda (u_3 X_3+u_4X_4)$ 
donc en reportant, on d\'eduit l'\' egalit\' e
$$
F_2 (X_3, X_4) = (u_3 X_3+u_4X_4)^d F_1 (\lambda, 1),
$$
ce qui contredit l'hypoth\`ese de non nullit\' e du discriminant de $F_2$.

$\diamond$ Si $\det \begin{pmatrix} u_1& u_2\\ u_3 &u_4\end{pmatrix}\not= 0$, par \eqref{F2=F1u1} on voit que les formes $F_1$ et $F_2$ sont isomorphes par un changement de variables
lin\'eaires \`a coefficients complexes. L'hypoth\`ese de non isomorphisme, sur ${\rm Gl}(2,\mathbb Q)$, de $F_1$ et $F_2$ implique que parmi les $u_i$
l'un au moins est irrationnel. 

Ceci termine la d\'emonstration du lemme \ref{lemme:droitesdansX}.
\end{proof}
\subsubsection{Preuve de la proposition \ref{N2<<}}
On sait que pour tout $d\geq 3$, il existe un entier $\ell (d)$, tel que, toute surface lisse de $ \mathbb P^3 (\mathbb C)$ de degr\' e $d$ contient au plus $\ell (d)$ droites. Pour des \'etudes fines concernant cette constante $\ell (d)$ on se reportera \`a \cite{Se} et \`a \cite{BoSa} par exemple.

 Gr\^ace au lemme \ref{lemme:droitesdansX}, on est ramen\' e \`a d\' enombrer l'ensemble des quadruplets d'entiers $(x_1,\dots, x_4)$ avec $\max \vert x_i\vert \leq B$, v\' erifiant 
 \begin{equation*}
\begin{cases}
x_1&= u_1x_3+u_2x_4\\
x_2&=u_3 x_3 +u_4x_4,
\end{cases}
\end{equation*}
sachant que l'un au moins des $u_i$ est irrationnel. Disons que c'est $u_1$. 

-- si $\dim_{\mathbb Q} (1, u_1, u_2) =3$, la seule solution en $(x_1, x_3, x_4)\in \mathbb Z^3$ de l'\' equation $x_1=u_1x_3+u_2x_4$ est $(0,0,0)$,

-- si $\dim_{\mathbb Q} (1, u_1, u_2) =2$, on exprime $u_2= a +b u_1$, avec $a$ et $b$ rationnels et on est ramen\'e \`a l'\' equation
$$
x_1-ax_4= (x_3+bx_4) u_1,
$$
qui implique $x_1-ax_4=x_3+bx_4=0$. Donc le syst\`eme \eqref{system1} admet $O (B)$ quadruplets $(x_1,\dots, x_4)$ solutions de hauteur inf\' erieure \`a $B$. 

Ceci termine la preuve de la proposition \ref{N2<<}.

\section{Quelques propri\' et\' es des formes cyclotomiques}
Dans ce paragraphe nous prouvons quelques r\' esultats g\' en\' eraux concernant les formes cyclotomiques $\Phi_n (X,Y)$ dont le degr\' e est $d=\varphi (n).$ 
Ces divers r\' esultats seront n\' ecessaires lors de la preuve des th\' eor\`emes \ref{thmcentral}, \ref{thmcentral2}, \ref{Thm:ovalecyclotomique} et \ref{thmcentral3}.  
En particulier les r\'esultats des sections \S \ref{confinemodp}, \S \ref{confinemod9} et \S \ref{confinemod4}. ne seront utilis\' es que pour la preuve du Th\' eor\`eme \ref{thmcentral3}. 
Nous rappelons d'abord
plusieurs formules classiques sur les $\Phi_n (X,Y)$. Ces formules ne sont que la version homog\`ene des formules correspondantes sur les $\phi_n (x)$.

Nous rappelons certaines notations et conventions : si $n\geq 1$ est un entier, on d\' esigne par $\mu (n)$ la valeur de la fonction de M\" obius, $\omega (n)$ est le nombre de
facteurs premiers distincts de $n$, $\kappa (n)$ est le {\it radical } de $n$, c'est--\`a--dire le produit de 
tous les premiers divisant $n$, $d(n)$ le nombre de diviseurs. On dit que deux nombres rationnels $u$ et $v$ sont congrus modulo le nombre premier $p$
si on a $u-v\in p\mathbb Z_p$, o\`u $\mathbb Z_p$ est l'anneau des entiers $p$--adiques.

Si $n\geq 2$ est factoris\' e en $n:=p^r m$, avec $p\nmid m$ et $r \geq 1$, 
la forme $\Phi_n$ v\' erifie les identit\' es suivantes 
\begin{equation*} 
\Phi_n(X,Y) =\prod_{d \mid n} (X^d-Y^d)^{\mu (n/d)},
\end{equation*}
\begin{equation}\label{list2}
\Phi_n(X,Y) =\frac{\Phi_m(X^{p^r}, Y^{p^r})}{\Phi_m (X^{p^{r-1}},Y^{p^{r-1}})},
\end{equation}
et
\begin{equation*} 
\Phi_n (X,Y) = \Phi_{pm}(X^{p^{r-1}}, Y^{p^{r-1}}).
\end{equation*}
Par it\' eration de cette derni\`ere formule, on parvient \`a
\begin{equation}\label{list4}
\Phi_n (X,Y) = \Phi_{\kappa (n)} (X^{n/\kappa (n)}, Y^{n/\kappa (n)}).
\end{equation}
Nous rappelons quelques valeurs de $\phi_n$ en certains points :
\begin{equation}\label{phin(1)=}
\phi_n (1)=
\begin{cases}
0&\text{ si } n=1,\\
p& \text{ si } n=p^k\ (k\geq 1),\\
1& \text { si } \omega (n) \geq 2,
\end{cases}
\end{equation}
et 
\begin{equation}\label{phin(-1)=}
\phi_n (-1)=
\begin{cases}
-2&\text{ si } n=1,\\
\phi_{n/2} (1) &\text{ si } n\geq 2, \, 2 \Vert n,\\
1& \text { si } n\geq 3, \, 2\nmid n,\\
1&\text{ si } n\geq 4, \, 4\vert n, \, n\not= 2^\ell,\\
2&\text{ si } n\geq 4, \, n=2^\ell.
\end{cases}
\end{equation}
Pour majorer les coefficients de $\Phi_n$ nous utiliserons le r\' esultat suivant, d\^u \`a P. Bateman \cite[p.1181]{Ba}.
Quand $P$ est un polyn\^ome, nous d\'esignons par $\rmL(P)$ ({\it longueur de $P$}) la somme des valeurs absolues des coefficients de $P$.

\begin{lem} 
Pour tout $n\geq 1$, on a 
$$
\rmL(\phi_n)\le 
n^{d(n)/2}.
$$ 
\end{lem}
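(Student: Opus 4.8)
The first step is to reduce to the case where $n$ is squarefree. Writing $\kappa=\kappa(n)$ for the radical, the identity \eqref{list4} gives $\phi_n(x)=\phi_\kappa(x^{n/\kappa})$; the substitution $x\mapsto x^{n/\kappa}$ merely relocates the coefficients of $\phi_\kappa$ without altering their multiset, so $\rmL(\phi_n)=\rmL(\phi_\kappa)$. Since $\kappa\mid n$ we have $\kappa\le n$ and $d(\kappa)\le d(n)$, whence $\kappa^{d(\kappa)/2}\le n^{d(n)/2}$. It therefore suffices to establish the bound for squarefree $n$ (the finitely many small cases being checked by hand), and from now on I take $n$ squarefree with $\omega(n)=r$, so that $d(n)=2^r$ and the target becomes $\rmL(\phi_n)\le n^{2^{r-1}}$.

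The second step produces a nonnegative majorant. For $n\ge 2$ one has $\phi_n(x)=\prod_{d\mid n}(1-x^d)^{\mu(n/d)}$, and since the divisors $d$ with $\mu(n/d)=1$ and those with $\mu(n/d)=-1$ are equal in number ($2^{r-1}$ of each), I expand the reciprocal factors as power series and set
\[
H(x)=\prod_{\mu(n/d)=1}(1+x^d)\ \prod_{\mu(n/d)=-1}\frac{1}{1-x^d},
\]
a formal power series with nonnegative coefficients. Comparing $\phi_n$ with the corresponding signed product term by term (triangle inequality on the convolution), each coefficient of the polynomial $\phi_n$ is bounded in absolute value by the matching coefficient of $H$; as $\deg\phi_n=\varphi(n)$, summing yields $\rmL(\phi_n)\le\sum_{0\le j\le\varphi(n)}[H]_j$, where $[H]_j$ denotes the $j$-th coefficient.

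The final step is to estimate this truncated coefficient sum, and this is where the real work lies. The quantity $\sum_{j\le\varphi(n)}[H]_j$ counts the lattice points $(\varepsilon_d)_{\mu(n/d)=1}\in\{0,1\}^{2^{r-1}}$ together with $(k_d)_{\mu(n/d)=-1}\in\Z_{\ge0}^{2^{r-1}}$ subject to the constraint $\sum_{\mu(n/d)=1}\varepsilon_d\,d+\sum_{\mu(n/d)=-1}k_d\,d\le\varphi(n)$. One may either bound this count directly, exploiting that each free variable satisfies $k_d\le\varphi(n)/d$, which produces $2^{r-1}$ factors each of size $O(n)$ and hence the exponent $2^{r-1}=d(n)/2$; or one may apply the Rankin-type inequality $\sum_{j\le\varphi(n)}[H]_j\le t^{-\varphi(n)}H(t)$, valid for $0<t<1$ because the coefficients of $H$ are nonnegative, and optimise in $t$ near $1$, the choice $1-t\asymp 2^{r-1}/\varphi(n)$ recovering $n^{d(n)/2}$ up to a constant.

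The main obstacle is precisely pinning down the constant so as to obtain $n^{d(n)/2}$ rather than merely $C\,n^{d(n)/2}$: the bound is sharp, since for $n=p$ prime one has $\rmL(\phi_p)=p=p^{d(p)/2}$ with equality. In particular the degree restriction $j\le\varphi(n)$ cannot be discarded, for dropping it would reinstate the binary freedom of the $\varepsilon_d$ and already overshoot for $n=p$. Controlling the interaction between this truncation and the combinatorial count is the heart of Bateman's argument \cite{Ba}, and is what makes the seemingly generous estimate come out tight at the primes.
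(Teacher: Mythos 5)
The paper does not prove this lemma at all: it is quoted verbatim from Bateman \cite[p.~1181]{Ba}, so there is no internal argument to compare yours with; your proposal has to stand on its own, and as written it does not. Your first two steps are correct and are the standard opening moves: the reduction to squarefree $n$ via $\phi_n(x)=\phi_{\kappa(n)}(x^{n/\kappa(n)})$ (which preserves $\rmL$ and only shrinks $n^{d(n)/2}$), and the domination of the coefficients of $\prod_{d\mid n}(1-x^d)^{\mu(n/d)}$ by those of $H(x)=\prod_{\mu(n/d)=1}(1+x^d)\prod_{\mu(n/d)=-1}(1-x^d)^{-1}$. The gap is the entire third step: the lemma is the inequality $\rmL(\phi_n)\le n^{d(n)/2}$, and neither of the two routes you sketch establishes it. The direct count with \og $2^{r-1}$ factors each of size $O(n)$\fg\ and the Rankin bound $t^{-\varphi(n)}H(t)$ are both only claimed to give $n^{d(n)/2}$ \og up to a constant\fg, i.e.\ something like $(Cn)^{d(n)/2}$, and you then state explicitly that removing the constant \og is the heart of Bateman's argument\fg\ without supplying that argument. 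A proof that ends by naming the step it cannot carry out is a citation, not a proof; the stated inequality is therefore not established.

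Two further remarks. First, your own count is lossier than it needs to be, and tightening it would nearly close the gap: the number of $(k_d)$ with $\sum k_d d\le\varphi(n)$ is at most $\prod_{\mu(n/d)=-1}(1+\varphi(n)/d)\le (2n)^{d(n)/2}\bigl/\prod_{\mu(n/d)=-1}d$, and for squarefree $n$ with $\omega(n)\ge 2$ one has $\prod_{\mu(n/d)=-1}d=n^{d(n)/4}$ (because $\sum_{d\mid n}\mu(n/d)\log d=\Lambda(n)=0$ and $\prod_{d\mid n}d=n^{d(n)/2}$). Keeping this denominator instead of bounding each factor by $O(n)$ yields $\rmL(\phi_n)\le (4n)^{d(n)/2}\,n^{-d(n)/4}$, which is below $n^{d(n)/2}$ as soon as $n\ge 16$; the remaining cases are $n\in\{6,10,14,15\}$ and the primes, where the count is exact and gives equality $\rmL(\phi_p)=p$, as you observed. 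That computation, not the general setup, is the actual content of the lemma, and it is what is missing from your write-up. Second, for everything this paper does with the lemma — namely the consequence \eqref{equation:Bateman} — any bound of the shape $(Cn)^{d(n)}$ is amply sufficient, so your argument would support the application even though it does not prove the lemma as stated.
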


 Les majorations classiques des fonctions arithm\'etiques $d(n)$ et $\varphi(n)$ impliquent alors que pour tout $\varepsilon>0$ et pour $n$ suffisamment grand, on a 
 \begin{equation}\label{equation:Bateman}
 \varphi(n)\rmL(\phi_n)\le e^{n^\varepsilon}.
\end{equation}

\subsection{Propri\' et\' es de confinement modulo $p$}\label{confinemodp} Nous prouvons que pour tout $a$ et $b$ entiers $\Phi_n (a,b)$ est, pour tout $m$ divisant $n$, restreint \`a quelques classes de congruence modulo $m$.
\begin{prop}\label{prop:confinmodp} Soient $n\geq 2$ et $p$ un premier divisant $n$. Alors pour tout $a$ et $b$ de $\mathbb Z$ on a 
\begin{equation}\label{???}
\Phi_n (a,b) \equiv 0,\ 1\bmod p.
\end{equation}
\end{prop}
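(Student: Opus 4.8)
The plan is to show that, modulo $p$, the form $\Phi_n$ is congruent to a $(p-1)$-th power of a form with integer coefficients; once this is done the conclusion is immediate from Fermat's little theorem, since the $(p-1)$-th power of any integer is $\equiv 0$ or $1\pmod p$. Write $n=p^r m$ with $p\nmid m$ and $r\ge 1$. The identity $\Phi_n(X,Y)=\Phi_{pm}\bigl(X^{p^{r-1}},Y^{p^{r-1}}\bigr)$ recalled just before \eqref{list4} reduces everything to controlling $\Phi_{pm}$ modulo $p$, so the heart of the matter is the case $r=1$.

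First I would establish the polynomial congruence
\begin{equation*}
\Phi_{pm}(X,Y)\equiv \Phi_m(X,Y)^{p-1}\pmod p .
\end{equation*}
To do so, apply \eqref{list2} with $pm$ in place of $n$ (i.e. $r=1$), which gives the exact identity $\Phi_{pm}(X,Y)\,\Phi_m(X,Y)=\Phi_m(X^p,Y^p)$ in $\Z[X,Y]$. On the other hand, the Frobenius congruence $P(X^p,Y^p)\equiv P(X,Y)^p\pmod p$, valid for every $P\in\Z[X,Y]$ (expand the $p$-th power, discard the non-diagonal multinomial coefficients, and use $c^p\equiv c\pmod p$ on the coefficients), applied to $P=\Phi_m$ yields $\Phi_m(X^p,Y^p)\equiv \Phi_m(X,Y)^p\pmod p$. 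Combining the two gives $\Phi_m\cdot\bigl(\Phi_{pm}-\Phi_m^{\,p-1}\bigr)\equiv 0$ in $\F_p[X,Y]$; since $\Phi_m$ is nonzero modulo $p$ (its coefficient of $X^{\varphi(m)}$ equals $1$, as $\phi_m$ is monic) and $\F_p[X,Y]$ is an integral domain, I may cancel $\Phi_m$ to obtain the claimed congruence.

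Substituting $X\mapsto X^{p^{r-1}}$, $Y\mapsto Y^{p^{r-1}}$ and using the reduction of the first paragraph gives
\begin{equation*}
\Phi_n(X,Y)\equiv \Phi_m\!\left(X^{p^{r-1}},Y^{p^{r-1}}\right)^{p-1}\pmod p .
\end{equation*}
Finally, evaluating at $(a,b)\in\Z^2$ — evaluation and reduction modulo $p$ being commuting ring homomorphisms — I obtain $\Phi_n(a,b)\equiv q^{p-1}\pmod p$ with $q=\Phi_m\bigl(a^{p^{r-1}},b^{p^{r-1}}\bigr)\in\Z$. By Fermat's little theorem $q^{p-1}\equiv 1\pmod p$ if $p\nmid q$ and $q^{p-1}\equiv 0\pmod p$ if $p\mid q$, which is exactly \eqref{???}.

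I expect the only genuinely delicate point to be the passage from the divided form $\Phi_{pm}=\Phi_m(X^p,Y^p)/\Phi_m(X,Y)$ to the congruence $\Phi_{pm}\equiv\Phi_m^{\,p-1}$: one must argue at the level of polynomials, cancelling $\Phi_m$ in the domain $\F_p[X,Y]$, before specializing to $(a,b)$, rather than trying to read the congruence off the numerical relation $\Phi_n(a,b)\,\Phi_m(a,b)=\Phi_m(a^p,b^p)$, which degenerates precisely when $p\mid\Phi_m(a,b)$ and then no longer determines $\Phi_n(a,b)$ modulo $p$. Everything else is a routine Frobenius-plus-Fermat computation.
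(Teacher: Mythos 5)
Your proof is correct, and it takes a genuinely different route from the paper's. The paper first reduces to squarefree $n=pm$ via \eqref{list4}, then expands $\Phi_n(a,b)=\prod_{m_1\mid m}\Phi_p(a^{m_1},b^{m_1})^{\mu(m/m_1)}$ and runs a case analysis on whether $a^{m_1}\equiv b^{m_1}\bmod p$, governed by the order of $a/b$ modulo $p$; this forces it to work with congruences between rational numbers (in $\Z_p$) and to prove an auxiliary lemma giving $\Phi_p(a,b)\equiv pa^{p-1}\bmod p^2$ when $a\equiv b$, precisely to control the quotients of terms divisible by $p$. You instead prove the single polynomial congruence $\Phi_{pm}\equiv\Phi_m^{\,p-1}\bmod p$ in $\F_p[X,Y]$ by combining the exact identity $\Phi_{pm}\Phi_m=\Phi_m(X^p,Y^p)$ with the Frobenius congruence and cancelling the nonzero (monic in $X$) factor $\Phi_m$ in the integral domain $\F_p[X,Y]$, after which Fermat's little theorem finishes the job. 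Your one flagged delicate point — that the cancellation must happen at the level of polynomials, not of the numerical identity, which degenerates when $p\mid\Phi_m(a,b)$ — is exactly right, and it is the step that replaces the paper's mod-$p^2$ lemma. All the identities you invoke are available in the paper (\eqref{list2} with $r=1$ and the iterated formula $\Phi_n(X,Y)=\Phi_{pm}(X^{p^{r-1}},Y^{p^{r-1}})$), and the edge cases $m=1$ and $p=2$ go through. Your argument is shorter and avoids both the Möbius bookkeeping and the $p$-adic conventions; the only thing the paper's version buys in exchange is the finer mod-$p^2$ information of its Lemma~\ref{defini1}, which is not needed for the proposition itself.
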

\begin{proof}[D\' emonstration] Remarquons d'abord que lorsque $p=2$ ou lorsque $b\equiv 0\bmod p$, l'\' enonc\' e pr\' ec\'edent est trivial. Enfin on peut se restreindre au cas 
$$
n \text { sans facteur carr\'e.}
$$
 C'est une cons\' equence de l'inclusion des images 
$\Phi_n (\mathbb Z, \mathbb Z) \subset \Phi_{\kappa (n)} (\mathbb Z, \mathbb Z)$, qui se d\' eduit directement de \eqref{list4}. Commen\c cons par le cas o\`u $n$ est un nombre premier.
On a 
\begin{lem}\label{defini1}
Soient $p\geq 3$ un nombre premier, $a$ et $b$ deux entiers. Alors on a les congruences 
\begin{enumerate}
\item Si $a\not\equiv b \bmod p$, on a
$$
\Phi_p (a,b) \equiv 1 \bmod p,
$$
\item Si $a\equiv b \bmod p, $ on a 
$$
\Phi_p(a,b) \equiv p a^{p-1} \bmod p^2.
$$

 \end{enumerate}

\end{lem}
\begin{proof}[D\' emonstration du lemme \ref{defini1}] C'est une cons\'equence de la formule $a^p\equiv a \bmod p$. Dans le premier cas, 
on \' ecrit $$\Phi_p (a,b) = (a^p -b^p)/ (a-b) \equiv (a-b)/(a-b) \equiv 1 \bmod p.$$
Dans le second cas, on \' ecrit
$$
\Phi_p (a, b) = a^{p-1} + a^{p-2} b + \cdots + ab^{p-2}+ b^{p-1}, 
$$
on pose $b=a+pt$ avec $t$ entier et on d\' eveloppe suivant la formule du bin\^ome pour obtenir le r\' esultat.
\end{proof}
Poursuivons la d\'emonstration de la proposition \ref{prop:confinmodp}.
On suppose donc que $n$ est sans facteur carr\' e et on pose $$n=pm=pp_2\cdots p_t.$$ Par it\' eration de \eqref{list2}, on a l'\' egalit\' e
\begin{equation}\label{Phin=}
\Phi_n(a,b) = \prod_{m_1 \mid m} \Phi_p (a^{m_1}, b^{m_1})^{\mu (m/m_1)}.
\end{equation}
Ceci nous am\`ene \`a d\' ecomposer le produit \`a droite de l'\'egalit\' e \eqref{Phin=} en 
$$
\Phi_n(a,b) =
\Phi_n^\dag (a,b) \tilde \Phi_n(a,b),
$$
o\`u $\Phi^\dag_n$ correspond \`a la condition $a^{m_1}\not\equiv b^{m_1} \bmod p$ et $\tilde \Phi$ le produit compl\'ementaire.
Par le lemme \ref{defini1}, on a $\Phi_p(a^{m_1}, b^{m_1}) \equiv 1 \bmod p$ si et seulement si $a^{m_1} \not \equiv b^{m_1} \bmod~p.$
Ceci implique que $\Phi_n^\dag (a,b) $ est un nombre rationnel qui est produit et quotient d'entiers congrus \`a $1 \bmod p$. C'est donc un nombre rationnel congru \`a $1 \bmod p$. 

Par d\' efinition, on a l'\' egalit\' e 
\begin{equation}\label{deftildePhi}
 \tilde \Phi_n (a,b) := \prod_{\substack{m_1 \mid m\\ (a/b)^{m_1}\equiv 1 \bmod p} }\Phi_p (a^{m_1}, b^{m_1})^{\mu (m/m_1)}. 
\end{equation}
D'apr\`es ce qui pr\' ec\`ede, pour compl\' eter la preuve de la congruence \eqref{???}, il reste \`a prouver que $\tilde \Phi_n (a,b)$ est un rationnel congru \`a $0$ ou $1\bmod p$.

 Soit $\ell$ l'ordre de $(a/b)$ modulo $p$. Ainsi $\ell$ divise $p-1$ et le produit apparaissant dans \eqref{deftildePhi} est sur les $m_1$ tels que
 $$
 \ell \text{ divise } m_1 \text{ et } m_1 \text{ divise } m.
 $$
 Ce produit est vide lorsque $\ell \nmid m$; c'est par exemple le cas si $a\not\equiv b \bmod p$ et si $(p-1, m) =1$. On suppose donc que $\ell \mid m$. 
 Quitte \`a r\' eindicer les $p_j$, on peut supposer que $\ell$ est de la forme
 $$
 \ell =p_2\cdots p_r,
 $$
 avec $1\leq r\leq t$, avec la convention que $\ell =1$ si $r=1$. Posant alors $h= p_{r+1}\cdots p_t = m/\ell$ et $m_1=\ell m_2$, on r\' ecrit la d\' efinition \eqref{deftildePhi} comme
 \begin{equation}\label{deftildePhibis}
 \tilde \Phi_n (a,b) := \prod_{ m_2\mid h }\Phi_p (a^{\ell m_2}, b^{\ell m_2})^{\mu (h/m_2)}. 
\end{equation}

 On articule la discussion suivant plusieurs cas.

 \hskip 1cm $\bullet$ si $h=1$, le produit apparaissant dans \eqref{deftildePhibis} ne contient qu'un seul terme \`a savoir $\Phi_p (a^\ell, b^\ell)$. Il est congru \`a $0 \bmod p,$ d'apr\`es le lemme \ref{defini1}.2.
 
 \vskip .3cm
 
 \hskip 1cm $\bullet$ si $h\not= 1$, en utilisant la formule de M\" obius, on \' ecrit \eqref{deftildePhibis} sous la forme
 $$
 \tilde \Phi_n (a,b) = \prod_{ m_2\mid h }\Bigl(\frac{\Phi_p (a^{\ell m_2}, b^{\ell m_2})}{p}\Bigr)^{\mu (h/m_2)}, 
 $$
 o\`u maintenant chaque fraction du produit est congrue \`a $a^{\ell m_2 (p-1)}$ modulo $p$ (lemme \ref{defini1}.2). Ainsi chacune de ces fractions est 
 un entier premier \`a $p$. Le nombre rationnel $ \tilde \Phi_n (a,b)$ v\'erifie donc
 $$
 \tilde \Phi_n (a,b)\equiv a^{\ell m_2 (p-1)\sum_{m_2\mid h} \mu (h/m_2)}\equiv 1 \bmod p,
 $$
 en utilisant de nouveau la formule de M\" obius. Ceci termine la preuve de la proposition \ref{prop:confinmodp}.
\end{proof}

\subsection{Propri\'et\' es de confinement modulo $9$}\label{confinemod9}
Par la proposition \ref{prop:confinmodp}, on sait que si $3\mid n$, on a $\Phi_n (a,b)\equiv 0,\, 1 \bmod 3$. Ce n'est pas assez satisfaisant pour la future application. Nous utiliserons une version plus pr\' ecise avec 
 la
\begin{prop}\label{prop:confinmod9} Soit $k\geq 2$. Alors pour tout $a$ et tout $b$ entiers, on a la congruence
$$
\Phi_{3^k}(a,b) \equiv 0,\, 1,\ 3\bmod 9.
$$
\end{prop}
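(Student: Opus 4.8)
Le plan est de ramener le calcul \`a la forme quadratique $\Phi_3$ et d'exploiter le confinement des cubes modulo $9$. D'abord, j'appliquerais la formule \eqref{list4} \`a $n=3^k$, dont le radical vaut $\kappa(3^k)=3$, pour obtenir
$$
\Phi_{3^k}(a,b) = \Phi_3\bigl(a^{3^{k-1}}, b^{3^{k-1}}\bigr) = A^2 + AB + B^2,
$$
o\`u l'on pose $A = a^{3^{k-1}}$ et $B = b^{3^{k-1}}$, en se souvenant que $\Phi_3(X,Y)=X^2+XY+Y^2$. De la sorte, la congruence \`a \'etablir ne porte plus que sur l'entier $A^2+AB+B^2$ modulo $9$.

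L'observation cl\'e est que, puisque $k\geq 2$, l'exposant $3^{k-1}=3\cdot 3^{k-2}$ est multiple de $3$, de sorte que
$$
A = \bigl(a^{3^{k-2}}\bigr)^3 \quad\text{et}\quad B = \bigl(b^{3^{k-2}}\bigr)^3
$$
sont des cubes. Or les cubes modulo $9$ sont confin\'es \`a l'ensemble $\{0,\,1,\,-1\}$ : on v\'erifie directement que $x^3\equiv 0,\,1,\,8\bmod 9$ pour tout $x\in\mathbb Z$ (il suffit de parcourir $x=0,1,\dots,8$). Ainsi chacun de $A$ et $B$ est congru \`a $0$, $1$ ou $-1$ modulo~$9$.

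Il resterait alors \`a \'evaluer $A^2+AB+B^2 \bmod 9$ pour $A,B\in\{0,1,-1\}$. Un examen des neuf possibilit\'es (six seulement \`a sym\'etrie pr\`es, par \'echange de $A$ et $B$) montre que l'on obtient $3$ exactement lorsque $A\equiv B\equiv\pm 1$, la valeur $0$ lorsque $A\equiv B\equiv 0$, et $1$ dans tous les autres cas ; la valeur reste donc toujours dans $\{0,1,3\}$, ce qui est la congruence annonc\'ee. Il n'y a pas de v\'eritable difficult\'e ici : le seul point substantiel est de rep\'erer que le confinement des cubes modulo $9$ \`a $\{0,\pm1\}$ est pr\'ecis\'ement ce qui emp\^eche $\Phi_3(A,B)$ d'atteindre les autres classes modulo~$9$, tout le reste \'etant un calcul fini.
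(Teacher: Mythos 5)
Votre démonstration est correcte et suit essentiellement la même démarche que celle du texte : réduction à $\Phi_3$ via la formule \eqref{list4}, confinement des cubes modulo $9$ dans $\{0,\,1,\,-1\}$, puis vérification finie des valeurs de $u^2+uv+v^2$. Vous explicitez même un point laissé implicite dans le texte, à savoir que l'hypothèse $k\ge 2$ garantit que $3$ divise l'exposant $3^{k-1}$ et donc que $a^{3^{k-1}}$ et $b^{3^{k-1}}$ sont bien des cubes.
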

\begin{proof}[D\' emonstration] Les cubes modulo $9$ forment l'ensemble
$$\mathfrak Q (9) :=\{0,\ 1,\, -1\}.
$$
Or $$\Phi_3 (u,v) =u^2+uv+v^2.$$ Si $u$ et $v$ parcourent l'ensemble $\mathfrak Q (9)$ on voit que $\Phi_3 (u,v)$ parcourt
l'ensemble
$$
\mathcal E = \{0, 1, 3\bmod 9\}.
$$
Pour terminer la preuve de la proposition, il suffit d'utiliser le fait 
que
$$
\Phi_{3^k} (a,b) = \Phi_3 (a^{3^{k-1}}, b^{3^{k-1}}),
$$
cons\' equence de \eqref{list4}.
\end{proof}
\subsection{Propri\' et\' es de confinement modulo $4$.}\label{confinemod4}

Nous envisageons maintenant le cas o\`u $n$ est pair. Par la remarque \eqref{nnequiv}, on peut m\^eme supposer que $4 \mid n$ et on \' ecrit que $2^k \Vert n$ avec $k\geq 2$.

\vskip .2cm
\begin{enumerate}
\item Soit $a$ pair et $b$ impair. Par application it\' er\' ee de \eqref{list2}, on voit $\Phi_n (X,Y)$ est produit et quotient de polyn\^omes de la forme $\Phi_{2^k} (X^\alpha, Y^\alpha)$ o\`u $\alpha$ et $\beta$ sont des nombres impairs. Puisque $\Phi_{2^k}(X,Y) = X^{2^{k-1}} +Y^{2^{k-1}}$ et $k\geq 2$, on d\' eduit que $\Phi_{2^k}
(a^\alpha, b^\beta)\equiv 0+1 \equiv 1 \bmod 4$ et, par cons\'equent, que $$\Phi_n (a,b)\equiv 1 \bmod 4.$$
 
\item Soit $a$ et $b$ pairs. Puisque $\Phi_n (X,Y)$ est somme de mon\^omes de la forme $c_{\mu, \nu} X^\mu Y^\nu$ avec $\mu+\nu = \varphi (n)$ et $c_{\mu, \nu}$ entier, on voit que $4\mid c_{\mu, \nu} a^\mu b^\nu$, d'o\`u
$$\Phi_n (a,b) \equiv 0 \bmod 4.$$
\item Supposons $a\equiv b\equiv 1 \bmod 4$. On \' ecrit $\Phi_n(a,b) = b^{\varphi (n)} \phi_n (a/b) \equiv \phi_n (1) \bmod 4$. Et, d'apr\`es \eqref{phin(1)=}, ceci vaut $2\bmod 4$ si $n=2^k$ avec $k\geq 2$
ou $1\bmod 4$ si $n \not= 2^k.$ D'o\`u 
$$
\Phi_n(a,b) \equiv 1,\, 2 \bmod 4.
$$
\item Supposons $a\equiv -b \equiv 1 \bmod 4$. On \' ecrit $\Phi_n(a,b) = b^{\varphi (n)} \phi_n (a/b) \equiv \phi_n (-1) \bmod 4$. Il suffit d'appliquer
les deux derni\`eres lignes de \eqref{phin(-1)=} pour 
conclure que l'on a, dans ce cas 
$$
\Phi_n(a,b) \equiv 1,\, 2 \bmod 4.
$$
\item Supposons $a\equiv - 1 \bmod 4$. On utilise la relation $\Phi_n(a,b)=\Phi_n(-a,-b)$ valable pour $n\geq 3$. 
\end{enumerate}

Nous rassemblons ces divers r\' esultats sous la forme de la
\begin{prop}\label{prop:confinmod4} Soit $n$ un entier divisible par $4$. Alors pour tout $a$ et tout $b$ entiers, on a la congruence
$$
\Phi_{n}(a,b) \equiv 0,\, 1,\ 2\bmod 4.
$$
\end{prop}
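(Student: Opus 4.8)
The plan is to prove the contrapositive-flavoured statement that $\Phi_n(a,b)$ never lands in the forbidden class $3\bmod 4$, by a finite case analysis on the residues of the pair $(a,b)$. Write $2^k\Vert n$ with $k\ge 2$, which is legitimate since $4\mid n$. Two structural facts will drive everything: the self-reciprocity $\Phi_n(X,Y)=\Phi_n(Y,X)$ (so the roles of $a$ and $b$ are interchangeable) and the parity-preserving symmetry $\Phi_n(a,b)=\Phi_n(-a,-b)$, valid because $\varphi(n)$ is even. The latter lets me replace $a$ by $-a$ and $b$ by $-b$ simultaneously, so among the odd--odd residues I need only treat $a\equiv 1\bmod 4$.

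I would first dispose of the mixed-parity case, say $a$ even and $b$ odd. Iterating \eqref{list2} to strip the odd prime factors of $n$ exactly as in the proof of Proposition \ref{prop:confinmodp}, one writes $\Phi_n(a,b)$ as a product and quotient of factors $\Phi_{2^k}(a^{m_1},b^{m_1})=a^{m_1 2^{k-1}}+b^{m_1 2^{k-1}}$ over odd $m_1\mid n$. Since $k\ge 2$ gives $2^{k-1}\ge 2$, the even term is divisible by $4$ while the odd term is $\equiv 1\bmod 4$, so each factor is $\equiv 1\bmod 4$; a product and quotient of such units is again $\equiv 1\bmod 4$, whence $\Phi_n(a,b)\equiv 1\bmod 4$. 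The case $a$ odd, $b$ even follows by self-reciprocity. When both $a$ and $b$ are even, every monomial $c_{\mu\nu}a^\mu b^\nu$ of $\Phi_n$ has $\mu+\nu=\varphi(n)\ge 2$, so $a^\mu b^\nu$ is divisible by $2^{\varphi(n)}$ and hence by $4$, giving $\Phi_n(a,b)\equiv 0\bmod 4$.

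The remaining, and most delicate, case is $a,b$ both odd. Here I write $\Phi_n(a,b)=b^{\varphi(n)}\phi_n(a/b)$ and work $2$-adically: since $b$ is an odd unit and $\varphi(n)$ is even, $b^{\varphi(n)}\equiv 1\bmod 4$, so $\Phi_n(a,b)\equiv \phi_n(a/b)\bmod 4$. Using the symmetry above I may assume $a\equiv 1\bmod 4$, and then $a/b\equiv 1\bmod 4$ if $b\equiv 1$ while $a/b\equiv -1\bmod 4$ if $b\equiv 3$, so the value reduces to $\phi_n(1)$ or $\phi_n(-1)$ modulo $4$. Reading off \eqref{phin(1)=} and the last two lines of \eqref{phin(-1)=} (recall $4\mid n$), these quantities equal $1$ when $\omega(n)\ge 2$ and equal $2$ precisely when $n=2^k$. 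In every subcase the result lies in $\{1,2\}\bmod 4$.

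The one point requiring genuine care is exactly this last step: the value $2\bmod 4$ does occur, so the proof must hinge on the fact that $\phi_n(\pm1)$ is always $1$ or $2$ and never $3$, which is why the exact evaluations \eqref{phin(1)=} and \eqref{phin(-1)=} are indispensable rather than a mere congruence of the weaker shape $\Phi_n\equiv 0,1\bmod p$. Collecting the mixed-parity value $1$, the even--even value $0$, and the odd--odd values $1,2$, no class $3\bmod 4$ ever arises, which is precisely the assertion of the proposition.
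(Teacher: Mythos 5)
Your proof is correct and follows essentially the same route as the paper's: the same parity case analysis, the same iteration of \eqref{list2} to reduce the mixed-parity case to factors $\Phi_{2^k}(a^{m_1},b^{m_1})\equiv 1\bmod 4$, the same monomial argument for the even--even case, and the same reduction of the odd--odd case to $\phi_n(\pm1)$ via \eqref{phin(1)=} and \eqref{phin(-1)=}. The only (harmless) difference is that you make explicit the symmetry $\Phi_n(X,Y)=\Phi_n(Y,X)$ and the unit $b^{\varphi(n)}\equiv 1\bmod 4$, which the paper leaves implicit.
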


\subsection{ Automorphismes des formes cyclotomiques} \label{autoforcyclo}
Soit $\Phi_n(X,Y)$ une forme cyclotomique de degr\'e $d=\varphi (n)$. Par la d\' efinition \eqref{defauto}, rechercher les automorphismes de $\Phi_n$
consiste \`a rechercher les matrices $U$ de ${\rm Gl}(2, \mathbb Q)$
$$
U =\begin{pmatrix} u_1 & u_2\\
u_3 & u_4
\end{pmatrix},
$$
telles que 
\begin{equation}\label{defauto*}
\Phi_n (X,Y) =\Phi_n (u_1X +u_2Y, u_3X+u_4 Y).
\end{equation}
Cette \'egalit\' e formelle entra\^ine que l'ensemble $\mathbb U_n$ des racines primitives $n$--i\`emes de l'unit\' e est stable par l'application $\mathcal H$ de 
$\widehat {\mathbb C} $ dans $\widehat {\mathbb C}$ d\' efinie par 
\begin{equation}\label{defH*} z\mapsto 
\mathcal H(z) :=\frac{u_1z +u_2}{u_3z+u_4}.
\end{equation}
Si $\mathbb U_n$ a au moins trois \' el\'ements (c'est--\`a--dire $n\geq 5$ et $n\not=6$), il y a un cercle et un seul contenant $\mathbb U_n$. Il s'agit du cercle $\mathbb S^1$ et celui--ci est stable par $\mathcal H$. Les transformations de $\widehat {\mathbb C}$ de la forme $(az+b)/(cz+d)$ (avec $(a, b, c, d) \in \mathbb C^4$ et $ad-bc\not=0$) laissant $\mathbb S^1$ globalement invariant sont connues : il s'agit des transformations $z\mapsto \rho z$ et $z\mapsto \rho/z$ avec $\rho $ nombre complexe de module $1$. Ainsi la fonction $\mathcal H$ d\' efinie en \eqref{defH*} a n\' ecessairement une des quatre formes
$$
\mathcal H (z) =z, \ -z,\ 1/z, \ -1/z,
$$
puisque les $u_i$ sont des rationnels. Enfin pour tout $n \geq 1$, on a l'\' equivalence $$\xi \in \mathbb U_n\iff 1/\xi \in \mathbb U_n$$ et seulement pour $n\equiv 0 \bmod 4$, l'\' equivalence $$\xi \in \mathbb U_n \iff -\xi \in \mathbb U_n.$$
Nous voyons donc que si
\begin{enumerate}
\item si $n\equiv 0\bmod 4$, et $n\geq 8$, on a $ \frac{u_1z+u_2}{u_3z+u_4} =z,\ -z,\ \frac{1}{z}, \ -\frac{1}{z},$
\item si $n \not\equiv 0 \bmod 4$, $n\geq 5$ et $n\not= 6$, on a $\frac{u_1z+u_2}{u_3z +u_4} = z,\ -z.$
\end{enumerate}
Revenant \`a la d\' efinition \eqref{defauto*} et rappelant que $\Phi_n$ est une forme homog\`ene de degr\'e $\varphi (n)$ on obtient les matrices $U$: 
\begin{enumerate}
\item Pour $n\equiv 0\bmod 4$, et $n\geq 8$, on a 
$$
U = \begin{pmatrix}\pm1 & 0\\ 0 & \pm 1 \end{pmatrix}, \ \begin{pmatrix}\pm1 & 0\\ 0 & \mp 1 \end{pmatrix}, \ \begin{pmatrix}0 & \pm 1\\ \pm 1 & 0 \end{pmatrix},\ \begin{pmatrix}0 & \pm 1\\ \mp 1 & 0 \end{pmatrix}$$
\item Pour $n \not\equiv 0 \bmod 4$, $n\geq 5$ et $n\not= 6$, on a 
$$U= \begin{pmatrix}\pm1 & 0\\ 0 & \pm 1 \end{pmatrix}, \ \begin{pmatrix}0 & \pm 1\\ \pm 1 & 0 \end{pmatrix}.$$
\end{enumerate}
Les petites valeurs de $n$ (celles v\' erifiant $\varphi (n) =2$) se font directement gr\^ace aux
formes explicites donn\' ees en \eqref{Phi3Phi4Phi6}. En notant $\mathbb D_k$ le groupe di\' edral \`a $2k$ \'el\' ements, on obtient la 
 \begin{prop} 
 Soit $n \geq 3$ un entier. Alors le groupe des automorphismes $ {\rm Aut} \Phi_n$ de $\Phi_n (X,Y)$
est
$$
 {\rm Aut}\, \Phi_n =
 \begin{cases} \mathbb D_4 & \text{ si $4$ divise $n$}, \\
 \mathbb D_2& \text{ dans le cas contraire}.
 \end{cases} 
 $$
\end{prop}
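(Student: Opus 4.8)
La strat\'egie est de compl\'eter la discussion qui pr\'ec\`ede imm\'ediatement l'\'enonc\'e, laquelle ram\`ene d\'ej\`a le calcul de $\mathrm{Aut}\,\Phi_n$ \`a l'\'etude des homographies $\mathcal H$ de \eqref{defH*} qui stabilisent l'ensemble $\mathbb U_n$ des racines primitives $n$--i\`emes de l'unit\'e. Le m\'ecanisme sous-jacent, que nous rendrons explicite, est la factorisation $\Phi_n(X,Y)=\prod_{\xi\in\mathbb U_n}(X-\xi Y)$ : l'identit\'e \eqref{defauto*} force $\mathcal H$ \`a permuter $\mathbb U_n$, la normalisation des coefficients \'etant ensuite fix\'ee par l'homog\'en\'eit\'e de $\Phi_n$. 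Pour $\varphi(n)\ge 4$, c'est--\`a--dire $n\ge 5$ et $n\not=6$, la discussion pr\'ec\'edente fournit donc d\'ej\`a l'inclusion de $\mathrm{Aut}\,\Phi_n$ dans l'ensemble des matrices list\'ees : l'unicit\'e du cercle $\mathbb S^1$ et la rationalit\'e des $u_i$ contraignent $\mathcal H$ \`a \^etre l'une des quatre homographies $z,\ -z,\ 1/z,\ -1/z$, et l'homog\'en\'eit\'e impose aux entr\'ees de valoir $\pm1$.

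Il restera \`a \'etablir la r\'eciproque, \`a savoir que chacune des matrices list\'ees est effectivement un automorphisme. Les matrices $\pm I$ conviennent trivialement, $\varphi(n)$ \'etant pair. Pour $\begin{pmatrix}0&1\\1&0\end{pmatrix}$, l'\'egalit\'e \`a prouver est $\Phi_n(X,Y)=\Phi_n(Y,X)$, que nous obtiendrons en r\'e\'ecrivant $\Phi_n(Y,X)=\prod_{\xi}(Y-\xi X)=\prod_{\xi}(X-\xi^{-1}Y)$ et en utilisant que $\xi\mapsto\xi^{-1}$ permute $\mathbb U_n$ (valable pour tout $n$) ainsi que $\prod_{\xi}(-\xi)=1$ pour $n\ge 3$. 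Lorsque $4\mid n$, la stabilit\'e de $\mathbb U_n$ par $\xi\mapsto-\xi$, propre \`a cette congruence, donne de m\^eme $\Phi_n(X,-Y)=\Phi_n(X,Y)$, donc que $\mathrm{diag}(1,-1)$, puis ses produits par la matrice pr\'ec\'edente, sont des automorphismes. En combinant ces identit\'es, les huit matrices (respectivement les quatre) seront toutes des automorphismes, d'o\`u l'\'egalit\'e entre $\mathrm{Aut}\,\Phi_n$ et l'ensemble list\'e.

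Nous reconna\^\i trons alors la structure de groupe en confrontant la liste aux d\'efinitions rappel\'ees au \S\ref{StewartYaonew}. Pour $4\nmid n$, les quatre matrices sont exactement celles engendr\'ees par $\begin{pmatrix}0&1\\1&0\end{pmatrix}$ et $\begin{pmatrix}-1&0\\0&-1\end{pmatrix}$, d'o\`u $\mathrm{Aut}\,\Phi_n=\mathbb D_2$. Pour $4\mid n$, les huit matrices forment le groupe engendr\'e par $\begin{pmatrix}0&1\\1&0\end{pmatrix}$ et $\begin{pmatrix}0&1\\-1&0\end{pmatrix}$ ; un calcul d'ordres et de relations (la premi\`ere est d'ordre $2$, la seconde d'ordre $4$, li\'ees par la relation di\'edrale usuelle) l'identifiera \`a $\mathbb D_4$.

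Le point d\'elicat sera le traitement des petites valeurs $\varphi(n)=2$, soit $n\in\{3,4,6\}$, pour lesquelles l'argument du cercle \'echoue : $\mathbb U_n$ n'ayant que deux \'el\'ements, le cercle les contenant n'est plus unique et $\mathcal H$ n'est plus contrainte. Il faudra les examiner directement \`a partir des formes explicites \eqref{Phi3Phi4Phi6}. C'est l\`a que se concentre la subtilit\'e, car ces formes quadratiques poss\`edent en r\'ealit\'e un groupe d'automorphismes infini (cf.\ la Remarque signalant que $\mathrm{Aut}\,\Phi_4=\mathrm O(2,\mathbb Q)$) : l'\'enonc\'e doit y \^etre compris comme isolant le sous-groupe di\'edral pertinent. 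L'essentiel de la preuve repose ainsi sur la bonne formulation de ces cas exceptionnels de petit degr\'e, le c\oe ur de l'argument \'etant, pour sa part, largement pr\'epar\'e par la discussion pr\'ec\'edente.
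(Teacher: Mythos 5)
Your proposal follows essentially the same route as the paper: for $\varphi(n)\ge 4$ the unique-circle argument reduces $\mathcal H$ to $z,\,-z,\,1/z,\,-1/z$, and your explicit check of the reverse inclusion via $\Phi_n(X,Y)=\prod_{\xi\in\mathbb U_n}(X-\xi Y)$, the identity $\prod_{\xi}(-\xi)=\phi_n(0)=1$, and the stability of $\mathbb U_n$ under $\xi\mapsto \xi^{-1}$ (and under $\xi\mapsto-\xi$ exactly when $4\mid n$) correctly supplies the step the paper leaves implicit. Your diagnosis of the cases $\varphi(n)=2$ is in fact more accurate than the paper's own claim that $n\in\{3,4,6\}$ can be settled directly from \eqref{Phi3Phi4Phi6}: since ${\rm Aut}\,\Phi_4={\rm O}(2,\mathbb Q)$ is infinite (as the paper's earlier remark records), the proposition is false as literally stated for these three values and must be read as restricted to $\varphi(n)\ge 4$, the only case actually used later through the corollaire \ref{corollaire:wPhin}.
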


Nous en d\'eduisons:
\begin{cor}\label{corollaire:wPhin}
Pour $n \geq 3$, on a $W_{\Phi_n}=w_n$ o\`u $w_n$ est d\'efini par \eqref{Equation:wn}.
\end{cor}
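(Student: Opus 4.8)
The plan is to read ${\rm Aut}\,\Phi_n$ off the preceding proposition and then simply substitute the resulting group into the two formulas \eqref{defWF1} and \eqref{defWF2} that define $W_F$ in the cases $G=\mathbb{D}_2$ and $G=\mathbb{D}_4$. The single feature I would exploit to make the computation collapse is that the proposition exhibits ${\rm Aut}\,\Phi_n$ not merely as $\mathbb{Q}$-conjugate to a member of $\mathcal G$, but as literally equal to the standard copy of $\mathbb{D}_2$ or $\mathbb{D}_4$ already sitting inside ${\rm Gl}(2,\mathbb{Z})$; equivalently, the conjugating matrix $T$ may be taken to be the identity. Since every element of ${\rm Gl}(2,\mathbb{Z})$ stabilizes $\mathbb{Z}^2$, each sublattice of those $(v,w)\in\mathbb{Z}^2$ with $A\left(\begin{smallmatrix}v\\w\end{smallmatrix}\right)\in\mathbb{Z}^2$ for all $A$ in the relevant group is automatically all of $\mathbb{Z}^2$, hence has determinant of absolute value $1$.

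First I would treat the case $4\nmid n$. Here the proposition gives ${\rm Aut}\,\Phi_n=\mathbb{D}_2=\{\pm I\}\cup\bigl\{\pm\left(\begin{smallmatrix}0&1\\1&0\end{smallmatrix}\right)\bigr\}$, which is exactly the representative of $\mathbb{D}_2$ listed in $\mathcal G$. As all four matrices lie in ${\rm Gl}(2,\mathbb{Z})$, the lattice $\Lambda$ of \eqref{defWF1} equals $\mathbb{Z}^2$ and $|\det(\Lambda)|=1$. Formula \eqref{defWF1} then yields $W_{\Phi_n}=\tfrac12\bigl(1-\tfrac12\bigr)=\tfrac14$, which is $w_n$ by \eqref{Equation:wn}.

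Next I would handle $4\mid n$, where ${\rm Aut}\,\Phi_n=\mathbb{D}_4$ is the group of eight integer matrices generated by $\left(\begin{smallmatrix}0&1\\1&0\end{smallmatrix}\right)$ and $\left(\begin{smallmatrix}0&1\\-1&0\end{smallmatrix}\right)$. I would first record its three subgroups of order $4$: the cyclic group $\bigl\langle\left(\begin{smallmatrix}0&1\\-1&0\end{smallmatrix}\right)\bigr\rangle$ and the two Klein four-groups $\bigl\{\pm I,\pm\left(\begin{smallmatrix}0&1\\1&0\end{smallmatrix}\right)\bigr\}$ and $\bigl\{\pm I,\pm\left(\begin{smallmatrix}1&0\\0&-1\end{smallmatrix}\right)\bigr\}$; these are the groups $G_1,G_2,G_3$ entering \eqref{defWF2}. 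As each of them, together with the full group, is contained in ${\rm Gl}(2,\mathbb{Z})$, the four lattices $\Lambda,\Lambda_1,\Lambda_2,\Lambda_3$ all equal $\mathbb{Z}^2$. Substituting $|\det(\Lambda)|=|\det(\Lambda_i)|=1$ into \eqref{defWF2} gives $W_{\Phi_n}=\tfrac12\bigl(1-\tfrac12-\tfrac12-\tfrac12+\tfrac34\bigr)=\tfrac18=w_n$.

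The arithmetic here is elementary, so the only point at which I would spend real care is the reduction $T=I$: it is precisely the integrality of the matrices furnished by the proposition that forces every $|\det(\Lambda)|$ and $|\det(\Lambda_i)|$ to equal $1$. Had ${\rm Aut}\,\Phi_n$ only been $\mathbb{Q}$-conjugate to $\mathbb{D}_2$ or $\mathbb{D}_4$ by a genuinely non-integral $T$, the lattices could be proper sublattices of $\mathbb{Z}^2$ and the values of $W_{\Phi_n}$ would change; no such phenomenon occurs here. Since the two cases $4\nmid n$ and $4\mid n$ exhaust all $n\geq 3$ covered by the proposition, this would establish $W_{\Phi_n}=w_n$ in every case.
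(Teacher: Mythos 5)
Your proposal is correct and follows exactly the paper's argument: the paper's proof likewise observes that ${\rm Aut}\,\Phi_n$ consists of integer matrices, so the lattices $\Lambda$ and $\Lambda_i$ in \eqref{defWF1} and \eqref{defWF2} all equal $\mathbb Z^2$ with determinant $1$, and the two formulas then evaluate to $\tfrac14$ and $\tfrac18$ respectively. You merely spell out the arithmetic and the three order-$4$ subgroups of $\mathbb D_4$, which the paper leaves implicit.
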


\begin{proof}[D\'emonstration] 
Les groupes d'automorphismes des formes cyclotomiques 
sont constitu\'es de matrices \`a coefficients entiers. 
Ainsi, dans les d\' efinitions \eqref{defWF1} et \eqref{defWF2}, quand $F$ est une forme cyclotomique, les r\'eseaux $\Lambda$ et $\Lambda_i$ sont \'egaux \`a $\mathbb Z^2$,
 leur d\'eterminant vaut $1$.
\end{proof}

\subsection{Isomorphismes entre formes cyclotomiques} 
Rappelons que la d\'efinition d'isomorphisme entre deux formes binaires a \' et\' e donn\' ee en \eqref{isom}.
La proposition suivante caract\' erise les formes binaires cyclotomiques isomorphes.

\begin{prop}\label{prop:formescyclotomiquesisomorphes}
Soient $n_1$ et $n_2$ deux entiers positifs avec $n_1<n_2$. Les conditions suivantes sont \' equivalentes. 
\\
$(1)$ On a $\varphi(n_1)=\varphi(n_2)$ et les deux formes binaires cyclotomiques $\Phi_{n_1}$ et $\Phi_{n_2}$ sont isomorphes.
\\
$(2)$ Les deux formes binaires cyclotomiques $\Phi_{n_1}$ et $\Phi_{n_2}$ repr\'esentent les m\^emes entiers.
\\
$(3)$ $n_1$ est impair et $n_2=2n_1$.
\end{prop}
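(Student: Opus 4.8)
The plan is to prove the cyclic chain $(2)\Rightarrow(1)\Rightarrow(3)\Rightarrow(2)$, which closes the loop and gives the three equivalences. The implication $(3)\Rightarrow(2)$, together with its companion $(3)\Rightarrow(1)$, is immediate from the reflection identity $\Phi_{2n}(X,Y)=\Phi_n(X,-Y)$ recalled in the introduction. Indeed, if $n_1$ is odd and $n_2=2n_1$, then $\varphi(n_2)=\varphi(2)\varphi(n_1)=\varphi(n_1)$, and the identity exhibits $\Phi_{n_2}(X,Y)=\Phi_{n_1}(X,-Y)$, so that $\Phi_{n_1}$ and $\Phi_{n_2}$ are isomorphic through the matrix $\mathrm{diag}(1,-1)\in{\rm GL}(2,\Q)$ in the sense of \eqref{isom}; since $(x,y)\mapsto(x,-y)$ is a bijection of $\Z^2$, the two forms represent exactly the same integers.

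For the heart of the matter, $(1)\Rightarrow(3)$, I would translate the algebraic isomorphism into a coincidence of cyclotomic fields. Assume $\varphi(n_1)=\varphi(n_2)=d$ and $\Phi_{n_1}(X,Y)=\Phi_{n_2}(u_1X+u_2Y,u_3X+u_4Y)$ for some $U\in{\rm GL}(2,\Q)$. Passing to the affine coordinate $z=X/Y$ and writing $\Phi_n(X,Y)=Y^d\prod_{\zeta\in\mathbb{U}_n}(z-\zeta)$, where $\mathbb{U}_n$ is the set of primitive $n$-th roots of unity, this identity says that the $\Q$-rational Möbius transformation $\mathcal H(z)=(u_1z+u_2)/(u_3z+u_4)$ maps $\mathbb{U}_{n_1}$ bijectively onto $\mathbb{U}_{n_2}$ (no root is sent to $\infty$, since $\phi_{n_2}(0)\neq0$ forces every root of $\Phi_{n_2}$ to be finite). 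Because the $u_i$ are rational, for every $\zeta\in\mathbb{U}_{n_1}$ the value $\mathcal H(\zeta)$ lies in $\Q(\zeta_{n_1})$; as $\mathcal H(\zeta)$ is a primitive $n_2$-th root of unity, this gives $\Q(\zeta_{n_2})\subseteq\Q(\zeta_{n_1})$, and the same argument applied to the (equally $\Q$-rational) map $\mathcal H^{-1}$ yields the reverse inclusion, whence $\Q(\zeta_{n_1})=\Q(\zeta_{n_2})$. By the classical description of coincidences among cyclotomic fields (equality of conductors), two distinct indices $n_1<n_2$ generate the same cyclotomic field precisely when $n_2\equiv2\bmod4$ and $n_1=n_2/2$; that is, $n_1$ is odd and $n_2=2n_1$, which is exactly $(3)$.

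Finally, for $(2)\Rightarrow(1)$ I would argue by counting. Suppose $\Phi_{n_1}$ and $\Phi_{n_2}$ represent the same integers. For $n\ge3$ the form $\Phi_n$ is positive definite ($\phi_n$ has no real zero), of nonzero discriminant and degree $\varphi(n)\ge3$, so Theorem \ref{thm21} gives $R_{\Phi_n}(N)\sim A_{\Phi_n}W_{\Phi_n}N^{2/\varphi(n)}$ with $A_{\Phi_n}W_{\Phi_n}>0$ (here $W_{\Phi_n}=w_n\in\{1/4,1/8\}$ by Corollary \ref{corollaire:wPhin}). Equality of the two sets of represented integers forces equality of these counting functions, hence $\varphi(n_1)=\varphi(n_2)=:d$. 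If moreover $\Phi_{n_1}$ and $\Phi_{n_2}$ were not isomorphic, then, these two positive-definite forms satisfying all hypotheses of Corollary \ref{F1=F2}, we would obtain $R_{\Phi_{n_1},\Phi_{n_2}}(N)\ll N^{\eta_d+\varepsilon}$ with $\eta_d<2/d$; but the common represented integers are here all of them, so $R_{\Phi_{n_1},\Phi_{n_2}}(N)=R_{\Phi_{n_1}}(N)\asymp N^{2/d}$, a contradiction. Thus the forms are isomorphic, so $(1)$ holds. The only point needing separate elementary treatment is the finite list of indices with $\varphi(n)\le2$, namely $n\in\{1,2,3,4,6\}$, where Corollary \ref{F1=F2} does not apply; there one checks $(1)\Leftrightarrow(2)\Leftrightarrow(3)$ by hand, the binary quadratic cases $\Phi_3,\Phi_4,\Phi_6$ being separated by their discriminants $-3,-4,-3$. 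I expect this low-degree bookkeeping, together with verifying that the cyclotomic forms meet every hypothesis of Corollary \ref{F1=F2}, to be the only real friction; the conceptual core is the field coincidence $\Q(\zeta_{n_1})=\Q(\zeta_{n_2})$ established above.
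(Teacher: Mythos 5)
Your proposal is correct and follows essentially the same route as the paper: $(1)\Rightarrow(3)$ by passing to the coincidence of cyclotomic fields $\Q(\zeta_{n_1})=\Q(\zeta_{n_2})$ (the paper phrases the classical classification via its lemma on the torsion subgroup of $\Q(\zeta_n)$), $(3)\Rightarrow(2)$ by the reflection identity, and $(2)\Rightarrow(1)$ by comparing the counting functions of th\'eor\`eme \ref{thm21} and corollaire \ref{F1=F2}. Your explicit treatment of the low-degree cases $\varphi(n)\le 2$ is a welcome extra precaution that the paper leaves implicit.
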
 

Les formes binaires cyclotomiques $\Phi_n(X,Y)$ avec $\varphi(n)=d$ et $n$ non congru \`a $2$ modulo $4$ forment donc un syst\`eme complet de repr\' esentants des classes d'isomorphisme des formes binaires cyclotomiques de degr\'e $d$.

La d\' emonstration de la proposition \ref{prop:formescyclotomiquesisomorphes} utilisera le lemme suivant:

\begin{lem}\label{lem:formescyclotomiquesisomorphes}
Soit $n$ un entier positif. Le groupe de torsion du corps cyclotomique $\Q(\zeta_n)$ est cyclique, d'ordre $n$ si $n$ est pair, d'ordre $2n$ si $n$ est impair.
\end{lem}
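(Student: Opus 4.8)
Le plan est d'identifier le groupe de torsion de $\Q(\zeta_n)$ au groupe $\mu_w$ des racines de l'unité contenues dans ce corps, puis de déterminer l'entier $w$. Tout sous-groupe fini du groupe multiplicatif d'un corps étant cyclique, le groupe de torsion de $\Q(\zeta_n)^\times$ est cyclique; comme $\Q(\zeta_n)$ est un corps de nombres, ce groupe est fini, disons d'ordre $w$, donc égal à $\mu_w$ avec $\zeta_w \in \Q(\zeta_n)$. Il ne reste alors qu'à calculer $w$.

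Pour la minoration, j'observe d'une part que $\zeta_n \in \Q(\zeta_n)$ entraîne $n \mid w$, et d'autre part que si $n$ est impair, $-\zeta_n = \zeta_2\zeta_n$ est d'ordre $2n$ (car $\gcd(2,n)=1$), donc est une racine primitive $2n$-ième de l'unité appartenant au corps, d'où $2n \mid w$.

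Pour la majoration, la clé est la caractérisation des inclusions entre corps cyclotomiques: $\zeta_w \in \Q(\zeta_n)$ équivaut à $\Q(\zeta_w) \subseteq \Q(\zeta_n)$, donc à $\Q(\zeta_{\mathrm{lcm}(w,n)}) = \Q(\zeta_n)$ (le compositum de $\Q(\zeta_a)$ et $\Q(\zeta_b)$ étant $\Q(\zeta_{\mathrm{lcm}(a,b)})$), c'est-à-dire à $\varphi(\mathrm{lcm}(w,n)) = \varphi(n)$ grâce à l'égalité des degrés $[\Q(\zeta_m):\Q] = \varphi(m)$. On se ramène ainsi à l'énoncé arithmétique élémentaire suivant: si $n \mid L$, alors $\varphi(L) = \varphi(n)$ si et seulement si $L = n$, ou bien $n$ est impair et $L = 2n$. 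Je le démontrerais par multiplicativité en écrivant $\varphi(L)/\varphi(n) = \prod_p \varphi(p^{v_p(L)})/\varphi(p^{v_p(n)})$, chaque facteur étant $\ge 1$ puisque $v_p(n) \le v_p(L)$, et en constatant qu'un facteur ne peut valoir $1$ avec $v_p(L) > v_p(n)$ que dans l'unique cas $p = 2$, $v_2(n) = 0$, $v_2(L) = 1$.

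En combinant: pour $n$ pair, seul le cas $\mathrm{lcm}(w,n) = n$ subsiste, donc $w \mid n$, ce qui joint à $n \mid w$ donne $w = n$; pour $n$ impair, on a $\mathrm{lcm}(w,n) \in \{n, 2n\}$, donc $w \mid 2n$, ce qui joint à $2n \mid w$ donne $w = 2n$. Il n'y a pas d'obstacle profond: le point le plus substantiel est le petit lemme arithmétique sur $\varphi$, le reste reposant sur des faits standard de la théorie des corps cyclotomiques (cyclicité des sous-groupes finis de $K^\times$, degré $\varphi(m)$, et description du compositum). On pourrait d'ailleurs se contenter de la seule implication directe de l'équivalence sur $\varphi$ pour obtenir la majoration, les minorations venant des racines de l'unité explicites ci-dessus.
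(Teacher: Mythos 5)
Votre démonstration est correcte et suit pour l'essentiel la même route que celle du texte : dans les deux cas, tout revient à constater qu'une racine de l'unité d'ordre $N$ multiple de $n$ contenue dans $\Q(\zeta_n)$ force l'égalité des degrés $\varphi(N)=\varphi(n)$, ce qui n'est possible que pour $N=n$, ou $N=2n$ avec $n$ impair. Vous êtes en fait plus complet que le texte, qui ne traite que le pas $\varphi(pn)=\varphi(n)$ pour un premier $p$ et laisse implicites aussi bien la minoration $2n\mid w$ pour $n$ impair (votre remarque que $-\zeta_n$ est d'ordre $2n$) que l'exclusion des multiples supérieurs ; votre lemme arithmétique sur $\varphi(\mathrm{lcm}(w,n))=\varphi(n)$, démontré par multiplicativité, règle tout cela d'un coup.
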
 

\begin{proof}[D\' emonstration du lemme \ref{lem:formescyclotomiquesisomorphes}]
Le groupe de torsion du corps cyclotomique $\Q(\zeta_n)$ est cyclique, d'ordre multiple de $n$. S'il est d'ordre sup\'erieur \`a $n$, alors il contient une racine primitive de l'unit\'e d'ordre $pn$, avec $p$ premier, dont le degr\' e est $\varphi(pn)$. On en d\'eduit $\varphi(pn)=\varphi(n)$, d'o\`u il r\'esulte que $p=2$ et que $n$ est impair. 
\end{proof}

\begin{proof}[D\' emonstration de la proposition \ref{prop:formescyclotomiquesisomorphes}]
\null\hfill\break
$(1)\Longrightarrow(3)$
Supposons $\Phi_{n_1}$ et $\Phi_{n_2}$ isomorphes avec $n_1<n_2$. Il existe une matrice
$$
\begin{pmatrix}
u_1&u_2\\u_3&u_4
\end{pmatrix}\in\GL_2(\Q)
$$
telle que 
$$
\zeta_{n_1}=\frac{u_1\zeta_{n_2}+u_2}{u_3\zeta_{n_2}+u_4}\cdotp
$$
On en d\' eduit que les corps cyclotomiques $\Q(\zeta_{n_1})$ et $\Q(\zeta_{n_2})$ co•\"incident, et le lemme \ref{lem:formescyclotomiquesisomorphes} donne le r\' esultat.

\noindent
$(3)\Longrightarrow(2)$. Si $n_1$ est impair et $n_2=2n_1$, alors $\phi_{n_2}(t)=\phi_{n_1}(-t)$, donc les deux formes binaires cyclotomiques $\Phi_{n_1}$ et $\Phi_{n_2}$ repr\' esentent les m\^emes entiers.

\noindent
$(2)\Longrightarrow(1)$. En utilisant les notations du th\' eor\`eme 
\ref{thm21} et du corollaire \ref{F1=F2}, nous avons, par hypoth\`ese les \' egalit\' es
\begin{equation}\label{R=R=R}
R_{\Phi_1} (N) = R_{\Phi_2} (N) =R_{\Phi_1, \Phi_2} (N),
\end{equation}
pour tout $N\geq 1$. Par le th\'eor\`eme \ref{thm21} la premi\`ere \' egalit\'e de \eqref{R=R=R} implique $\varphi (n_1) =\varphi (n_2).$ Enfin si $\Phi_{n_1}$ et $\Phi_{n_2} $
n'\' etaient pas isomorphes, le corollaire \ref{F1=F2} entra\^inerait que la deuxi\`eme \' egalit\' e de \eqref{R=R=R} serait impossible pour $N$ suffisamment grand.
\end{proof}

\subsection{R\' esultats auxiliaires de comptage}

Notre d\' emonstration du th\' eor\`eme \ref{thmcentral} au \S \ref{S:DemThmPpal} utilisera l'\' enonc\' e suivant \cite[Theorem 1.1]{FLW}.

\begin{thm} \label{Theorem:FLW}
Soit $m$ un entier positif et soient $n,x,y$ des entiers rationnels v\' erifiant $n\ge 3$, $\max\{|x|,|y|\}\ge 2$ et $\Phi_n(x,y)=m$. Alors 
$$
\max\{|x|,|y|\} \le \frac{2}{\sqrt{3}} \, m^{\frac{1}{\varphi(n)}}
\quad\hbox{et par cons\' equent,}\quad
\varphi(n) \le \frac{2}{\log 3} \log m.
$$
\end{thm}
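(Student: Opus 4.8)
The plan is to show that the first inequality is equivalent to a lower bound for $\Phi_n$ on the unit square, and then to read off the second inequality as an immediate consequence. Since $\varphi(n)$ is even for $n\ge 3$ one has $\Phi_n(-x,-y)=\Phi_n(x,y)$, and from $\Phi_n(X,Y)=\prod_{\zeta}(X-\zeta Y)$ together with $\phi_n(0)=1$ one gets the homogeneous identity $\Phi_n(x,y)=x^{\varphi(n)}\phi_n(y/x)$. Hence, writing $M=\max\{|x|,|y|\}$, I may assume $x=M>0$ and set $v=y/x\in[-1,1]$, so that $m=\Phi_n(x,y)=M^{\varphi(n)}\,\phi_n(v)$. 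With this normalization the assertion $M\le \tfrac{2}{\sqrt 3}\,m^{1/\varphi(n)}$ is exactly
$$
\phi_n(v)\ge \Bigl(\tfrac{3}{4}\Bigr)^{\varphi(n)/2}\qquad(v\in[-1,1]).
$$
Granting this, the second inequality follows at once: if $x,y\in\Z$ with $M\ge 2$, then $m\ge (3/4)^{\varphi(n)/2}2^{\varphi(n)}=3^{\varphi(n)/2}$, whence $\log m\ge \tfrac{\varphi(n)}2\log 3$, i.e. $\varphi(n)\le \tfrac{2}{\log 3}\log m$.

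It remains to prove the displayed lower bound for $\phi_n$ on $[-1,1]$. First I would dispose of the boundary: for $n\ge 3$ formula \eqref{phin(1)=} gives $\phi_n(1)\ge 1$ and \eqref{phin(-1)=} gives $\phi_n(-1)\ge 1$, so both endpoints are harmless. Because the target constant $(3/4)^{\varphi(n)/2}$ decreases geometrically in $\varphi(n)$ while, as I explain below, $\phi_n$ stays bounded away from $0$ on $[-1,1]$ uniformly in $n$, the inequality is only delicate for the finitely many $n$ with small $\varphi(n)$. Concretely, I would split into two ranges. For $\varphi(n)\le 4$ --- that is $n\in\{3,4,5,6,8,10,12\}$ --- I would verify the inequality directly on the explicit quadratics and quartics; this is where equality occurs, namely for $n=3,6$ at $v=\mp 1/2$, corresponding to the extremal integer point $(x,y)=(2,\mp 1)$ with $m=3$. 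For $\varphi(n)\ge 6$ the right-hand side is at most $(3/4)^3=27/64<\tfrac12$, so it suffices to establish the uniform bound $\min_{v\in[-1,1]}\phi_n(v)\ge \tfrac12$.

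The main obstacle is precisely this uniform lower bound. Writing $\phi_n(v)=\prod_{j}\bigl(v^2-2v\cos\theta_j+1\bigr)$ over the $\varphi(n)/2$ conjugate pairs of primitive $n$-th roots $e^{\pm i\theta_j}$, one sees that a single pair whose angle $\theta_j$ is close to $0$ (when $v$ is close to $1$) or to $\pi$ (when $v$ is close to $-1$) can make one factor small, so no factor-by-factor estimate can succeed and the bound is genuinely global. The natural way to control the product is potential-theoretic: taking logarithms, $\tfrac1{\varphi(n)}\sum_{\zeta}\log|v-\zeta|$ is a Riemann-type average of $\log|v-e^{i\theta}|$ over the primitive-root angles, whose continuous counterpart $\tfrac1{2\pi}\int_0^{2\pi}\log|v-e^{i\theta}|\,d\theta$ equals $0$ for $|v|\le 1$ by Jensen's formula. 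The task is thus to bound the discrepancy between this discrete average and $0$, and in particular to control the contribution of the $O(1)$ primitive roots nearest the real axis (those closest to $e^{\pm 2\pi i/n}$ or to $-1$), which carry the only potentially large negative contribution.

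I expect that an effective equidistribution estimate for the primitive roots, or equivalently a Möbius-weighted estimate on $\sum_{d\mid n}\mu(n/d)\log|v^d-1|$, yields $\phi_n(v)\ge\tfrac12$ with an explicit and, for large $n$, very comfortable margin; should $\tfrac12$ prove awkward, any explicit positive constant works at the cost of enlarging the finite range checked by hand. Pinning down this uniform estimate, rather than the elementary reductions and the finite verification above, is the crux of the argument.
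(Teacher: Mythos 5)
Your reduction is sound and matches the only reasonable skeleton for this statement: using $\Phi_n(x,y)=\Phi_n(y,x)=\Phi_n(-x,-y)$ for $n\ge 3$ to normalize $x=\max\{|x|,|y|\}$, the first inequality is exactly the lower bound $\phi_n(v)\ge (3/4)^{\varphi(n)/2}$ for $v\in[-1,1]$, the second inequality then follows from $m\ge 3^{\varphi(n)/2}$ when $\max\{|x|,|y|\}\ge 2$, and you correctly locate the case of equality at $n=3$, $(x,y)=(2,-1)$, $m=3$. Note, however, that the paper under review does not prove this theorem at all: it imports it verbatim as \cite[Theorem 1.1]{FLW}, so the bound you are trying to establish is precisely the main technical lemma of that earlier paper, proved there by elementary means.

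The problem is that your argument stops exactly where the work begins. The entire content of the theorem is the uniform lower bound for $\phi_n$ on $[-1,1]$, and you do not prove it: you observe (correctly) that a factor-by-factor estimate over the conjugate pairs $v^2-2v\cos\theta_j+1$ fails because a pair near the real axis can contribute $\sin^2\theta_j$, you sketch a potential-theoretic strategy via Jensen's formula and equidistribution of primitive roots, and then you write that you \emph{expect} an effective discrepancy estimate to yield $\min_{[-1,1]}\phi_n\ge 1/2$. That expectation is not a proof, and the step is genuinely delicate: the discrete average $\varphi(n)^{-1}\sum_\zeta\log|v-\zeta|$ involves a logarithmic singularity at the roots nearest $v$, so a naive Erd\H os--Tur\'an bound does not close the argument, and the margin is thin (already for $n=p$ prime the minimum of $\phi_p$ on $[-1,1]$ tends to $1/2$ from above as $p\to\infty$, so the constant $1/2$ you propose to verify leaves essentially no room; the behaviour for highly composite $n$ needs a separate argument). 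What can be had cheaply is only the weaker bound $\phi_n(v)\ge\prod_\zeta|\Im\zeta|=2^{-\varphi(n)}|\phi_n(1)\phi_n(-1)|\ge 2^{-\varphi(n)}$, which gives the constant $2$ in place of $2/\sqrt3$ and does not suffice for the stated second inequality with $2/\log 3$. Until the uniform estimate is actually established (or the result is cited from [FLW]), the proof is incomplete at its central point.
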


Nous en d\' eduisons le

\begin{cor}\label{Corollaire:FLW}
Pour $d\ge 2$ et $N\ge 1$, on a la majoration
$$
\calA_d(N) \le 
29N^{\frac{2}{d}}(\log N)^{1.161}.
$$
\end{cor}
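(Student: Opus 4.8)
The plan is to bound $\calA_d(N)$ by the number of representations, organised according to the index $n$. Every integer $m$ counted by $\calA_d(N)$ is of the form $\Phi_n(x,y)$ with $\varphi(n)\ge d$ and $\max(|x|,|y|)\ge 2$, so
$$
\calA_d(N)\le \sum_{\substack{n\ge 3\\ \varphi(n)\ge d}} \#\bigl\{(x,y)\in\mathbb{Z}^2 : \max(|x|,|y|)\ge 2,\ 1\le \Phi_n(x,y)\le N\bigr\}.
$$
The key is that Th\'eor\`eme \ref{Theorem:FLW} controls simultaneously the range of $(x,y)$ and the range of admissible $n$. Indeed, as soon as the inner set is nonempty one has $\varphi(n)\le \frac{2}{\log 3}\log N$, and every such pair satisfies $\max(|x|,|y|)\le \frac{2}{\sqrt 3}N^{1/\varphi(n)}\le \frac{2}{\sqrt 3}N^{1/d}$, since $\varphi(n)\ge d$ and $N\ge 1$. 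In particular, if $d>\frac{2}{\log 3}\log N$ the sum is empty and there is nothing to prove, so we may assume $d\le \frac{2}{\log 3}\log N$.

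First I would bound the inner cardinal by a lattice-point count in a square: it is at most $\bigl(2\lfloor \tfrac{2}{\sqrt 3}N^{1/d}\rfloor +1\bigr)^2\le \bigl(\tfrac{4}{\sqrt 3}N^{1/d}+1\bigr)^2$. Since every admissible $n$ satisfies $\varphi(n)\le \frac{2}{\log 3}\log N$, this gives
$$
\calA_d(N)\le \Bigl(\tfrac{4}{\sqrt 3}N^{1/d}+1\Bigr)^2\, K,\qquad K:=\#\Bigl\{n\ge 3:\ d\le \varphi(n)\le \tfrac{2}{\log 3}\log N\Bigr\}.
$$
Using $N^{1/d}\ge N^{\log 3/(2\log N)}=\sqrt 3$ (a consequence of $d\le \frac{2}{\log 3}\log N$), the square of the first factor is $\frac{16}{3}N^{2/d}$ up to a multiplicative constant arbitrarily close to $1$. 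Thus the whole problem reduces to an explicit upper bound for $K$, that is, for the number of integers whose totient is at most $x=\frac{2}{\log 3}\log N$. I note that if instead one kept the exact weights $N^{2/\varphi(n)}$ and summed by Abel summation against an estimate for $\#\{n:\varphi(n)\le y\}$, one would obtain a sharper bound of shape $\ll_d N^{2/d}$; the crude replacement $N^{1/\varphi(n)}\le N^{1/d}$ is precisely what trades this for a bound \emph{uniform} in $d$, at the cost of the logarithmic factor.

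The remaining, and main, step is therefore to make the totient-preimage count effective: one must show $K\ll (\log N)^{1.161}$ with a fully explicit constant, and this is where both the exponent $1.161$ and the constant $29$ are produced. I would invoke an effective form of the classical fact $\#\{n:\varphi(n)\le x\}\asymp x$, for instance combining the Rosser--Schoenfeld lower bound $\varphi(n)\gg n/\log\log n$ with a dyadic count to get $K\ll \log N\,(\log\log N)^{O(1)}$, and then verify that over the relevant range this is $\le c(\log N)^{1.161}$ for a small explicit $c$. Substituting this and absorbing the constants $\tfrac{16}{3}$, $\tfrac{2}{\log 3}$ and the lattice-point adjustment into the single constant $29$ completes the argument. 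The hard part will be purely quantitative: controlling $K$ with a constant small enough, and then checking that the product of all explicit constants stays below $29$ uniformly for $N\ge 2$ and all $d\ge 2$, including the small-$N$ boundary where the inequality $\tfrac{4}{\sqrt 3}N^{1/d}+1\le (1+\varepsilon)\tfrac{4}{\sqrt 3}N^{1/d}$ is least favourable.
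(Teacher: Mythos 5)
Your decomposition is exactly the paper's: bound $\calA_d(N)$ by the number of triples $(n,x,y)$, use Th\'eor\`eme \ref{Theorem:FLW} both to confine $(x,y)$ to the square $\max(|x|,|y|)\le \frac{2}{\sqrt 3}N^{1/d}$ and to force $\varphi(n)\le \frac{2}{\log 3}\log N$, and then multiply a lattice-point count by the number of admissible $n$. The one step you defer --- making the count of admissible $n$ effective, which is where $1.161$ and $29$ come from --- is closed in the paper not by estimating the counting function $\#\{n:\varphi(n)\le x\}$ but by the cruder remark that every admissible $n$ satisfies $n\le 5.383\,(\log N)^{1.161}$ (formula (1.1) of \cite{FLW}, itself an effective version of the Rosser--Schoenfeld bound $\varphi(n)\gg n/\log\log n$ with the $\log\log$ absorbed into the exponent $0.161$), so that the number of such $n$ is at most $5.383\,(\log N)^{1.161}$ and the constant is $\frac{16}{3}\times 5.383<29$. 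So the ``hard part'' you identify is in fact a one-line citation, and your alternative route through $\#\{n:\varphi(n)\le x\}\asymp x$ would work but is more than is needed. One small bookkeeping caveat: if you keep the honest lattice count $\bigl(2\lfloor \frac{2}{\sqrt 3}N^{1/d}\rfloor+1\bigr)^2$ with the $+1$, the resulting constant exceeds $29$ unless you also use the exclusion $\max(|x|,|y|)\ge 2$ (which removes the nine central points) or the paper's tacit replacement of the count by $\frac{16}{3}N^{2/d}$; this is exactly the boundary issue you flag, and it does need to be resolved to land on $29$.
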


\begin{proof}[D\' emonstration du corollaire \ref{Corollaire:FLW}]
D'apr\`es le th\' eor\`eme \ref{Theorem:FLW}, les conditions $\varphi(n)\ge d$ et $\Phi_n(x,y)\le N$ impliquent $\max\{|x|,|y|\} \le \frac{2}{\sqrt{3}}N^{\frac{1}{d}}$. Notons que $\calA_d(N) =0$ pour $N=1$ et $N=2$. 
La condition $\max\{|x|,|y|\}\ge 2$ permet d'obtenir 
$\varphi(n) \le \frac{2}{\log 3} \log N$, donc $n\le 5.383 (\log N)^{1.161}$ (formule (1.1) de \cite{FLW}). Il en r\'esulte que le nombre de triplets $(n,x,y)$ tels que $\varphi(n)\ge d$ et $\Phi_n(x,y)\le N$ est major\'e par 
$$
\frac{16}{3} 5.383 N^{\frac 2 d}(\log N)^{1.161}.
$$
\end{proof}

\section{D\' emonstration des th\' eor\`emes \ref{thmcentral} et \ref{thmcentral2} }\label{S:DemThmPpal}

Pour $n$ entier avec $\varphi (n) \geq 4$ et $N \geq 1,$
on d\' esigne par $\mathcal B_n (N)$ l'ensemble
\begin{equation}\label{defcalB}
\mathcal B_n (N):=\bigl\{ m\leq N \mid m = \Phi_n (a,b)\text{ avec } \max (\vert a \vert , \vert b \vert )\geq 2\bigr\}.
\end{equation}
Par le th\' eor\`eme \ref{Theorem:FLW}, on a l'implication
$$
\mathcal B_n (N) \not= \emptyset \Rightarrow \varphi (n) \ll \log N,
$$
soit encore
$$
\mathcal B_n (N) \not= \emptyset \Rightarrow n \ll \log N \log \log \log N,
$$
uniform\' ement pour $N>10$. Ainsi, par la d\' efinition de $\mathcal A_d (N)$ et par la restriction \eqref{nnequiv}, nous avons l'\' egalit\' e
\begin{equation}\label{reunion}
\mathcal A_d (N)= \Bigl\vert \bigcup_{\substack{n\not\equiv 2 \bmod 4 \\ \varphi (n) \geq d} } \mathcal B_n (N) \Bigr\vert,
\end{equation}
o\`u cette r\' eunion porte sur un nombre fini de $n$.

Le terme principal dans l'estimation du cardinal de $\mathcal A_d(N)$ sera
$$
 \sum_{\substack{ n\not\equiv 2 \bmod 4\\ \varphi (n) =d} }\bigl\vert \mathcal B _n (N)\bigr\vert .
 $$
L'\' egalit\' e $\bigl\vert \mathcal B_n (N)\bigr\vert = R_{\Phi_n} (N)$ permet d'appliquer le th\' eor\`eme \ref{thm21} \`a chacun des termes de cette somme:
$$
\bigl\vert \mathcal B _n (N)\bigr\vert=
 A_{\Phi_n} \, W_{\Phi_n} N^\frac 2d + O_{\Phi_n,\varepsilon} (N^{\beta_d ^*+\varepsilon}). 
 $$
Gr\^ace au corollaire \ref{corollaire:wPhin}, on a $W_{\Phi_n}=w_n$. On obtient
\begin{equation}\label{Eq:TermePrincipal}
 \sum_{\substack{ n\not\equiv 2 \bmod 4\\ \varphi (n) =d} }\bigl\vert \mathcal B _n (N)\bigr\vert 
 =
 C_d N^\frac 2d +O (N^{\beta_d^* + \varepsilon})
\end{equation}
avec la valeur de $C_d$ annonc\'ee dans la formule \eqref{defCd}. 

\subsection{Minoration de $\mathcal A_d (N)$}
En restreignant le nombre de termes dans l'\' egalit\'e \eqref{reunion}, on a la minoration
\begin{align} 
\mathcal A_d (N)& \geq \Bigl\vert \bigcup_{\substack{n\not\equiv 2 \bmod 4 \\ \varphi (n) =d} } \mathcal B_n (N) \Bigr\vert \nonumber\\
&\geq \sum_{\substack{ n\not\equiv 2 \bmod 4\\ \varphi (n) =d} }\bigl\vert \mathcal B _n (N)\bigr\vert -\underset {\substack{n_1< n_2\\\varphi (n_1) =\varphi (n_2) =d\\ n_1,\ n_2 \not\equiv 2 \bmod 4} } {\sum \sum} \Bigl\vert \mathcal B_{n_1} (N) \cap \mathcal B_{n_2} (N) \Bigr\vert. \label{Ad>}
\end{align}
La premi\`ere partie du membre de droite de \eqref{Ad>} est trait\'ee dans \eqref{Eq:TermePrincipal}.

 Pour la seconde partie, on \' ecrit
l'\' egalit\' e $\bigl\vert \mathcal B_{n_1} (N) \cap \mathcal B_{n_2} (N) \bigr\vert =R_{\Phi_{n_1}, \Phi_{n_2}} (N)$. Par la proposition \ref{prop:formescyclotomiquesisomorphes} les formes $\Phi_{n_1}$ et $\Phi_{n_2}$ ne sont pas isomorphes.
Le corollaire \ref{F1=F2} donne ainsi la majoration 
$$
\bigl\vert \mathcal B_{n_1} (N) \cap \mathcal B_{n_2} (N) \bigr\vert = O (N^{\eta_d +\varepsilon}).
$$
En conclusion, nous avons prouv\' e la minoration suivante de $\mathcal A_d (N) $: 
\begin{equation*} 
\mathcal A_d (N) \geq C_d N^\frac 2d - O (N^{\beta_d^* + \varepsilon}) - O (N^{\eta_d+\varepsilon}),
\end{equation*}
qui se simplifie en
\begin{equation}\label{minorationAd}
\mathcal A_d (N) \geq C_d N^\frac 2d - O (N^{\eta_d+\varepsilon}),
\end{equation}
puisque, d'apr\`es \eqref{defeta} et \eqref{defbeta*}, on a pour tout $d\geq 4$ pair, l'in\' egalit\' e 
\begin{equation}\label{etad>beta*}
\eta_d \geq \beta^*_d.
\end{equation} 
\subsection{Majoration de $\mathcal A_d (N)$}
On \' ecrit maintenant
$$
\mathcal A_d (N)\leq \Bigl\vert \bigcup_{\substack{n\not\equiv 2 \bmod 4 \\ \varphi (n)= d} } \mathcal B_n (N) \Bigr\vert +\mathcal A_{d^\dag} (N),
$$
dont on d\'eduit la majoration 
\begin{equation}\label{Ad<}
\mathcal A_d (N)\leq \sum_{\substack{n\not\equiv 2 \bmod 4 \\ \varphi (n)= d} } \bigl\vert \mathcal B_n (N) \bigr\vert 
+\mathcal A_{d^\dag} (N).
\end{equation}
Le premier terme de cette majoration a d\' eja \' et\' e trait\' e en \eqref{Eq:TermePrincipal}. Pour majorer $\mathcal A_{d^\dag} (N)$ on utilise le corollaire \ref{Corollaire:FLW} avec $d$ remplac\'e par $d^\dag$. Revenant en \eqref{Ad<}, on a donc prouv\'e la majoration 
\begin{equation}\label{premierpas}
\mathcal A_d (N)\leq C_d N^\frac 2d + O (N^{\beta_d^* +\varepsilon}) + O\bigl(N^\frac{2}{d^\dag}( \log N)^{1.\, 161}\bigr).
\end{equation}
Cette formule appliqu\' ee en rempla\c cant $d$ par $d^\dag$ donne la majoration 
$$
\mathcal A_{d^\dag} (N)\ll N^\frac{2}{d^\dag},
$$
o\`u il n'y a plus de puissance de $\log N$ parasite. Reportant cette derni\`ere majoration dans \eqref{Ad<}, l'in\' egalit\' e \eqref{premierpas} est am\' elior\'ee en 
\begin{equation}\label{deuxiemepas}
\mathcal A_d (N)\leq C_d N^\frac 2d + O (N^{\beta_d^* +\varepsilon}) + O(N^\frac{2}{d^\dag} ).
\end{equation}
En combinant \eqref{minorationAd}, \eqref{deuxiemepas} et l'in\'egalit\'e \eqref{etad>beta*},  on termine la preuve de \eqref{central}. Les preuves des th\' eor\`emes \ref{thmcentral} et \ref{thmcentral2} sont compl\`etes.

\section{Preuve du th\' eor\`eme \ \ref{Thm:ovalecyclotomique}}

\subsection{R\'egion fondamentale d'une forme binaire d\'efinie positive}\label{SS:regionfondamentale}

Quand $F$ est une forme binaire d\'efinie positive de degr\'e $d$, on d\'esigne par $\calO(F)$ l'ensemble des $(x,y)\in\R^2$ tels que $F(x,y)\le 1$. 
C'est un compact du plan euclidien rapport\'e au rep\`ere orthonorm\'e $(O, \vec i, \vec j)$. Il est d\'elimit\'e par une courbe alg\'ebrique de degr\'e $d$ et de classe $C^\infty$, d'\'equation $F(x,y)=1$. 
Rappelons (\S~\ref{section:introd}) que $A_F$ d\'esigne l'aire de $\calO(F)$. Le changement de variable $(x,y)\to (t,y)$ avec $x=ty$ donne
 $$
A_F=
 \iint_{F(x,y)\le 1} \rmd x \rmd y
=
 \int_{-\infty}^{+\infty} \frac { \rmd t}{F(t,1)^{2/d}}\cdot
 $$
 Quand $F$ a ses coefficients alg\'ebriques, ce nombre est une p\'eriode au sens de Kontsevich -- Zagier. L'article \cite{Bean} est consacr\'e au calcul de $A_F$. 
 
On d\'esigne par $\rmL(F)$ la longueur du polyn\^ome $F(X,1)$ et par $\rmm(F)$ le minimum de la fonction $F(t,1)$ sur $\R$. Comme $F$ est homog\`ene de degr\'e $d$, on a, pour tout $(x,y)\in\R^2$, 
$$
\rmm(F)\max\{|x|,|y|\}^d \le F(x,y)\le \rmL(F)\max\{|x|,|y|\}^d.
$$ 
Il en r\'esulte que $\calO(F)$ contient le carr\'e centr\'e en $O$ de c\^ot\'e $\rmL(F)^{-1/d}$, \`a savoir
$$
\{(x,y)\in\R^2\; \mid\; \max\{|x|,|y|\}\le \rmL(F)^{-1/d}\},
$$ 
et qu'il est contenu dans le carr\'e centr\'e en $O$ de c\^ot\'e $\rmm(F)^{-1/d}$:
$$
\{(x,y)\in\R^2\; \mid\; \max\{|x|,|y|\}\le \rmm(F)^{-1/d}\}.
$$ 
Par cons\'equent,
 $$
 4 \rmL(F)^{-2/d} \le A_F\le 4 \rmm(F)^{-2/d}.
 $$

\subsection{Le domaine fondamental cyclotomique $\calO_n$ pour $n\ge 3$}
 Pour $n\ge 3$, 
 $\calO_n=\calO(\Phi_n)$ est la r\'egion fondamentale de la forme cyclotomique $\Phi_n$ et son aire est $A_{\Phi_n}$. 
 
 Pour $n\ge 3$, $\calO_n$ est sym\'etrique par rapport \`a la premi\`ere bissectrice et sym\'etrique par rapport au point $O$. De plus, si $n$ est divisible par $4$, $\calO_n$ est sym\'etrique par rapport aux axes de coordonn\'ees. Si $n$ est impair, $\calO_{2n}$ s'obtient \`a partir de $\calO_n$ par sym\'etrie par rapport \`a un des axes de coordonn\'ees. 
 
 Pour $n=4$, $\calO_4$ est le disque $x^2+y^2\le 1$ et
$$
A_{\Phi_4}= \int_{-\infty}^{+\infty} \frac { \rmd t}{1+t^2}
 =\pi.
 $$
Quand $p$ est un nombre premier impair on a 
$$
A_{\Phi_p}= \int_{-\infty}^{+\infty} \frac { \rmd t}{ (1+t+t^2+\cdots+t^{p-1})^{(p-1)/2}}\cdot 
 $$
Par exemple $\calO_3$ est l'int\'erieur de l'ellipse $x^2+xy+y^2=1$ et
 $$
 A_{\Phi_3}= \int_{-\infty}^{+\infty} \frac { \rmd t}{1+t+t^2}
 =\frac {2\pi}{\sqrt 3}\cdotp
$$

 \subsection{D\'emontration du th\'eor\`eme \ref{Thm:ovalecyclotomique}}

La d\'emonstration du th\'eor\`eme \ref{Thm:ovalecyclotomique} repose sur des estimations de $\Phi_n(t)$: majorations et minorations. L'estimation de $\rmm(\phi_n)$ donn\'ee dans \cite{FLW} ne suffit pas pour d\'emontrer le th\'eor\`eme \ref{Thm:ovalecyclotomique}. 

Montrer que $\calO_n$ contient le petit carr\'e revient \`a d\'emontrer, pour $n$ suffisamment grand, 
\begin{equation}\label{equation:contientlepetitcarre}
\Phi_n(x,y)<1
 \quad\hbox{ quand } \quad
\max\{|x|,|y|\}<1-n^{-1+\varepsilon},
\end{equation}
alors que montrer que $\calO_n$ est contenu dans le grand carr\'e revient \`a d\'emontrer, pour $n$ suffisamment grand, 
\begin{equation}\label{equation:contenudanslegrandcarre}
\Phi_n(x,y)>1
 \quad\hbox{ quand } \quad
\max\{|x|,|y|\}>1+n^{-1+\varepsilon}.
\end{equation}
Les relations $\Phi_n(x,y)=\Phi_n(y,x)=\Phi_n(-x,-y)$ (pour $n\ge 3$) permettent de se limiter au domaine $|y|\le x$.

Si $P\in\R[X]$ est un polyn\^ome de degr\'e $d$, on a 
$$
|P(t)|\le \rmL(P) \max\{1,|t|\}^d
$$
pour tout $t\in\R$. En particulier 
$$
\phi_n(t)\le \rmL(\phi_n) \max\{1,|t|\}^{\varphi(n)}
$$
pour tout $t\in\R$. 

De \eqref{equation:Bateman} on d\'eduit, pour tout $\varepsilon>0$, pour $n$ suffisamment grand et pour tout $t\in\R$, l'in\' egalit\' e
\begin{equation}\label{equation:majorationPhin}
\phi_n(t)\le e^{n^\varepsilon} \max\{1,|t|\}^{\varphi(n)}.
\end{equation}
Montrons que cela implique \eqref{equation:contientlepetitcarre}. 
Soit $n$ suffisamment grand et soit $(x,y)$ satisfaisant 
$$
0<|y|\le x<1-n^{-1+\varepsilon}.
$$
Posons $t=x/y$. On a $|t|\ge 1$ et, en utilisant \eqref{equation:majorationPhin} avec $\varepsilon/3$, 
$$
\Phi_n(x,y)=y^{\varphi(n)}\phi_n(t)\le y^{\varphi(n)}e^{n^{\varepsilon/3}} t^{\varphi(n)}
=x^{\varphi(n)} e^{n^{\varepsilon/3}} < \bigl(1-n^{-1+\varepsilon}\bigr)^{\varphi(n)} e^{n^{\varepsilon/3}}.
$$
Pour $n$ suffisamment grand on a 
$$
\varphi(n)>n^{1-\varepsilon/3}, \qquad
\log x\le \log \bigl(1-n^{-1+\varepsilon}\bigr)< - n^{-1+\varepsilon},
$$
d'o\`u
$$
\varphi(n)\log x< - n^{2\varepsilon/3},
$$
c'est-\`a-dire
$$
x^{\varphi(n)} < e^{-n^{2\varepsilon/3}}.
$$
Ceci compl\`ete la d\'emonstration de \eqref{equation:contientlepetitcarre}.

Pour d\'emontrer \eqref{equation:contenudanslegrandcarre}, on doit minorer $\phi_n(t)$. 
Nous utiliserons les estimations donn\'ees par les deux lemmes suivants; la premi\`ere nous sera utile quand $|t|-1$ n'est pas trop petit, la suivante quand $|t|-1$ est positif et petit.

\begin{lem}\label{lemme:minoratiodephin}
Pour tout $t\in\R$ et pour tout $n\geq 3$, on a 
$$
\phi_n(t)\ge |t|^{\varphi(n)-1}(|t|-1)
\prod_{\atop{d\mid n}{d\neq n}} \rmL(\phi_d)^{-1}.
$$
\end{lem}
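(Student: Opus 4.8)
The plan is to start from the factorization $t^n-1=\prod_{d\mid n}\phi_d(t)$, which is the single-variable specialization (put $X=t$, $Y=1$) of the recurrence recalled in the introduction, and to isolate $\phi_n(t)$ as the ratio of $t^n-1$ to the product of the remaining factors. First I would dispose of the easy case $|t|\le 1$: since $\phi_n$ has no real zero for $n\ge 3$, one has $\phi_n(t)>0$, whereas the right-hand side of the asserted inequality is $\le 0$ because of the factor $(|t|-1)$; so the inequality is trivial there. It remains to treat $|t|>1$.

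For $|t|>1$ none of the factors $\phi_d(t)$ with $d\mid n$ vanishes (for $d\ge 3$ this is again the absence of real zeros, while $\phi_1(t)=t-1$ and $\phi_2(t)=t+1$ are nonzero since $t\neq\pm 1$). Taking absolute values in the factorization and using $\phi_n(t)=|\phi_n(t)|$ gives
\[
\phi_n(t)=\frac{|t^n-1|}{\prod_{\substack{d\mid n\\ d\neq n}}|\phi_d(t)|}.
\]
The strategy is then to bound the numerator from below and the denominator from above. For the numerator, the reverse triangle inequality yields $|t^n-1|\ge |t|^n-1$ since $|t|>1$. For the denominator I would invoke the elementary majoration $|P(t)|\le \rmL(P)\max\{1,|t|\}^{\deg P}$ recalled just before the lemma, applied to each $\phi_d$: for $|t|\ge 1$ it reads $|\phi_d(t)|\le \rmL(\phi_d)\,|t|^{\varphi(d)}$.

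Multiplying over the proper divisors and using the classical identity $\sum_{d\mid n}\varphi(d)=n$, hence $\sum_{d\mid n,\,d\neq n}\varphi(d)=n-\varphi(n)$, yields
\[
\prod_{\substack{d\mid n\\ d\neq n}}|\phi_d(t)|\le |t|^{\,n-\varphi(n)}\prod_{\substack{d\mid n\\ d\neq n}}\rmL(\phi_d).
\]
Combining the two estimates gives
\[
\phi_n(t)\ge\frac{|t|^n-1}{|t|^{\,n-\varphi(n)}}\prod_{\substack{d\mid n\\ d\neq n}}\rmL(\phi_d)^{-1},
\]
so it only remains to check the purely elementary inequality $|t|^n-1\ge |t|^{n-1}(|t|-1)$, equivalent to $|t|^{n-1}\ge 1$, which is true for $|t|>1$ and $n\ge 1$. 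Dividing through by $|t|^{\,n-\varphi(n)}$ converts $|t|^{n-1}(|t|-1)$ into $|t|^{\varphi(n)-1}(|t|-1)$ and produces exactly the claimed bound.

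I do not expect a genuine obstacle: the content is the factorization together with the length bound and Gauss's identity $\sum_{d\mid n}\varphi(d)=n$. The only point requiring care is the sign bookkeeping when $t<-1$, where $t^n-1$ and the individual $\phi_d(t)$ may be negative; this is precisely what working with absolute values throughout, combined with the positivity $\phi_n(t)>0$ valid for $n\ge 3$, resolves cleanly.
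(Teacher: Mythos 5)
Your proposal is correct and follows essentially the same route as the paper: the factorization $t^n-1=\prod_{d\mid n}\phi_d(t)$, the length bound $|\phi_d(t)|\le \rmL(\phi_d)|t|^{\varphi(d)}$, the identity $\sum_{d\mid n}\varphi(d)=n$, and the elementary minoration $|t|^n-1\ge |t|^{n-1}(|t|-1)$. The only difference is cosmetic: by taking absolute values throughout and using $\phi_n(t)=|\phi_n(t)|$, you avoid the explicit three-way case analysis ($t>1$; $t<-1$ with $n$ even; $t<-1$ with $n$ odd) that the paper carries out to track the signs of $t^n-1$ and of the factors $\phi_1(t)$, $\phi_2(t)$.
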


\begin{proof}
Le r\'esultat est trivial si $|t|\le 1$. 
Pour commencer prenons $t>1$. De
\begin{equation}\label{equation:tn-1}
t^n-1=\phi_n(t)\prod_{\atop{d\mid n}{d\neq n}} \phi_d(t)
\end{equation}
on d\'eduit 
\begin{equation}\label{equation:majorationtn-1}
t^n-1\le \phi_n(t)\prod_{\atop{d\mid n}{d\neq n}}( \rmL(\phi_d) t^{\varphi(d)})=
\phi_n(t)t^{n-\varphi(n)}
\prod_{\atop{d\mid n}{d\neq n}} \rmL(\phi_d).
\end{equation}
 On minore $t^n-1$  par $ (t-1)t^{n-1}$.
Par cons\'equent, 
 $$
t^{\varphi(n)-1}(t-1)\le \phi_n(t)
\prod_{\atop{d\mid n}{d\neq n}} \rmL(\phi_d),
$$
ce qui est la conclusion du lemme \ref{lemme:minoratiodephin} pour $t>1$.

Supposons $t<-1$ et $n$ pair. Dans ce cas, $t^n-1=|t|^n-1$; dans le produit \eqref{equation:tn-1} il y a deux facteurs n\'egatifs, \`a savoir $\phi_1(t)=t-1$ et $\phi_2(t)=t+1$, et on remplace \eqref{equation:majorationtn-1} par
$$
|t|^n-1\le 
\phi_n(t)|t|^{n-\varphi(n)}
\prod_{\atop{d\mid n}{d\neq n}} \rmL(\phi_d).
$$
On conclut avec
$$
 |t|^n-1\ge (|t|-1)|t|^{n-1}.
 $$
Enfin pour $t<-1$ et $n$ impair, le membre de gauche de \eqref{equation:tn-1} est $-|t|^n-1$, et dans le membre de droite le seul facteur n\'egatif est celui correspondant \`a $d=1$, \`a savoir $\phi_1(t)=-|t|-1$. Ainsi
$$
|t|^n+1 =
 \phi_n(t)\prod_{\atop{d\mid n}{d\neq n}} |\phi_d(t)|
 \le 
\phi_n(t)|t|^{n-\varphi(n)}
\prod_{\atop{d\mid n}{d\neq n}} \rmL(\phi_d).
$$
Comme $|t|^n+1$ est minor\'e par $|t|^n$ on a une estimation plus pr\'ecise que celle du lemme \ref{lemme:minoratiodephin}, \`a savoir
 $$
\vert t \vert^{\varphi(n)} \le \phi_n(t)
\prod_{\atop{d\mid n}{d\neq n}} \rmL(\phi_d).
$$ 
\end{proof}
\begin{lem}\label{lemme:majoratiodephin-1}
Pour tout $t\in\R$ et pour tout $n\geq 1$, on a 
$$
|\phi_n(t)-\phi_n(1)|
\le |t-1| \max\{1,|t|\}^{\varphi(n)-1}\varphi(n)\rmL(\phi_n)
$$
et
$$ 
|\phi_n(t)-\phi_n(-1)|
\le |t+1| \max\{1,|t|\}^{\varphi(n)-1}\varphi(n)\rmL(\phi_n).
$$
\end{lem}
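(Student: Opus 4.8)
The plan is to work directly from the coefficients of $\phi_n$ and to use the elementary factorisation $t^k-a^k=(t-a)(t^{k-1}+t^{k-2}a+\cdots+a^{k-1})$ with $a=\pm1$, avoiding any appeal au th\'eor\`eme des accroissements finis. Write $\phi_n(t)=\sum_{k=0}^{\varphi(n)}c_k t^k$, so that $\rmL(\phi_n)=\sum_k|c_k|$ par d\'efinition.

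For the first inequality I would start from
$$
\phi_n(t)-\phi_n(1)=\sum_{k=0}^{\varphi(n)}c_k(t^k-1)=(t-1)\sum_{k=0}^{\varphi(n)}c_k\sum_{j=0}^{k-1}t^j,
$$
where the inner sum is empty (hence zero) for $k=0$. Taking absolute values and using the triangle inequality reduces everything to bounding $\sum_{j=0}^{k-1}|t|^j$. Since this sum has exactly $k$ terms and each satisfies $|t|^j\le\max\{1,|t|\}^{k-1}$ for $0\le j\le k-1$, it is at most $k\,\max\{1,|t|\}^{k-1}\le\varphi(n)\,\max\{1,|t|\}^{\varphi(n)-1}$, the last step using $k\le\varphi(n)$ and $\max\{1,|t|\}\ge 1$. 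Summing against $|c_k|$ and factoring out this common bound yields exactly $|t-1|\,\varphi(n)\,\max\{1,|t|\}^{\varphi(n)-1}\rmL(\phi_n)$.

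For the second inequality the same scheme applies with $a=-1$: the identity becomes $t^k-(-1)^k=(t+1)\sum_{j=0}^{k-1}(-1)^{k-1-j}t^j$, so that
$$
\phi_n(t)-\phi_n(-1)=(t+1)\sum_{k=0}^{\varphi(n)}c_k\sum_{j=0}^{k-1}(-1)^{k-1-j}t^j.
$$
The signs $(-1)^{k-1-j}$ disparaissent sous l'in\'egalit\'e triangulaire, so the bound on $\sum_{j=0}^{k-1}|t|^j$ computed above applies verbatim and produces $|t+1|\,\varphi(n)\,\max\{1,|t|\}^{\varphi(n)-1}\rmL(\phi_n)$.

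There is no genuine obstacle here; the only point requiring a little care is the bookkeeping that turns the factor $k$ (the number of terms in the truncated geometric sum) into the uniform factor $\varphi(n)=\deg\phi_n$, together with the observation that replacing $\max\{1,|t|\}^{k-1}$ by $\max\{1,|t|\}^{\varphi(n)-1}$ is harmless pr\'ecis\'ement parce que $\max\{1,|t|\}\ge1$. A variante analytique --- writing $\phi_n(t)-\phi_n(\pm1)=\int_{\pm1}^t\phi_n'(u)\,\rmd u$ and bounding $|\phi_n'(u)|\le\varphi(n)\rmL(\phi_n)\max\{1,|u|\}^{\varphi(n)-1}$ on the segment joignant $\pm1$ \`a $t$ --- gives the same estimate and could replace the algebraic computation if preferred.
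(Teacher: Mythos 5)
Votre démonstration est correcte et suit essentiellement la même voie que celle du texte : développement de $\phi_n$ en coefficients, factorisation de $t^k-(\pm1)^k$ par $t\mp 1$, majoration du quotient par $k\max\{1,|t|\}^{k-1}\le\varphi(n)\max\{1,|t|\}^{\varphi(n)-1}$, puis sommation contre les valeurs absolues des coefficients pour faire apparaître $\rmL(\phi_n)$. Même la variante analytique par la dérivée que vous évoquez en fin de preuve est mentionnée dans le texte comme alternative.
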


\begin{proof}
On pourrait faire intervenir la d\'eriv\'ee de $\phi_n$ dont la longueur est major\'ee par $\varphi(n)\rmL(\phi_n)$, mais on peut aussi faire un calcul direct comme ceci. 
\'Ecrivons
$$
\phi_n(t)=\sum_{j=0}^{\varphi(n)}a_jt^j.
$$
Alors $a_0+a_1+\cdots+a_{\varphi(n)}=\phi_n(1)$, $|a_0|+|a_1|+\cdots+|a_{\varphi(n)}|=\rmL(\phi_n)$ et 
$$
\phi_n(t)-\phi_n(1)=\sum_{j=1}^{\varphi(n)}a_j(t^j-1).
$$
On \'ecrit 
$$
\frac{t^j-1}{t-1}=\sum_{i=0}^{j-1} t^i
$$
et
$$
\frac{|t^j-1|}{|t-1|}\le j \max\{1,|t|\}^{j-1}\le \varphi(n) \max\{1,|t|\}^{\varphi(n)-1},
$$
ce qui donne 
$$
|\phi_n(t)-\phi_n(1)|\le |t-1| \max\{1,|t|\}^{\varphi(n)-1} \varphi(n)\sum_{j=1}^{\varphi(n)} |a_j|.
$$
La m\^eme d\'emonstration donne
$$
|\phi_n(t)-\phi_n(-1)|\le |t+1| \max\{1,|t|\}^{\varphi(n)-1} \varphi(n)\sum_{j=1}^{\varphi(n)} |a_j|.
$$
\end{proof}

\begin{proof}[D\'emonstration de \eqref{equation:contenudanslegrandcarre}]
Soit $n$ un entier suffisamment grand et soit $(x,y)\in\R^2$ v\'erifiant $0<|y|<x$ et
$$
x>1+n^{-1+\varepsilon}.
$$
On a 
$$
\log x> \frac 12 n^{-1+\varepsilon}
\quad\hbox{ et }\quad
\varphi(n)\log x>n^{2\varepsilon/3}.
$$
 On pose $t=x/y$, de sorte que $|t|>1$. On \'ecrit
$$
\Phi_n(x,y)=y^{\varphi(n)}\phi_n(t)
$$
et on minore $\phi_n(t)$ en consid\'erant deux cas.

\noindent
$\bullet$ \emph{Premier cas}. Supposons 
$$
|t|>1+e^{-n^{\varepsilon/2}}.
$$
Cette minoration implique 
$$
\frac{|t|-1} {|t|}>\frac 12 e^{-n^{\varepsilon/2}}.
$$
On utilise les lemmes \ref{lemme:minoratiodephin} et \eqref{equation:Bateman}, o\`u $\varepsilon$ est remplac\'e par $\varepsilon/2$, pour obtenir
$$
\phi_n(t)\ge |t|^{\varphi(n)-1}(|t|-1)
 e^{-n^{\varepsilon/2}}\ge
 \frac 12
 t^{\varphi(n)}
 e^{-2n^{\varepsilon/2}},
$$
d'o\`u
$$
\Phi_n(x,y)\ge \frac 12 x^{\varphi(n)}
 e^{-2n^{\varepsilon/2}}.
$$
On a 
$$
\varphi(n) \log x>n^{2\varepsilon/3}>2n^{\varepsilon/2}+\log 2,
$$
ce qui donne 
$\Phi_n(x,y)>1$
pour $n$ suffisamment grand.

\noindent
$\bullet$ \emph {Deuxi\`eme cas}. 
Supposons maintenant 
$$
1< |t|\le 1+e^{-n^{\varepsilon/2}}.
$$
On a 
$$
\frac{|t|-1} {|t|}\le |t|-1\le e^{-n^{\varepsilon/2}}
\text{ et } \log (|t|-1)-\log |t| \le -n^{\varepsilon/2}.
$$
On utilise les majorations
$$
\varphi(n)\log |t| <n(|t|-1)\le ne^{-n^{\varepsilon/2}}
$$
et
$$
\log (|t|-1) +(\varphi(n)-1)\log |t| + n^{\varepsilon/3}+\log 2
\le
-n^{\varepsilon/2}+ne^{-n^{\varepsilon/2}}+ n^{\varepsilon/3}+\log 2
<0
$$
pour $n$ suffisamment grand. 
Les lemmes \ref{lemme:majoratiodephin-1} et \eqref{equation:Bateman}, o\`u $\varepsilon$ est remplac\'e par $\varepsilon/3$, donnent
$$
|\phi_n(t)-\phi_n(1)| < \frac 12 \;\; \text {si } \;\; t>1, \quad |\phi_n(t)-\phi_n(-1)| < \frac 12 \;\; \text {si } \;\; t<-1, 
$$
ce qui implique $\phi_n(t)>1/2$. Alors
$$
\Phi_n(x,y) > \frac 12 y^{\varphi(n)} 
$$
avec $|y|=x/|t|$. On a 
$$
\varphi(n)\log |y|=\varphi(n)\log x-\varphi(n)\log |t|,
$$
$$
 \log |t|\le e^{-n^{\varepsilon/2}}, \quad \varphi(n)\log |t| \le n e^{-n^{\varepsilon/2}},
$$
d'o\`u
$$
\varphi(n)\log x-\varphi(n)\log |t|\ge n^{2\varepsilon/3} -n e^{-n^{\varepsilon/2}}>\log 2
$$
pour $n$ suffisamment grand,
ce qui implique $\Phi_n(x,y)>1$. 
\end{proof}

La preuve du th\'eor\`eme \ref{Thm:ovalecyclotomique} est compl\`ete.

\section{Preuve du th\' eor\`eme \ref{thmcentral3}}
On se place sous les hypoth\`eses de ce th\' eor\`eme. Soit $d\geq 4$ un totient tel que $d+2$ soit aussi un totient.
Soit $n_1 <n_2<\cdots <n_t$ la liste des entiers tels que 
$$
n_i \not\equiv 2 \bmod 4 \text{ et }\varphi (n_i) =d,$$
et un entier $m$ tel que $\varphi (m) =d+2$.
On part de la minoration 
$$
\mathcal A_d (N) \geq \Bigl\vert \mathcal B_{n_1} (N) \cup\cdots \cup \mathcal B_{n_t} (N) \cup \mathcal B_m (N)
\Bigr\vert
$$
o\`u on utilise la notation \eqref{defcalB}. Par le principe d'inclusion--exclusion on a la minoration 
\begin{equation}\label{Ad>BB}
\mathcal A_d (N) \geq \Bigl\vert \mathcal B_{n_1} (N) \cup\cdots \cup \mathcal B_{n_t} (N) \Bigr\vert + \Bigl\vert \mathcal C_{\boldsymbol n, m} (N) 
\Bigr\vert,
\end{equation}
o\`u $\mathcal C_{\boldsymbol n, m} (N) $ est l'ensemble compl\' ementaire  
$$
\mathcal C_{\boldsymbol n, m} (N) := \bigl\{ u\in \mathbb Z\ \mid \ u \in \mathcal B_m (N) \text{ et } u \not\in B_{n_1} (N) \cup\cdots \cup \mathcal B_{n_t} (N)
\bigr\}. 
$$
Le premier terme \`a droite de la minoration \eqref{Ad>BB} est minor\' e en combinant \eqref{Ad>} et \eqref{minorationAd}: 
$$
 \Bigl\vert \mathcal B_{n_1} (N) \cup\cdots \cup \mathcal B_{n_t} (N) \Bigr\vert 
 \ge 
 C_d N^\frac 2d - O (N^{\eta_d+\varepsilon}).
$$
Pour minorer le cardinal de $\mathcal C_{\boldsymbol n, m} (N)$, nous commen\c cons par exhiber un ensemble de couples d'entiers $(a,b)$ de densit\'e positive dont l'image par $\Phi_m$
appartient \`a $\mathcal C_{\boldsymbol n, m} (\infty)$. On a 
 \begin{lem}\label{confinementapplique} Soit $d$, $t$, $n_1$, \dots, $n_t$ et $m$ des entiers comme ci--dessus. Il existe alors un entier $D$ et des 
 classes de congruence $a_0$ et $b_0\bmod D$ tels que
 $$a \equiv a_0 \text{ et } b\equiv b_0\bmod D \Rightarrow \Phi_m (a, b) \not\in \Bigl( \mathcal B_{n_1} (\infty) \cup\cdots \cup \mathcal B_{n_t} (\infty)\Bigr).
 $$
 
 \end{lem}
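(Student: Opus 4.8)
Le plan est d'attacher \`a chaque $n_i$ une obstruction de congruence locale interdisant \`a certaines valeurs de $\Phi_m$ d'appartenir \`a $\mathcal B_{n_i}(\infty)$, puis de combiner ces obstructions par le th\'eor\`eme chinois. Pour chaque $i$ je cherche un module $q_i$, puissance d'un nombre premier divisant $n_i$ et pris parmi les entiers $p$ ($p\ge 5$ premier), $9$ et $4$ : d'apr\`es les propositions \ref{prop:confinmodp}, \ref{prop:confinmod9} et \ref{prop:confinmod4}, l'image $\Phi_{n_i}(\Z^2)$ \'evite respectivement toute classe $\not\equiv 0,1\bmod p$, toute classe $\not\equiv 0,1,3\bmod 9$ (quand $9\mid n_i$) et la classe $3\bmod 4$ (quand $4\mid n_i$). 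Il suffira d'imposer \`a $(a,b)$ des conditions modulo $q_i$ pla\c cant $\Phi_m(a,b)$ dans une telle classe interdite. On peut de plus supposer $m\not\equiv 2\bmod 4$ gr\^ace \`a \eqref{nnequiv}.

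Le cas g\'en\'erique est celui o\`u $n_i$ poss\`ede un facteur premier $p\ge 5$. L'observation d\'ecisive est que $p\mid n_i$ entra\^ine $(p-1)\mid\varphi(n_i)=d$ ; si l'on avait aussi $(p-1)\mid\varphi(m)=d+2$, on en d\'eduirait $(p-1)\mid 2$, donc $p\le 3$. Ainsi, pour $p\ge 5$, on a $(p-1)\nmid\varphi(m)$ (en particulier $p\nmid m$), de sorte que le sous-groupe des puissances $\varphi(m)$-i\`emes de $\F_p^\times$ n'est pas r\'eduit \`a $\{1\}$ : il existe $a$ avec $a^{\varphi(m)}\not\equiv 0,1\bmod p$. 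En imposant $b\equiv 0\bmod p$, et puisque le mon\^ome dominant de $\Phi_m$ est $X^{\varphi(m)}$, on obtient $\Phi_m(a,b)\equiv a^{\varphi(m)}\bmod p$, valeur interdite \`a $\Phi_{n_i}$ par la proposition \ref{prop:confinmodp}.

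Reste le cas, plus d\'elicat, o\`u les seuls facteurs premiers de $n_i$ sont $2$ et $3$, disons $n_i=2^\alpha 3^\beta$ ; c'est ici qu'intervient l'arithm\'etique fine de $\varphi(m)=d+2$. Si $\beta\ge 2$, alors $9\mid n_i$ et $3\mid d$, donc $3\nmid d+2$ ; l'application $x\mapsto x^{\varphi(m)}$ a alors pour image, sur $(\Z/9\Z)^\times$, le sous-groupe des carr\'es $\{1,4,7\}$, et en imposant $b\equiv 0\bmod 9$ et $a$ tel que $a^{\varphi(m)}\equiv 4\bmod 9$ on place $\Phi_m(a,b)$ dans la classe $4$, interdite par la proposition \ref{prop:confinmod9}. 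Si $\beta\le 1$, alors $d$ est une puissance de $2$ et $\varphi(m)=d+2\equiv 2\bmod 4$ ; la description des totients $\equiv 2\bmod 4$ force $m$ \`a \^etre une puissance de premier $m=p^k$ avec $p\equiv 3\bmod 4$, d'o\`u $\phi_m(1)=p\equiv 3\bmod 4$ par \eqref{phin(1)=}. En imposant $a\equiv b\equiv 1\bmod 4$ on a $\Phi_m(a,b)\equiv\phi_m(1)\equiv 3\bmod 4$, classe interdite \`a $\Phi_{n_i}$ par la proposition \ref{prop:confinmod4} (puisque $4\mid n_i$).

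Il ne reste qu'\`a rassembler ces contraintes. Les modules retenus ($4$, $9$ et les premiers $p\ge 5$) sont deux \`a deux premiers entre eux, et deux indices utilisant le m\^eme module y imposent la m\^eme condition sur $(a,b)$ ; l'ensemble des conditions est donc compatible. Le th\'eor\`eme chinois fournit un entier $D$ (produit des modules retenus) et des classes $a_0,b_0\bmod D$ telles que, pour tout $(a,b)\equiv(a_0,b_0)\bmod D$ et tout $i$, la valeur $\Phi_m(a,b)$ tombe dans une classe jamais atteinte par $\Phi_{n_i}(\Z^2)$ ; elle n'appartient donc \`a aucun $\mathcal B_{n_i}(\infty)$, ce qui est la conclusion cherch\'ee. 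Le point le plus d\'elicat est le traitement exhaustif du cas $n_i=2^\alpha 3^\beta$ : c'est lui qui rend indispensables les trois \'enonc\'es de confinement modulo $p$, $9$ et $4$, et qui repose sur l'analyse des $2$- et $3$-parties de $d+2$.
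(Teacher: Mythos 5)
Votre d\'emonstration suit pour l'essentiel la m\^eme strat\'egie que celle du texte : \`a chaque $n_i$ on attache une obstruction de congruence modulo un nombre premier $p\ge 5$ divisant $n_i$, ou modulo $9$, ou modulo $4$, puis on recolle les conditions par le th\'eor\`eme chinois. Le cas d'un facteur premier $p\ge 5$ est correct : votre variante, qui impose $b\equiv 0\bmod p$ et choisit $a$ tel que $a^{d+2}\not\equiv 0,1\bmod p$ en observant que $(p-1)\mid d$ et $(p-1)\nmid d+2$, est l\'egitime et \'equivaut au choix explicite $a\equiv 0$, $b\equiv 2\bmod p$ du texte (qui donne $2^{d+2}\equiv 4$). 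Le cas modulo $4$, avec la r\'eduction \`a $m=p^s$, $p\equiv 3\bmod 4$, et l'hypoth\`ese l\'egitime $m\not\equiv 2\bmod 4$, co\"incide \'egalement avec la preuve du texte.

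Il y a cependant un point \`a corriger dans votre d\'ecoupage des cas. Vous envoyez tous les $n_i=2^\alpha 3^\beta$ avec $\beta\ge 2$ vers l'obstruction modulo $9$, y compris ceux pour lesquels $\alpha\ge 2$ (par exemple $n_i=36$), et vous invoquez alors la proposition \ref{prop:confinmod9} ; or celle-ci n'est \'enonc\'ee et d\'emontr\'ee que pour les formes $\Phi_{3^k}$, et non pour tout $n$ divisible par $9$. La conclusion dont vous avez besoin est vraie : par \eqref{list4} on a $\Phi_{n_i}(X,Y)=\Phi_6\bigl(X^{n_i/6},Y^{n_i/6}\bigr)$ avec $3\mid n_i/6$ d\`es que $\beta\ge 2$, de sorte que les arguments sont des cubes modulo $9$, donc dans $\{0,\pm 1\}$, et $\Phi_6(u,v)=u^2-uv+v^2$ y prend ses valeurs dans $\{0,1,3\}$. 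Mais il faut soit r\'ediger cette extension de la proposition \ref{prop:confinmod9}, soit r\'eorganiser les cas comme le fait le texte, qui r\'eserve le module $9$ aux seuls $n_i=3^k$ et envoie tous les $n_i=2^h3^k$ avec $h\ge 2$ vers l'obstruction modulo $4$ : en effet $4\mid d$ dans ce cas, donc $d+2\equiv 2\bmod 4$ et l'argument modulo $4$ s'applique ind\'ependamment de $k$. Une fois ce point r\'egl\'e, le recollement par le th\'eor\`eme chinois est correct, les modules retenus \'etant deux \`a deux premiers entre eux.
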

 \begin{proof}[D\' emonstration du lemme \ref{confinementapplique}] \`A chaque entier $n_i$ on associe l'entier $\varpi (n_i)$ d\' efini comme suit:
 \vskip .2cm 
 \noindent $\bullet $ si $n_i$ n'est pas de la forme $2^h 3^k$, alors $\varpi (n_i) $ est le plus petit diviseur premier $\geq 5$ de $n_i$, 
 \vskip .2cm 
 \noindent $\bullet $ si $n_i$ est de la forme $2^h 3^k$ avec $h\geq 2$, alors $\varpi (n_i) =4,$ 
 \vskip .2cm
 \noindent $\bullet $ si $n_i$ est de la forme $ 3^k$ avec $k\geq 2$, alors $\varpi (n_i) =9$. 

On pose alors 
$$
D:={\mathrm {ppcm}}\{ \varpi (n_i)\}.
$$
Pour d\'efinir $a_0$ et $b_0\bmod D$, nous allons fixer leurs classes de congruence modulo chacun des $\varpi (n_i)$, avant d'appliquer le th\' eor\`eme chinois pour remonter en des classes modulo $D$ : 
\vskip .2cm
\noindent $\bullet$
Si $\varpi (n_i)$ est un nombre premier $ \geq 5$, alors $\varpi (n_i) $ divise $n_i$ et $\varphi (\varpi (n_i)) = \varpi (n_i)-1 $ divise $d$. On a alors pour $a\equiv 0 \bmod \varpi (n_i)$ et $b \equiv 2 \bmod \varpi (n_i)$ les congruences 
$$
\Phi_m (a, b) \equiv b^{d+2} \equiv 4 \not= 0, \, 1 \bmod \varpi (n_i);
$$
donc $\Phi_m (a, b)$ n'appartient pas \`a l'image de $\Phi_{n_i}$ d'apr\`es la proposition \ref{prop:confinmodp}. On fixe $a_0\equiv0$ et $b_0\equiv 2 \bmod \varpi (n_i)$.\vskip .2cm
\noindent $\bullet$ Si $\varpi (n_i) =4$, c'est que $n_i$ est de la forme $2^h 3^k$ avec $h\geq 2$. On remarque que $d$ est divisible par $4$ (rappelons que $\varphi(n_i)=d\ge 4$). Dans ce cas $d+2$ est congru \`a $2$ modulo $4$. Les seuls $m$ tels que $\varphi (m) =d+2$ et $m\not\equiv 2 \bmod 4$ sont de la forme 
$m=p^s$ avec $p\equiv 3 \bmod 4$ et $s\geq 1$. Par la formule \eqref{phin(1)=}, on a pour $a$ et $b$ congru \`a $1$ modulo $4$, $\Phi_m (a,b) \equiv \phi_m (1)
\equiv p \equiv 3 \bmod 4$, et $\Phi_m (a,b)$ n'est pas dans l'image de $\Phi_{n_i}$, d'apr\`es la proposition \ref{prop:confinmod4}. On fixe donc $a_0\equiv b_0 \equiv 1 \bmod 4$.
\vskip .2cm
\noindent $\bullet$ Si $\varpi(n_i) =9$, alors $n_i $ est de la forme $3^k$ avec $k\geq 2$. Par cons\'equent, $6 \mid \varphi (n_i) =d$.
 Soit $m$ tel que $\varphi (m) =d+2.$
Alors
$\Phi_m (0, b)= b^{d+2}.$ Donc si $3 \nmid b$, on a $\Phi_m(0,b) \equiv b^2 \bmod 9.$ Si on impose $a \equiv 0\bmod 9$ et $b \equiv 2 \bmod 9$, on voit que $\Phi_m (a,b) \equiv 4 \bmod 9$.
Ce n'est pas une valeur prise par $\Phi_{3^k}$, par la proposition \ref{prop:confinmod9}. On fixe donc $a_0\equiv 0\bmod 9$ et $b_0 \equiv 2 \bmod 9$.

Le lemme \ref{confinementapplique} en r\' esulte.
 \end{proof}
Soient $M \geq 2$ et $\mathcal E (M)$ l'ensemble des couples d'entiers $(a,b)$ tels que $\vert a \vert ,\, \vert b\vert \leq M$ et $a\equiv a_0$ et $b\equiv b_0 \bmod D$, avec les notations du lemme \ref{confinementapplique}. 
Il existe $c_0>0$ tel que 
\begin{equation}\label{inclusion}
\Phi_m (\mathcal E(c_0 N^\frac1{d+2}))\subset \mathcal C_{\boldsymbol n, m} (N).
\end{equation}

Notons $\tilde \rho (n)$ le nombre de solutions de l'\' equation $n=\Phi_m (a,b)$ avec $(a,b)\in \mathcal E(c_0 N^\frac1{d+2})$ . On a donc l'\' egalit\' e 
\begin{equation}\label{sumrho}
\sum_{n} \tilde \rho (n) = \vert \mathcal E(c_0 N^\frac1{d+2})\vert \sim (4c_0^2/D^2) N^\frac 2{d+2}.
\end{equation}
Pour appliquer l'in\' egalit\'e de Cauchy--Schwarz, on \' ecrit la partie gauche de l'\'equation \eqref{sumrho} comme
\begin{equation}\label{Cauchy}
\sum_{n} \tilde\rho (n) =\sum_{\tilde \rho (n)\geq 1} 1 \cdot \tilde \rho (n)\leq \vert \Phi_m (\mathcal E(c_0 N^\frac1{d+2}))\vert^\frac 12 \cdot 
\Bigl( \sum_n \rho^2 (n) \Bigr)^\frac 12,
\end{equation}
o\`u $\rho (n)$ est le nombre de solutions \`a l'\' equation $n =\Phi_m(a,b) $ avec $\vert a \vert,\, \vert b \vert \leq c_0 N^\frac 1{d+2}.$
D\' eveloppant le carr\' e, on voit que $\sum \rho^2 (n)$ est le nombre de points entiers de hauteur $\ll N^\frac 1{d+2}$ sur la surface de $\mathbb P^3 (\mathbb C)$ d\' efinie par
$$
\Phi_m (X_1, X_2) -\Phi_m (X_3,X_4) =0.
$$
Cette surface est lisse de degr\' e $\geq 3$. Elle contient donc $O(1)$ droites. Sur chacune de ces droites il y a $O( N^\frac 2{d+2})$ points de hauteur
 $\ll N^\frac 1{d+2}$. Pour compter les points entiers non situ\' es sur ces droites, on suit la m\^eme d\' emonstration que pour la
 proposition \ref{N1<<} (voir aussi \cite[Lemma 2.4]{SX}). Le nombre de ces points entiers est en $O(N^\vartheta)$ pour un certain $\vartheta <2/(d+2)$. Regroupant les deux contributions, on a donc la majoration
 $$
 \sum_n \rho^2 (n) \ll N^\frac 2{d+2}.
 $$
 Combinant \eqref{sumrho} et \eqref{Cauchy} on obtient la minoration
 $$
 \vert \Phi_m (\mathcal E(c_0 N^\frac1{d+2}))\vert \gg N^\frac 2{d+2}.
 $$
 Retournant \`a \eqref{inclusion} puis \`a \eqref{Ad>BB} on compl\`ete la preuve du th\' eor\`eme \ref{thmcentral3}. 
  
\vfill

\end{document}